\documentclass[12pt, a4paper, reqno]{amsart}
\usepackage{amscd, amssymb}
\usepackage{amsthm}
\usepackage{tikz}
\usetikzlibrary{arrows,positioning} 
\usepackage{mathrsfs}
\usepackage{bbm}
\usepackage{comment}
\usepackage{ stmaryrd }

\usepackage{multirow}
\usepackage{makecell}

\usepackage{hyperref}
\hypersetup{
	colorlinks=true,
	linkcolor=blue,
	filecolor=magenta,  
	urlcolor=cyan,
}

\usepackage{hyperref}
\usepackage[capitalise]{cleveref}

\def\Aut{\operatorname{Aut}}

\def\ker{\operatorname{ker}}

\def\dim{\operatorname{dim}}
\def\ad{\operatorname{ad}}

\def\id{\operatorname{id}}
\def\Dist{\operatorname{Dist}}

\def\ber{\operatorname{ber}}
\def\Ind{\operatorname{Ind}}

\def\Ad{\operatorname{Ad}}

\def\Hom{\operatorname{Hom}}

\def\gr{\operatorname{gr}}

\def\C{\mathbb{C}}
\def\Q{\mathbb{Q}}

\def\N{\mathbb{N}}
\def\Z{\mathbb{Z}}

\def\AA{\mathcal{A}}

\def\UU{\mathcal{U}}

\def\ZZ{\mathcal{Z}}

\def\a{\mathfrak{a}}
\def\b{\mathfrak{b}}
\def\c{\mathfrak{c}}
\def\m{\mathfrak{m}}
\def\n{\mathfrak{n}}
\def\p{\mathfrak{p}}
\def\q{\mathfrak{q}}
\def\g{\mathfrak{g}}
\def\h{\mathfrak{h}}
\def\r{\mathfrak{r}}
\def\k{\mathfrak{k}}
\def\l{\mathfrak{l}}
\def\s{\mathfrak{s}}
\def\o{\mathfrak{o}}
\def\u{\mathfrak{u}}

\newcommand\smallbullet{%
    \raisebox{-0.25ex}{\scalebox{1.2}{$\cdot$}}%
}

\def\ol{\overline}

\def\sub{\subseteq}

\def\xto{\xrightarrow}

\newtheorem{thm}{Theorem}[section]
\newtheorem{cor}[thm]{Corollary}

\newtheorem{lemma}[thm]{Lemma}
\newtheorem{prop}[thm]{Proposition}

\theoremstyle{definition}
\newtheorem{definition}[thm]{Definition}
\theoremstyle{remark}
\newtheorem{remark}[thm]{Remark}
\newtheorem{example}[thm]{Example}

\numberwithin{equation}{section}

\usepackage{relsize}
\usepackage{fancyhdr}
\usepackage[all,cmtip]{xy}

\usepackage{graphicx} 
\usepackage{booktabs} 

\usepackage{tikz}
\usetikzlibrary{positioning,calc,arrows.meta,shapes.geometric,fit,bending}
\usepackage{tikz-3dplot}
\usepackage{subfig}
\usepackage[all,cmtip]{xy}

\usetikzlibrary{decorations.pathmorphing,shapes}

\newif\ifexplanations
\explanationstrue 

\begin{document}
	\title{The Harish-Chandra isomorphism for supersymmetric spaces and ghost distributions}
	
	\author{Shifra Reif, Siddhartha Sahi, Vera Serganova and Alexander Sherman}

\begin{abstract}
We prove the Harish-Chandra isomorphism theorem for supersymmetric spaces, describing the polynomial algebra of eigenvalues of invariant differential operators.  The polynomials obtained satisfy novel invariance conditions, which remain somewhat mysterious.  We also prove the Harish-Chandra isomorphism for ghost distributions, which satisfy a `square root' of the invariance conditions coming from invariant differential operators.  All proofs are algebraic, and rely on a rank-one reduction argument and the Chevalley restriction theorem.

\end{abstract}
    
	\maketitle
	\pagestyle{plain}

\section{Introduction}

The classical Harish-Chandra theorem describes the center of the universal enveloping algebra in terms of ($\rho$-shifted) polynomials in the Cartan subalgebra which are symmetric under the Weyl group.  Harish-Chandra extended this isomorphism to the context of symmetric spaces, where the Cartan subalgebra is replaced by a Cartan subspace, and one uses the corresponding \emph{restricted} root system to obtain an appropriate Weyl group and Weyl vector \cite{H}.  This theorem has extensive applications in differential geometry, integrable systems, special functions, and representation theory.

The Harish-Chandra theorem for (Kac-Moody) Lie superalgebras describing the center of the universal enveloping algebra was given by Sergeev  \cite{Sergeev} and Gorelik-Kac \cite{G2}.  They proved that the center is isomorphic to ($\rho$-shifted) Weyl group invariant polynomials in the Cartan subalgebra satisfying a special `odd invariance' condition $f(\lambda+\alpha)=f(\lambda)$ for $(\lambda+\rho,\alpha)=0$ for all isotropic roots $\alpha$.  Mathematicians  have grappled since with the correct formulation of this theorem, introducing (various incarnations of) the Weyl groupoid, and viewing these polynomials as the Weyl groupoid invariant polynomials in the Cartan subalgebra \cite{SV,SV2}.

\subsection{Invariant differential operators} We extend the work of Sergeev and Gorelik-Kac to supersymmetric spaces for Kac-Moody Lie superalgebras. 
Let $\g$ be a finite-dimensional, complex, indecomposable Kac-Moody Lie superalgebra (for example $\g=\s\l(m|n)$ for $m\neq n$, $\g\l(n|n)$, or $\o\s\p(m|2n))$.  Let $\theta$ be an involution of $\g$ preserving an invariant form on $\g$.
Write $\g=\k\oplus\p$, where $\k$ is the fixed subalgebra and $\p$ is the $(-1)$-eigenspace of $\theta$.  Let $\ZZ_{(\g,\k)}:=(\UU\g/(\UU\g)\k)^{\k}$, where $\k$ acts via left multiplication (or equivalently by the adjoint action). This is an algebra which agrees with the algebra of invariant differential operators on some global supersymmetric space associated to $(\g,\k)$.  

Assume that $(\g,\k)$ admits an Iwasawa decomposition, $\g=\k\oplus \a\oplus\n$.  
Since $(\g_{\ol{0}},\k_{\ol{0}})$ will be a symmetric pair in the classical sense, we may choose a Cartan subspace $\a\sub\p_{\ol{0}}$ which is unique up to conjugacy.  Then after choice of positive restricted roots (see below), we obtain a Harish-Chandra projection
\[
HC:\UU\g/(\UU\g)\k\to S(\a),
\]
which is an algebra homomorphism when restricted to $\ZZ_{(\g,\k)}$.  We aim to describe $HC(\ZZ_{(\g,\k)})$, which will in particular describe the spectrum of invariant differential operators.

To obtain a description, consider the restricted root system $\Delta\sub\a^*$ of $(\g,\k)$, which is given by the set of non-zero restrictions of roots of $\g$ to $\a$. The restricted root system of $(\g_{\ol{0}},\k_{\ol{0}})$ will sit inside $\Delta$ from which we obtain a (baby) Weyl group $W\sub \Aut(\a)$. To every (restricted) root $\alpha\in\Delta$, we have a root space $\g_{\alpha}\sub\g$, and we set 
\[
m_{\alpha,0}:=\dim(\g_{\alpha})_{\ol{0}}, \ \ \ m_{\alpha,1}:=\dim(\g_{\alpha})_{\ol{1}}, \ \ \ n_{\alpha}:=m_{\alpha,1}/2,
\]
(note that $m_{\alpha,1}$ is always even, so $n_{\alpha}\in\N$).  We say that $\alpha$ is \emph{regular} (resp.~\emph{singular}) if $\g_{\alpha},\g_{-\alpha}$ generate a subalgebra containing a copy $\s\l(2)$ (resp.~$\g_{\pm\alpha}$ are purely odd).  One can show that every (restricted) root is either regular or singular.   Finally, make a choice of positive restricted roots $\Delta^+\sub\Delta$, from which we obtain a Weyl vector $\rho\in\a^*$ defined as:
\[
\rho=\frac{1}{2}\sum\limits_{\alpha\in\Delta^+}(m_{\alpha,0}-m_{\alpha,1})\alpha.
\]
We will write $S(\a)^{W_{\smallbullet}}$ for the $W$-invariant polynomials under the $\rho$-shifted action of $W$ on $S(\a)$ (the dot action).

Define $J_{\a}\sub S(\a)$ to consist of those $f\in S(\a)^{W_{\smallbullet}
}$ such that for any (restricted) root $\alpha$ and any $\lambda\in\a^*$ satisfying $(\lambda+\rho,\alpha)=0$,
\begin{equation}\label{eqn sing invc cond}
    f(\lambda+r\alpha)=f(\lambda-r\alpha), \quad  1\leq r\leq n_{\alpha}.
\end{equation}

\begin{thm}\label{main thm}
    The Harish-Chandra projection gives an isomorphism of algebras
    \[
    \ZZ_{(\g,\k)}\xto{\sim}J_{\a}.
    \]
\end{thm}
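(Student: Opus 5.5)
The proof splits into three parts: injectivity, the inclusion $HC(\ZZ_{(\g,\k)})\sub J_{\a}$, and surjectivity.

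\emph{Injectivity and the algebra structure.} Using the Iwasawa decomposition $\g=\k\oplus\a\oplus\n$ and PBW, we have $\UU\g=\UU\a\otimes\UU\n\oplus (\UU\g)\k$ up to the appropriate identification. Hence $\UU\g/(\UU\g)\k\cong S(\a)\oplus \n\,\UU\g/(\UU\g)\k$, and $HC$ is the projection onto the first summand, followed by the $\rho$-shift. For $z\in\ZZ_{(\g,\k)}$, the standard argument applies: $\k$-invariance together with the decomposition $\n\oplus\theta\n\oplus\dots$ shows that $z\equiv HC(z)$ modulo $\n\,\UU\g$. From this, multiplicativity follows. For injectivity I would pass to the associated graded. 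Here $\gr\ZZ_{(\g,\k)}\hookrightarrow S(\p)^{\k}$, and the symbol of $HC(z)$ is the restriction of the symbol of $z$ to $\a^*$. Injectivity therefore reduces to injectivity of the restriction map $S(\p)^{\k}\to S(\a)$. That in turn follows from the super Chevalley restriction theorem for the pair $(\g,\k)$, which I would take from the paper's earlier sections (the abstract says it is used).

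\emph{Image lies in $J_{\a}$.} $W$-invariance under the dot action is inherited from the even pair $(\g_{\ol0},\k_{\ol0})$, after checking that the odd part contributes correctly to $\rho$. The key point is that each simple reflection of $W$ can be realized inside the rank-one subpair generated by $\g_{\pm\alpha}$ and $\g_{\pm2\alpha}$. For the conditions \eqref{eqn sing invc cond} I would use a rank-one reduction. Fix $\alpha$ and take $\lambda$ with $(\lambda+\rho,\alpha)=0$. Consider the Levi-type subpair $(\g^{\alpha},\k^{\alpha})$ with $\g^{\alpha}=\g^{\a_\alpha}$, the centralizer of $\a_\alpha=\ker\alpha\sub\a$. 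This is a supersymmetric pair of restricted rank one, up to a central part. The map $z\mapsto$ (its projection to $\UU\g^{\alpha}/(\UU\g^\alpha)\k^\alpha$ modulo the positive root vectors outside $\g^\alpha$) is compatible with $HC$ after an appropriate $\rho$-shift; this is a transitivity of Harish-Chandra projections. The statement then reduces to the rank-one case. There one classifies the possibilities, e.g.\ $(\g\l(1|2),\g\l(1|1))$, $(\o\s\p(1|2),\o\s\p(1|1))$-type pairs and $(\g\l(m|2n),\g\l(m|2n-1)\oplus\g\l(1))$-type families. In each case one exhibits spherical vectors explicitly. Alternatively, one uses that $\ZZ$ acts by scalars on the spherical subquotients of $\Ind_{\b}^{\g}$, and that for $(\lambda+\rho,\alpha)=0$ the modules with highest weights $\lambda\pm r\alpha$ share a spherical composition factor, or are linked by a nonzero intertwining map. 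This gives $f(\lambda+r\alpha)=f(\lambda-r\alpha)$ for $1\le r\le n_\alpha$. Since both sides are polynomial in $\lambda$, it suffices to verify the identity on a Zariski dense set of $\lambda$.

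\emph{Surjectivity.} I would again argue on associated graded objects. The conditions defining $J_{\a}$ have leading-term versions, and $\gr J_{\a}$ sits inside the algebra $I$ of $W$-invariant polynomials on $\a^*$ satisfying the top-degree condition. That condition says $\partial_\alpha^{k}f$ vanishes on $\alpha^\perp$ for odd $k<2n_\alpha$, i.e.\ $f$ restricted near $\alpha^\perp$ is a polynomial in $\alpha^2$ up to the required order. The super Chevalley theorem should identify precisely this $I$ with the image of $S(\p)^{\k}$. Then a dimension count degree by degree, combined with the lifting of symbols via symmetrization $S(\p)^{\k}\to\ZZ_{(\g,\k)}$, gives $\gr J_{\a}\sub\gr HC(\ZZ)$, and surjectivity follows by induction on degree. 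I expect the main obstacle to be this step, namely matching $\gr J_{\a}$ exactly with the Chevalley image. The conditions \eqref{eqn sing invc cond} are not homogeneous, so one must show that their leading terms give exactly the restriction-image conditions and nothing weaker. I would attack this first in rank one by explicit computation of both sides, then globalize using that both algebras are cut out by conditions localized along the hyperplanes $\alpha^\perp$.
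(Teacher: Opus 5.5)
Your three-part architecture matches the paper's. Injectivity comes from $\gr HC=\sigma\circ CR^*$ together with injectivity of the Chevalley restriction map. The inclusion $HC(\ZZ_{(\g,\k)})\sub J_{\a}$ comes from rank-one reduction. Surjectivity comes from $\gr J_{\a}\sub I_{\a}=CR^*(S(\p)^{\k})$. The leading-term check you flag as the main obstacle is in fact only a few lines: for non-isotropic singular $\alpha$, the part of $f(\lambda-\rho)$ odd in $h_\alpha$ vanishes at $2n_\alpha+1$ values of $h_\alpha$, so its top term is divisible by $h_\alpha^{m_{\alpha,1}+1}$.

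\textbf{The gap is in the middle step.} Transitivity of Harish-Chandra projections holds only at roots that are simple in the fixed base $\Sigma$. The decomposition $\UU\g=\UU\g(\alpha)\oplus((\UU\g)\r+\n^{\alpha}\UU\g)$ is $\k(\alpha)$-stable only when $\n^{\alpha}$ is $\g(\alpha)$-stable. If $\alpha=\beta+\gamma$ with $\beta,\gamma\in\Delta^+_{\Sigma}$, then $[\g_{-\alpha},\g_{\beta}]\sub\g_{-\gamma}$, so $\k$-invariants are no longer sent to $\k(\alpha)$-invariants. No parabolic with Levi $\g^{\ker\alpha}$ has nilradical inside $\n_{\Sigma}$ unless $\alpha$ or $\alpha/2$ lies in $\Sigma$. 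So your Levi version silently replaces $\Sigma$ by a base in which $\alpha$ is simple.

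Classically one avoids this by proving $W_{\smallbullet}$-invariance at simple reflections and then propagating conditions along $W$-orbits. Here that also fails, because the simple roots $\Pi$ of $\Delta_{reg}^+$ need not lie in $\Sigma$. For $\o\s\p(4+a|8+2b)$ with the standard base, $\varepsilon_2\in\Pi$ while $\varepsilon_2=(\varepsilon_2-\delta_1)+(\delta_1-\delta_2)+\delta_2$. So your $W_{\smallbullet}$-invariance step fails as stated. ``Inherited from the even pair'' is not a mechanism either, since there is no $HC$-compatible map $\ZZ_{(\g,\k)}\to\ZZ_{(\g_{\ol{0}},\k_{\ol{0}})}$.

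Your phrase ``compatible with $HC$ after an appropriate $\rho$-shift'' is exactly the statement that $t_{\rho}^*\circ HC_{\Sigma}$ is independent of $\Sigma$ on $\ZZ_{(\g,\k)}$. The paper obtains this only as a consequence of its argument. For $W$-conjugate bases it is equivalent to the dot-invariance you are trying to prove, so assuming it is circular.

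\textbf{The missing ingredient} is how $HC$ changes under a singular reflection: for singular $\alpha\in\Sigma$, $HC_{\Sigma}(D)=t^*_{m_{\alpha,1}\alpha}(HC_{r_{\alpha}\Sigma}(D))$. The paper proves this inside $\g(\alpha)$, which up to split factors is $\o\s\p(2|2n)$ or $\g\l(1|1)\times\g\l(1|1)$. For typical $\lambda\in\Lambda^+$, the module $\UU\g(\alpha)f_{\lambda}$ is a Kac module. Its lowest weight vector $g$ has weight $\lambda-m_{\alpha,1}\alpha$ and is $\n_{r_{\alpha}\Sigma}$-highest. Since $\UU\g(\alpha)f_{\lambda}=\UU\k(\alpha)f_{\lambda}$, the invariant $D$ takes the same value on $f_\lambda$ and on $g$, and density of $\Lambda^+$ finishes.

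Because $\rho_{r_{\alpha}\Sigma}=\rho_{\Sigma}+m_{\alpha,1}\alpha$, the local conditions $J_{\a,\Sigma,\beta}$ transform by the same translation. The combinatorial facts $\Pi\sub\Delta_{\Sigma,ad}$ and $\Delta_{sing}\sub\pm W(\Delta_{\Sigma,ad,sing})$ then reduce membership in $J_{\a}$ to conditions at roots that become simple after singular reflections. At those roots the rank-one result applies.

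A few smaller points:
\begin{itemize}
\item Even at a simple singular root you need $(\rho_{\Sigma},\alpha)=(\rho_{\alpha},\alpha)$. This has no reflection argument; the paper uses $h_{\alpha}\in[\g_{\alpha},\g_{-\alpha}]$.
\item None of the rank-one pairs you list is singular. The new condition comes from $(\o\s\p(2|2n),\o\s\p(1|2n))$ and from $(\g\l(1|1)\times\g\l(1|1),\g\l(1|1))$. For the first, the paper uses $f_{n-r}\in\UU\g\cdot f_{n+r}$, or alternatively surjectivity of $Z(\g)$ onto $\ZZ$ in rank one together with the Sergeev--Kac--Gorelik theorem.
\item The paper's $HC$ is projection followed by the antipode, with no $\rho$-shift; the shift is built into $J_{\a}$.
\end{itemize}
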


The authors are not aware of the peculiar invariance condition (\ref{eqn sing invc cond}) appearing elsewhere in the literature. 
A description of $HC(\ZZ_{(\g,\k)})$ was also given in \cite{A1} using techniques from integration theory on CS manifolds, but no explicit invariance conditions were given to describe the polynomials in $HC(\ZZ_{(\g,\k)})$ in \emph{loc.~cit.}  Moreover, our proof is purely algebraic and more in the spirit of Lepowsky's arguments in \cite{L1,L2}.

\subsection{Ghost distributions}

In the super setting there is a parallel story to the above, which began with the introduction of the sCasimir (or Casimir's ghost) in \cite{ABL}, an element in the enveloping algebra of $\o\s\p(1|2n)$ which squares to a central element. In \cite{G1} Gorelik generalised the work of \cite{ABL} with the introduction of the anti-center $\AA$ and ghost center $\widetilde{\ZZ}$, which lie in the enveloping algebra of an arbitrary Lie superalgebra, and work as a useful enlargement of the center.  For $\g$ Kac-Moody, Gorelik described the Harish-Chandra image of $\AA$ and $\widetilde{\ZZ}$.  

In \cite{Sh1}, Gorelik's theory of the ghost center was generalized to the setting of supersymmetric spaces as follows.  If $(\g,\k)$ is a supersymmetric pair, we consider the new supersymmetric pair $(\g,\k')$ arising from the involution $\delta\circ\theta$, where $\delta(x)=(-1)^{\ol{x}}x$ is the grading operator.  The Lie superalgebra $\k'$ acts naturally on the distributions of $G/K$, i.e.~$\UU\g/\UU\g\k$,  via left multiplication and we write $\AA_{(\g,\k)}$ for the invariants of this action.  We call the elements of $\AA_{(\g,\k)}$ \emph{ghost distributions}.  It is always the case that $\AA_{(\g,\k)}$ is a module over $\ZZ_{(\g,\k)}$, and the same will be true of its Harish-Chandra image.  The Harish-Chandra image of $\AA_{(\g,\k)}$ contains information about the following branching problem: when does a spherical representation $V$ of $\g$ contain a copy of $I_{\k'}(\C)$, the indecomposable injective on the trivial module for $\k'$.

In many cases of interest, it happens that $(\g,\k')$ also satisfies the Iwasawa decomposition, in which case it is conjugate to $(\g,\k)$ and we call the pairs \emph{interlacing}.  When $(\g,\k)$ is interlacing, it was shown in \cite{Sh2} that $\ZZ_{(\g,\k)}\oplus\AA_{(\g,\k)}$ has the structure of an algebra such that the Harish-Chandra projection is an algebra homomorphism, generalizing Gorelik's ghost center.  
  
We now provide a description of $HC(\AA_{(\g,\k)})$ with respect to a chosen base $\Sigma\sub\Delta$.  For a restricted root $\alpha\in\Delta$, we say that $\alpha$ is \emph{odd} if it is the restriction of an odd root from $\g$, and otherwise we say it is \emph{purely even}.  Write $\Delta_{odd}$ for the set of odd roots, and $\Delta_{ev}$ for the purely even roots.  Define the \emph{pure Weyl vector} to be :
\[
    \rho^{\text{pure}}=\frac{1}{2}\sum\limits_{\alpha\in\Delta_{ev}^+}m_{\alpha,0}\alpha-\frac{1}{2}\sum\limits_{\alpha\in\Delta_{odd}^+}m_{\alpha,1}\alpha.
    \]
We have $\rho^{\text{pure}}=\rho$ if and only if $(\g,\k)$ is interlacing. 
Consider the following polynomial $T_{\Sigma}\in S(\a)$, where for a root $\alpha$ we write $h_{\alpha}(\lambda):=(\alpha,\lambda)$:
\[
T_{\Sigma}:=T_{reg}T_{iso},
\]
where
\[
T_{reg}:=\prod\limits_{\alpha\in\Delta_{reg}^+}\prod\limits_{s=0}^{n_{\alpha}-1}(h_{\alpha}+(\rho^{\text{pure}}+(1-n_{\alpha}+2s)\alpha,\alpha)),
\]
\[
T_{iso}:=\prod\limits_{\alpha\in\Delta^+_{iso}}(h_{\alpha}+(\rho,\alpha)).
\]
Then define $M_{\a}\sub S(\a)$ to consist of those polynomials $f\in S(\a)^{W_{\smallbullet}}T_{\Sigma}$ such that for admissible singular roots $\alpha$ (Definition \ref{defn admissible roots}), we have
\begin{equation}\label{eqn odd ghost invce}
    f(\lambda+r\alpha)=(-1)^{r}f(\lambda-r\alpha), \  \ \ 1\leq r\leq n_{\alpha},
\end{equation}
for $(\lambda+\rho,\alpha)=0$.  
\begin{thm} \label{main ghost theorem} 
The Harish-Chandra projection gives an isomorphism
\[
\AA_{(\g,\k)}\xto{\sim} M_{\a}.
\]
\end{thm}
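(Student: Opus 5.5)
We will prove \cref{main ghost theorem} by splitting it into three assertions: $HC$ is injective on $\AA_{(\g,\k)}$; $HC(\AA_{(\g,\k)})\sub M_{\a}$; and every element of $M_{\a}$ is attained. The overall strategy runs parallel to the proof of \cref{main thm}: a rank-one reduction for the necessary conditions, and the Chevalley restriction theorem together with a leading-term argument for surjectivity.

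\textbf{Injectivity.} This is the easy step. Using the Iwasawa decomposition we have $\UU\g/(\UU\g)\k\cong \UU\n\otimes S(\a)$. Since $\k'$ and $\k$ share the even part $\k_{\ol 0}$, an element of $\AA_{(\g,\k)}$ is in particular $\k_{\ol0}$-invariant. The standard argument then applies: if the $S(\a)$-component vanishes, the element annihilates all spherical vectors in a Zariski-dense family of spherical modules, so it is zero. In practice, injectivity of $HC$ on $\k_{\ol0}$-invariants of the appropriate associated graded is deduced from the classical symmetric pair $(\g_{\ol0},\k_{\ol0})$.

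\textbf{The image lies in $M_{\a}$.} This is done root by root, by rank-one reduction. For a simple restricted root $\alpha$, consider the rank-one supersymmetric subpair $(\g^\alpha,\k^\alpha)$ generated by $\g_{\pm\alpha}$, $\g_{\pm2\alpha}$ and $\a$. Projecting $z\in\AA_{(\g,\k)}$ parabolically to the corresponding Levi-type pair gives an element of the ghost distributions of that Levi pair, suitably $\rho$-shifted. This reduces everything to explicit rank-one computations.

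\begin{itemize}
\item \emph{Regular root.} The rank-one ghost distributions are $\ZZ$ times an explicit element. Its Harish-Chandra image is computed using the known description for $\o\s\p(1|2)$-type and $\s\l(2)$-with-odd-multiplicity pairs. This yields divisibility by the factors of $T_{reg}$ attached to $\alpha$, together with the dot-$s_\alpha$ invariance of the quotient. We then use $W$-conjugacy to pass from simple roots to all positive roots, and from the individual factors to their product $T_{reg}$.
\item \emph{Isotropic root.} The rank-one pair is $\g\l(1|1)$-like, and a direct computation forces the factor $h_\alpha+(\rho,\alpha)$.
\item \emph{Admissible singular root.} Here the rank-one pair has $\g_{\pm\alpha}$ purely odd of dimension $2n_\alpha$. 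We compute the spectrum of the odd ghost on the rank-one spherical modules, as in the proof of (\ref{eqn sing invc cond}), but twisted by the grading operator $\delta$. This produces the sign $(-1)^r$ in (\ref{eqn odd ghost invce}).
\end{itemize}

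For singular roots that are not simple in $\Sigma$, we change the base by odd reflections. The conditions then transport, because $T_\Sigma$ changes by the predicted factor; this is where admissibility (\cref{defn admissible roots}) is used.

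\textbf{Surjectivity.} This is the main obstacle. We first note that $M_{\a}$ is a $J_{\a}$-module and that $HC(\AA_{(\g,\k)})$ is a $J_{\a}$-submodule, using \cref{main thm} and the module structure of $\AA$ over $\ZZ$. We then pass to associated graded objects. On the symbol level, $\gr\AA_{(\g,\k)}$ embeds into $S(\p)^{\k'}$-type invariants, and by the Chevalley restriction theorem for $(\g_{\ol0},\k_{\ol0})$, combined with the description of odd-invariant restrictions, the top-degree parts of $M_{\a}$ are exactly the restrictions of such invariants. An induction on degree then shows that any $f\in M_{\a}$ can be matched modulo lower degree by some $HC(a)$; subtracting and repeating completes the argument.

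The delicate point is showing that the leading terms of elements of $M_{\a}$, namely $T_\Sigma^{top}$ times $W$-invariants with the graded sign condition, are realized by ghost symbols. In the interlacing case I would first try to construct an explicit generator $a_0$ with $HC(a_0)=T_\Sigma$, via the sCasimir-type construction of \cite{Sh1,Sh2}. One would then show that $M_{\a}=J_{\a}\,T_\Sigma$ plus corrections at the admissible singular roots, by comparing Hilbert series.
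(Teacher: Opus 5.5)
\textbf{The main gap is surjectivity.} You leave it as a plan, and the mechanism you sketch cannot work, because $\gr HC$ on $\AA_{(\g,\k)}$ is not a restriction of symbols. By (\ref{eqn ghost dist pres}), every ghost distribution has the form $Dv_{\k'}$. The symbol of $v_{\k'}$ lies in ${\bigwedge}^{2k}\p_{\ol1}$ with $2k=\dim\p_{\ol1}$. Hence every ghost symbol restricts to zero on $\a$, even though $HC(Dv_{\k'})\neq 0$: its degree is $\deg D+k$, not $\deg D+2k$.

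The input you are missing is \cite[Cor.~3.3]{Sh2}: $HC(Dv_{\k'})=\pm h_1\cdots h_k\,HC(D)+\text{l.o.t.}$ Combined with the classical Chevalley theorem for $(\g_{\ol0},\k_{\ol0})$, this gives (\ref{eq:graded dim}). That is, the leading terms of $HC(\AA_{(\g,\k)})$ form exactly $N_\a=S(\a)^W\prod_{\alpha\in\Delta^+_{odd}}h_\alpha^{n_\alpha}$, and injectivity follows as well. With this lower bound, no ghost elements need to be constructed. Your proposed generator with $HC=T_\Sigma$ would only address interlacing pairs anyway, and a Hilbert series comparison needs an upper bound on $M_\a$ that you never supply.

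What remains is the elementary upper bound $\gr M_\a\sub N_\a$:
\begin{enumerate}
\item For $f=T_\Sigma g$ with $g\in S(\a)^{W_{\smallbullet}}$, the leading term is $\ol{T_\Sigma}\,\ol g$. Here $\ol{T_\Sigma}$ is the product of the $h_\alpha^{n_\alpha}$ over the regular odd and isotropic positive roots.
\item For a singular non-isotropic admissible $\alpha$, split $f(\lambda-\rho)$ into its parts odd and even in $h_\alpha$. The sign condition makes the odd part vanish at $h_\alpha=0,\pm2,\pm4,\dots$ and the even part at $h_\alpha=\pm1,\pm3,\dots$. So the leading term is divisible by $h_\alpha^{n_\alpha}$.
\item Lemma \ref{lemma conj props}(3) and $W$-invariance of the leading term cover the remaining singular roots.
\end{enumerate}
Then $N_\a=\gr HC(\AA_{(\g,\k)})\sub\gr M_\a\sub N_\a$, and nested subspaces of $S(\a)$ with the same leading terms coincide.

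\textbf{Three further steps need repair.}

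First, your injectivity argument uses only $\k_{\ol0}$-invariance, but $HC$ is not injective on $(\UU\g/(\UU\g)\k)^{\k_{\ol0}}$. For $(\g\l(1|1)\times\g\l(1|1),\g\l(1|1))$, this space is the weight-zero part of $\UU\g\l(1|1)$. It is spanned by $S(\h')$ and $S(\h')yx$, where $\h'$ is the Cartan subalgebra and $x,y$ are the odd root vectors. This is too large to embed filtration-compatibly into $S(\a)\cong S(\h')$. Injectivity has to come from the leading-term formula above.

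Second, in the containment step, ``the conditions transport because $T_\Sigma$ changes by the predicted factor'' only describes how the target moves under an odd reflection (Lemma \ref{lemma trans local invce condition}). You also need how the map moves: $HC_\Sigma(D)=(-1)^{n_\alpha}t^*_{m_{\alpha,1}\alpha}(HC_{r_\alpha\Sigma}(D))$ (Proposition \ref{prop ghost borel change}). For ghosts the argument used for $\ZZ_{(\g,\k)}$ does not apply. The lowest weight vector $g$ of the typical Kac module $\UU\g(\alpha)f_\lambda$ lies in $\UU\k(\alpha)f_\lambda$, but $D$ is only $\k'$-invariant. One needs an element $a\in A$ with $e^\alpha(a)=\pm\sqrt{-1}$, which swaps $\k(\alpha)$ and $\k'(\alpha)$, to compare $D(g)$ with $D(f_\lambda)$.

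Third, the regular conditions are needed at every root of $\Pi$. These roots are reached from $\Sigma$ by odd reflections, not by $W$-conjugacy. The passage from the individual $T_\alpha$ to all of $T_{reg}$ must use $W'$ (Lemma \ref{lemma conj props}(2)), since $T_\Sigma$ is only $W'_{\smallbullet}$-invariant (see Lemma \ref{lemma local desc M_a}).
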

We note that $M_\a$ is a $J_\a$-module.  Further, if $\rho=\rho^{\text{pure}}$ (i.e.~$(\g,\k)$ is interlacing), then $T_{\Sigma}^2$ is $W_{\smallbullet}$-invariant, and the product of two polynomials in $M_{\a}$ lies in $J_{\a}$. 

\subsection{An invariant Harish-Chandra homomorphism}  The Harish-Chandra projection $HC$ depends on a choice $\Delta^+$ of positive restricted roots, and this is equivalent to a choice $\Sigma$ of simple roots.  For this subsection, let us make this dependency explicit by writing $HC_{\Sigma}$ in place of $HC$.  Already for $\ZZ_{(\g,\k)}$ and $\AA_{(\g,\k)}$, we see a dependency on the Weyl vector in the image.  In an attempt to construct a single Harish-Chandra homomorphism that lacks a dependency on $\Sigma$, let us introduce $hc_{\Sigma}=t_{\rho}^*\circ HC_{\Sigma}:\UU\g/(\UU\g)\k\to S(\a)$, where $t_{\rho}^*(f)(\mu)=f(\mu-\rho)$.  Then we see that $hc_{\Sigma}(\ZZ_{(\g,\k)})$ is the set of polynomials $f\in S(\a)^W$ such that (\ref{eqn sing invc cond}) holds whenever $(\lambda,\alpha)=0$.  As a consequence of Proposition \ref{prop: reflections on HC}, we can moreover show that when restricted to $\ZZ_{(\g,\k)}$ we have $hc_{\Sigma}
=hc_{\Sigma'}$ for any two positive systems $\Sigma,\Sigma'$.  Thus we have a `well-defined' Harish-Chandra homomorphism $\ZZ_{(\g,\k)}\to S(\a)$.

For ghost distributions, the image $hc_{\Sigma}(\AA_{(\g,\k)})$ will depend on $\Sigma$ in general due to the (somewhat awkward) dependence on $\rho^{\text{pure}}$ and the set of admissible roots (see Example \ref{ex:non interlacing non invariance}).  However in the interlacing case,  $hc_{\Sigma}(\AA_{(\g,\k)})$ will have a uniform description as the set of polynomials $f\in S(\a
)$ which are anti-invariant (without $\rho$-shift) under the action of $W$ (see Lemma \ref{lemma interlacing invce conditions}), and satisfy (\ref{eqn odd ghost invce}) whenever $(\lambda,\alpha)=0$.  By Proposition \ref{prop ghost borel change}, we have $hc_{r_{\alpha}\Sigma}=(-1)^{n_{\alpha}}hc_{\Sigma}$ for any singular root $\alpha\in\Sigma$, while $hc_{r_\alpha\Sigma}=hc_{\Sigma}$ whenever $\alpha\in\Sigma$ is a regular root.  It follows (by Lemma \ref{lemma all Sigma conj}) that up to sign we obtain a well-defined Harish-Chandra homomorphism $\AA_{(\g,\k)}\to S(\a)$.

\subsection{Methods of Proof}
The proofs of Theorems \ref{main thm} and \ref{main ghost theorem} consist of two main steps. The first is to show that the image of the Harish-Chandra projection satisfies the required invariance conditions. The second is to prove that the projection maps are bijective.

To accomplish the first step, we established that the image of the Harish-Chandra map satisfies the desired invariance condition by restricting the pair to rank-one subalgebras corresponding to each root $\alpha$.  For Theorem \ref{main thm} it is done using that the center surjects onto the invariant differential operators for rank-one symmetric pairs. For Theorem \ref{main ghost theorem}, it was proven by \cite[Sec. 9]{Sh2} using representation-theoretic properties for the relevant rank-one pairs.
This gives the invariance conditions for simple roots. 
We then analyze how the Harish-Chandra map transforms when changing the choices of simple roots, which allows us to extend these properties from simple roots to all admissible roots. 
Having shown that the algebras of invariant polynomials can be defined using conditions corresponding to admissible roots, we complete this step of the proof. 

To prove that the maps are bijective, we use the associated graded spaces and compare the graded dimensions. The associated graded algebra of the algebra of invariant distributions is the algebra $S(\p)^\k$, shown to be isomorphic to the algebra $I(\a)$ in the Chevalley restriction theorem \cite{A2,RSS}. We show that the associated graded algebra of $J_\a$ is also $I(\a)$.
A parallel argument holds for ghost distributions, where we show that the associated graded space of $\mathcal{A}_{(\g,\k)}$ (as computed in \cite{Sh2}) is isomorphic to that of $M_\a$.

\subsection{Outline of paper} 
The paper is organized as follows.
In \cref{Sec:setup and restricted roots}, we review preliminaries about supersymmetric spaces and restricted root systems. This in particular includes properties of the Weyl group and its action, choices of simple roots, the notion of admissible roots, and properties of interlacing pairs. 

In  \cref{Sec: invariant polynomials} we define two subspaces of invariant polynomials. We will later show that they are precisely the  images of the  invariant distributions and ghost distributions under the Harish-Chandra projection. 

\cref{Sec:HC proj and rank one} introduces the  vector space of distributions, the Harish-Chandra projection map, and its behavior upon restriction to rank-one subalgebras.
In \cref{Sec: invariant distributions}, we prove \cref{main thm} and in \cref{Sec: Ghost proof} we discuss ghost distributions and prove \cref{main ghost theorem}.

   Finally, in an appendix we prove that when 
$\s\mathfrak t\r(\operatorname{ad}x)=0$ for any $x\in\k$ then \linebreak $(\UU\g)^\k\cap\k\UU\g=(\UU\g)^\k\cap\UU\g\,\k$. This generalizes a classical result for Lie algebras (see for example, \cite[Prop 9.1.10(ii)]{D}). 


\subsection{Acknowledgements}  The authors would like to thank Hadi Salmasian for interesting discussions.  The project was made possible by a SQuaRE at the American Institute for Mathematics. The authors thank AIM for providing a supportive and mathematically rich environment.
The research of R.S. was partially supported by  Israel Science Foundations Grants No. 1957/21.
The research of S.S. was partially supported by the
Simons Foundation under grant number 00006698. V.S. was supported by the NSF-BSF grant 2019694. A.S. was supported by ARC grant DP210100251 and by an AMS-Simons Travel Grant.

\section{Supersymmetric pairs and Restricted root systems}
\label{Sec:setup and restricted roots}

For background on Kac-Moody superalgebras, we refer the reader to \cite{S4}.

\subsection{Supersymmetric pairs}
Let $\g$ be a Kac-Moody superalgebra with a chosen nondegenerate, invariant form $(-,-)$.  Let $\theta$ be an involution of $\g$  which preserves $(-,-)$.  Set $\k$ to be the fixed-point subalgebra of $\theta$ and write $\p$ for the $(-1)$-eigenspace.  Let $\a\sub\p_{\ol{0}}$ be a Cartan subspace, meaning a maximal even subalgebra of $\p$ which consists only of semisimple elements.  Note that a Cartan subspace is abelian and unique up to conjugation by inner automorphism from $\k_{\ol{0}}$ acting on $\g_{\ol{0}}$ (\cite[Lem.~26.15]{timashev}).  We extend $\a$ to a $\theta$-stable Cartan subalgebra $\h$ of $\g$, and write $\h=\a\oplus\mathfrak{t}$ where $\mathfrak{t}:=\k\cap\h$.

Since $\theta$ preserves the form, notice that $(-,-)$ will descend to a nondegenerate form on $\a$, and thus also on $\a^*$.  As usual, we will call an element $\alpha\in\a^*$ \emph{isotropic} if $(\alpha,\alpha)=0$, and otherwise we will say that $\alpha$ is \emph{non-isotropic}.

\subsection{Restricted root systems} Write $\Delta\sub\a^*\setminus\{0\}$ for the set of nonzero weights for the action of $\a$ on $\g$.  It is easy to see that $\Delta$ is obtained by taking the nonzero restrictions of roots of $\g$ (in $\h^*$) to $\a^*$.  We call $\Delta$ the \emph{restricted root system} of $(\g,\k)$, and elements $\alpha\in\Delta$ are called \emph{restricted roots}.
 
\begin{definition}
    For a restricted root $\alpha\in\Delta$, define its multiplicity $m_{\alpha}$ to be the pair $(m_{\alpha,0}|m_{\alpha,1})$, where 
    \[
    m_{\alpha,i}:=\dim(\g_{\alpha})_{\ol{i}}.
    \]
\end{definition}

Note that $m_{\alpha,1}$ is always an even number due to the pairing $\langle x,y\rangle=(\theta(x),y)$ being a symplectic form on $(\g_{\alpha})_{\ol{1}}$, \cite[Prop. 2.10]{A2}.  Because of this, we will write 
\[
n_{\alpha}:=m_{\alpha,1}/2\in\N.
\]

\begin{definition}
 A restricted root $\alpha\in\Delta$ is called \emph{singular} if $m_{k\alpha,0}=0$ for all $k\in\Q\setminus\{0\}$, and \emph{regular} otherwise.
\end{definition}

Let $\phi:\Z\Delta\to \mathbb{R}$ be a group homomorphism such that $\phi(\alpha)\neq0$ for all $\alpha\in\Delta$.  Then set
\[
\Delta^{\pm}_{\phi}:=\{\alpha\in\Delta:\pm\phi(\alpha)>0\}\sub\Delta.
\]
It is clear that $\Delta=\Delta_{\phi}^+\sqcup\Delta_{\phi}^-$.  

\begin{definition}
    A \emph{positive system} is a choice of subset $\Delta^+\sub\Delta$ such that $\Delta^+=\Delta^+_{\phi}$ for some $\phi$.
\end{definition}

\begin{definition}
    We say $\Sigma\sub\Delta$ is a simple system (or a base, or a choice of simple roots) if $\Sigma$ is linearly independent in $\a^*$ and
    \[
    \Delta\sub\pm\sum\limits_{\alpha\in\Sigma}\N\alpha.
    \]
\end{definition}

\begin{lemma}
        \begin{enumerate}
            \item For a positive system $\Delta^+$, let $\Sigma\sub\Delta^+$ be the set of indecomposable roots.  Then $\Sigma$ is a simple system.
            \item For a simple system $\Sigma$, we obtain a positive system by setting:
            \[
\Delta^{+}=\Delta\cap\left(\sum\limits_{\alpha\in\Sigma}\N\alpha\right).
            \]
        \end{enumerate}
        Further, the above correspondence is a bijection.
\end{lemma}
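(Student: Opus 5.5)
The approach is the standard argument for finite "root systems" defined by a linear functional. The only data used is that $\Delta$ is a finite subset of $\a^*\setminus\{0\}$ with $\Delta=-\Delta$ (the latter holds because $\theta$ preserves the invariant form, so $\g_\alpha$ pairs nondegenerately with $\g_{-\alpha}$). No Weyl group or root-string properties will be used. One caveat: non-reduced phenomena such as $\alpha,2\alpha\in\Delta$ are allowed, so the argument must not assume multiples are excluded.

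For (1), fix $\phi$ with $\Delta^+=\Delta^+_\phi$. Call $\beta\in\Delta^+$ decomposable if $\beta=\beta_1+\beta_2$ with $\beta_i\in\Delta^+$, and let $\Sigma$ be the indecomposable ones.

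First I show $\Delta^+\sub\sum_{\alpha\in\Sigma}\N\alpha$ by induction on $\phi(\beta)$, which takes finitely many values. If $\beta$ is decomposable, then $\phi(\beta_i)<\phi(\beta)$, so induction applies to each $\beta_i$. Since $\Delta=\Delta^+\sqcup-\Delta^+$, this gives $\Delta\sub\pm\sum\N\alpha$.

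Next, linear independence. This is the step needing care, since we have no reflections and no a priori obtuse-angle property. The classical route shows $(\alpha,\beta)\le0$ for distinct $\alpha,\beta\in\Sigma$ and then uses positivity of $\phi$. That route requires $\alpha-\beta\in\Delta$ whenever $(\alpha,\beta)>0$, which may fail for super root systems; the form can also be indefinite or isotropic here. So I would avoid the form entirely.

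Instead I argue via cones. Let $C=\sum_{\beta\in\Delta^+}\mathbb{R}_{\ge0}\beta$. This is a pointed polyhedral cone, since $\phi>0$ on $C\setminus\{0\}$. Its extreme rays are spanned by elements of $\Delta^+$, and an element of $\Delta^+$ spanning an extreme ray whose $\phi$-value is minimal on that ray is indecomposable.

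That alone does not give independence, so I would exploit the specific structure instead. The natural candidate is to prove that $\Sigma$ coincides with the set of simple roots of a base in the sense of the references (\cite{A2}, \cite{S4}), which is linearly independent.

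Concretely, I would first reduce to the reduced system $\Delta_{red}$ of non-divisible roots. Indecomposables are then non-divisible or lie in $\Delta_{red}$ up to replacing $2\alpha$-type roots. I would then use that the restricted root system is a (possibly non-reduced, generalized) root system in the sense of Serganova's classification. For such a system, every positive root that is not simple is a sum of a simple root and a positive root, and simple roots of a base are independent.

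I expect the main obstacle to be precisely this independence. In full generality it is false: for instance $\{e_1,e_2,e_1+e_2\}$ as part of a set closed under negation with no $e_1-e_2$ is fine, but arbitrary finite sets can have dependent indecomposables. So it must rely on the structure theory of restricted root systems recorded in \cite{A2}, \cite{S4}. I would cite the classification-based fact that $\Delta$ admits a base and that positive systems correspond to bases there.

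For (2), given $\Sigma$, choose $\phi$ with $\phi(\alpha)=1$ for $\alpha\in\Sigma$ (possible by independence, extending linearly on $\Z\Delta\sub\operatorname{span}\Sigma$). Then $\Delta\cap\sum\N\alpha=\Delta^+_\phi$, because every root lies in $\pm\sum\N\alpha$ and is nonzero.

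For bijectivity, start from $\Sigma$ and form $\Delta^+$ as in (2). Each $\alpha\in\Sigma$ is indecomposable in $\Delta^+$: if $\alpha=\beta_1+\beta_2$, then $\phi(\beta_i)\ge1$ forces $\phi(\alpha)\ge2$, a contradiction. Conversely, an indecomposable $\beta\in\Delta^+$, written in $\sum\N\Sigma$, has a decomposition $\beta=\alpha+(\beta-\alpha)$ whenever its coefficient sum exceeds $1$. I would justify that $\beta-\alpha\in\Delta$ for some $\alpha\in\Sigma$ by the same cited structural fact, so $\beta\in\Sigma$. Starting instead from $\Delta^+$, the set of roots generated by its indecomposables recovers $\Delta^+$ by the first step.
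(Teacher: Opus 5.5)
\textbf{There is a genuine gap: the independence of the indecomposables in (1) is assumed, not proved.}

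The steps you actually carry out are correct: the induction on $\phi$ giving $\Delta^+\sub\sum_{\alpha\in\Sigma}\N\alpha$, the choice of $\phi$ with $\phi|_\Sigma=1$ in (2), and the indecomposability of each $\alpha\in\Sigma$ in $\Delta^+_\Sigma$. These are the parts the paper treats as routine. For everything else it simply cites \cite[Lem.~5.10, Prop.~5.11]{Sh4}, which treats restricted root systems of this setting. The gap is the one non-formal point, linear independence of the indecomposable roots in (1). As you note yourself, this fails for arbitrary finite symmetric sets, so it has to come from the structure of $\g$. Instead you appeal to $\Delta$ being a generalized root system in Serganova's sense, with a general theory of bases. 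That is false. In the paper's example $(\g\l(2|2),\o\s\p(2|2))$, the singular root $\alpha=\varepsilon_1-\delta$ has $(\alpha,\alpha)=\tfrac12\neq0$. Yet $r_\alpha(\varepsilon_1-\varepsilon_2)=-3\varepsilon_1-\varepsilon_2+4\delta\notin\Delta$. So invoking \emph{the fact that $\Delta$ admits a base and positive systems correspond to bases} just assumes the lemma, and attributes it to a framework that does not apply.

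\textbf{Two remarks toward a repair.}

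First, your bijectivity step does not need the extra fact that $\beta-\alpha\in\Delta$ for some $\alpha\in\Sigma$. Given (1), let $\Sigma'$ be the set of indecomposables of $\Delta^+_\Sigma$. Then $\Sigma\sub\Sigma'$, $\Sigma'$ is independent, and $\Sigma'\sub\sum_{\alpha\in\Sigma}\N\alpha$. Together these force $\Sigma'=\Sigma$, so everything reduces to independence.

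Second, (1) can be proved by passing to $\g$.
\begin{enumerate}
\item Perturb $\phi$ by a generic functional on the root lattice of $\g$. This gives a Borel subalgebra $\h\oplus\u^+$ with $\u^+\supseteq\n$. Let $R$ be the set of nonzero restrictions to $\a$ of its simple roots; these simple roots are independent in $\h^*$.
\item Since $\u^+$ is generated by simple root vectors, take a nonzero iterated bracket whose weight restricts to $\beta$. Removing its outermost factor with nonzero restriction shows that every $\beta\in\Delta^+\setminus R$ is decomposable. Hence $\Sigma\sub R$.
\item For independence of $R$, let $\omega$ be a Chevalley automorphism and set $\mathfrak{z}=\mathfrak{z}(\c(\a))$. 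Then $\omega\circ\theta$ preserves $\n$, $[\n,\n]$ and $\mathfrak{z}$. Hence $-\theta$ permutes the $\mathfrak{z}$-weights of $\n/[\n,\n]$.
\item These weights are the images of the simple roots not vanishing on $\a$, and they are linearly independent in $\mathfrak{z}^*$.
\item By the Iwasawa assumption, $\mathfrak{z}=(\mathfrak{z}\cap\mathfrak{t})\oplus\a$. So restriction to $\a$ is injective on the $(-1)$-eigenspace of $\theta$ on $\mathfrak{z}^*$.
\item Restriction therefore identifies $R$ with the vectors $\tfrac12(\gamma-\theta\gamma)$, one for each $(-\theta)$-orbit. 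These have disjoint supports in an independent set, so they are independent.
\end{enumerate}
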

\begin{proof}
    (1) follows from Lem.~5.10 of \cite{Sh4}, and (2) is clear.  The fact that the correspondence is bijective is Prop.~5.11 of \cite{Sh4}.
\end{proof}

In what follows, for a simple system $\Sigma\sub\Delta$, we write $\Delta^+_{\Sigma}:=\Delta\cap\left(\sum\limits_{\alpha\in\Sigma}\N\alpha\right)$ for the corresponding positive system.

\subsection{Little Weyl group} \label{sec:little Weyl group}

\begin{definition}
    For a regular root $\alpha\in\Delta$ define the reflection $r_{\alpha}\in GL(\a^*)$ to be
    \[
    r_{\alpha}(\lambda)=\lambda-2\frac{(\lambda,\alpha)}{(\alpha,\alpha)}\alpha.
    \]
\end{definition}

\begin{definition}
    Define the little Weyl group $W$ of $(\g,\k)$ with respect to $\a$ to be the subgroup of $GL(\a^*)$ generated by the reflections $r_{\alpha}$ for all regular roots $\alpha\in\Delta$.
\end{definition}

\begin{lemma}
    \begin{enumerate}
        \item The little Weyl group $W$ of $(\g,\k)$ is equal to the little Weyl group of $(\g_{\ol{0}},\k_{\ol{0}})$.
        \item We have an isomorphism
        \[
        N_{K}(\a)/C_{K}(\a)\cong W.
        \]
    \end{enumerate}
\end{lemma}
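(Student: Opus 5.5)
The plan is to reduce both statements to the classical theory of the even symmetric pair $(\g_{\ol{0}},\k_{\ol{0}})$, for which $\a\sub\p_{\ol{0}}$ is also a Cartan subspace.

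For (1), recall that the little Weyl group of $(\g_{\ol{0}},\k_{\ol{0}})$ is generated by the reflections in the restricted roots of $\g_{\ol{0}}$ with respect to $\a$. These are exactly the $\beta\in\Delta$ with $m_{\beta,0}>0$. I will check that the two generating sets of reflections coincide.

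First, every even restricted root $\beta$ (so $m_{\beta,0}>0$) is regular by definition. Its reflection $r_\beta$ is the classical one. For that formula to make sense we need $(\beta,\beta)\neq0$, which holds because the form restricted to the even part $\a$ of $\g_{\ol{0}}$ is the relevant nondegenerate form there, and classical restricted roots are non-isotropic.

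Conversely, let $\alpha$ be regular. Then $m_{k\alpha,0}\neq0$ for some $k\in\Q\setminus\{0\}$, so $\beta=k\alpha$ is a restricted root of $\g_{\ol{0}}$. Since $\alpha$ and $\beta$ are proportional with $(\beta,\beta)\neq 0$, we get $(\alpha,\alpha)\neq0$ and $r_\alpha=r_\beta$. Hence the reflections $r_\alpha$ over regular roots are exactly the reflections over classical restricted roots, and the two groups generated inside $GL(\a^*)$ are equal. The one point needing care is this non-isotropy claim. I will justify it using that $(-,-)$ restricted to $\g_{\ol{0}}$ is nondegenerate and $\theta$-invariant, so its restriction to $\a$ is nondegenerate. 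The classical theory, via real forms or a compatible Cartan involution, then gives $(\beta,\beta)\neq0$ for restricted roots. If $\g_{\ol{0}}$ has a center, the form may differ from the Killing form on simple ideals, but it is a nonzero multiple of it on each simple ideal, which is enough.

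For (2), $K$ is a group with Lie algebra $\k_{\ol{0}}$; this is the even group, since supergroup normalizers and centralizers of the even subspace $\a$ are determined by their underlying even groups. The classical theorem for symmetric pairs (e.g.\ \cite[Lem.~26.15]{timashev} and the surrounding theory, or Helgason for the reductive case) gives $N_K(\a)/C_K(\a)\cong W(\g_{\ol{0}},\k_{\ol{0}})$. Combined with (1), this yields the isomorphism with $W$.

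The main obstacle I expect is the possible dependence on which group $K$ is used when $K$ is disconnected: $N_K(\a)/C_K(\a)$ could a priori be larger than the Weyl group. I would handle this by taking $K$ connected, or more generally by assuming $K$ lies between $K_{\ol{0}}^\circ$ and the fixed points of $\theta$ in the adjoint group. In that range the classical result still gives the Weyl group, using the standard fact that every component of the fixed group meets $C_K(\a)$.
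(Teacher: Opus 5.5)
Your proposal is correct and follows the same route as the paper. For (1) the paper simply calls the equality of the two reflection groups obvious, and your comparison of generators (including the non-isotropy check for regular roots) is exactly the content behind that word; for (2) it cites the classical result \cite[Prop.~26.19]{timashev} for $(\g_{\ol{0}},\k_{\ol{0}})$, just as you do. One small precision: $N_K(\a)$ and $C_K(\a)$ are not themselves purely even, since both have odd Lie superalgebra $\m_{\ol{1}}$ (an odd element of $\k$ normalizing $\a$ must centralize it); this shared odd part is exactly why their quotient reduces to the even quotient $N_{K_0}(\a)/C_{K_0}(\a)$ that you use.
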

\begin{proof}
    The first claim is obvious, and the second claim is classical (Prop.~26.19, \cite{timashev}).
\end{proof}

\subsection{Singular reflections}\label{sec reflections}

    For a simple system $\Sigma$ and a singular root $\alpha\in\Sigma$, we define $r_{\alpha}\Sigma$ to be the simple system corresponding to the positive system $(\Delta^+_{\Sigma}\setminus\{\alpha\})\cup\{-\alpha\}$.  

    We call the correspondence $\Sigma\mapsto r_{\alpha}\Sigma$ a \emph{singular reflection}.

\subsection{Iwasawa decomposition}\label{section iwasawa} Write $\m=\c(\a)\cap\k$.   We impose the following \textbf{assumption} for the rest of the paper:
	
\begin{equation}\label{assumption iwasawa}
    \c(\a)=\m\oplus\a.
\end{equation}  

For a simple system $\Sigma,$ we define
\[
\n_{\Sigma}:=\bigoplus\limits_{\alpha\in\Delta_{\Sigma}^+}\g_{\alpha}.
\]
We will abbreviate $\n_{\Sigma}$ by $\n$ when the context is clear.  Set $\q=\m\oplus\a\oplus\n$; notice that this is a parabolic subalgebra with Levi subalgebra $\c(\a)$.   By our assumption (\ref{assumption iwasawa}), $(\g,\k)$ admits an Iwasawa decomposition:
 \begin{equation}\label{iwasawa decomp}
      \g=\k\oplus\a\oplus\n.
 \end{equation}
 
 In particular we have a decomposition 
\begin{equation}\label{decomp of Ug from Iwasawa}
\UU\g=S(\a)\oplus(\n\UU\g+(\UU\g)\k).    
\end{equation}	

\subsection{Rank-one subalgebra} \label{subsec:rank one prelim}   For $\alpha\in\Delta$ we write: 
	\[
	\g(\alpha)=\c(\a)\oplus\bigoplus\limits_{k\in\Q\setminus\{0\}}\g_{k\alpha}.
	\]	
	Then $\g(\alpha)$ is again Kac-Moody (\cite[Lem.~3.1]{SSh}), is $\theta$-stable, and we have a decomposition $\g(\alpha)=\k(\alpha)\oplus\p(\alpha)$.  If we let $\n(\alpha):=\g_{\alpha}\oplus\g_{2\alpha}$, then our Iwasawa decomposition (\ref{assumption iwasawa}) persists, i.e. $\g(\alpha)=\k(\alpha)\oplus\a\oplus\n(\alpha)$.
    
    Assume further that we have chosen a base $\Sigma\sub\Delta^+$ and that $\alpha\in\Sigma$, and write
    \begin{equation}\label{eqn nalpha}    \n^{\alpha}:=\bigoplus\limits_{\beta\in\Delta^+\setminus\{\alpha,2\alpha\}}\g_{\beta},
    \end{equation}
    so that $\n=\n(\alpha)\oplus\n^{\alpha}$.  Choose a $\k(\alpha)$-stable complement $\r$ of $\k(\alpha)$ in $\k$.  Then we have
	\begin{equation}
	    \label{eq:decomp with the rank one subalg}\g=\r\oplus\k(\alpha)\oplus\a\oplus\n(\alpha)\oplus\n^{\alpha}=\r\oplus\g(\alpha)\oplus\n^{\alpha}.
	\end{equation}
	Observe that since $\alpha$ is simple, $\n^{\alpha}$ is $\g(\alpha)$-stable, and thus $\k(\alpha)$-stable, meaning the above decomposition is $\k(\alpha)$-stable.  It follows that we obtain a $\k(\alpha)$-stable decomposition 
	\begin{equation}\label{eqn ug decomp at alpha}
    \UU\g=\UU\g(\alpha)\oplus((\UU\g) \r+\n^{\alpha}\UU\g).
	\end{equation}

\subsection{Weyl vector}
\begin{definition}
    For a choice of simple roots $\Sigma\sub\Delta$, define the Weyl vector $\rho_{\Sigma}$ by
    \[
    \rho=\rho_{\Sigma}:=\frac{1}{2}\sum\limits_{\alpha\in\Delta^+}(m_{\alpha,0}-m_{\alpha,1})\alpha.
    \]
    Further, for $\alpha\in\Sigma$ we set $\rho_{\alpha}$ to be the Weyl vector for $(\g(\alpha),\k(\alpha))$ with respect to the positive system $\{\alpha\}$.
\end{definition}

\begin{lemma}\label{lemma rho refl}
    Let $\Sigma\sub\Delta$ be a choice of simple roots, and let $\alpha\in\Sigma$.
    \begin{enumerate}
        \item If $\alpha$ is regular then:
        \[
        \rho_{r_{\alpha}\Sigma}=r_{\alpha}(\rho_{\Sigma}).
        \]
        \item If $\alpha$ is singular then:
        \[
        \rho_{r_{\alpha}\Sigma}=\rho_{\Sigma}+m_{\alpha,1}\alpha.
        \]
    \end{enumerate}
\end{lemma}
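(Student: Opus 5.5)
We will compare the two positive systems $\Delta^+_{\Sigma}$ and $\Delta^+_{r_\alpha\Sigma}$ root by root and track how each term of
\[
\rho_{\Sigma}=\tfrac12\sum_{\beta\in\Delta^+_\Sigma}(m_{\beta,0}-m_{\beta,1})\beta
\]
changes. Throughout we use that multiplicities are invariant under $\beta\mapsto-\beta$, since $\theta$ maps $\g_\beta$ to $\g_{-\beta}$ preserving parity.

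\emph{Singular case.} By definition $\Delta^+_{r_\alpha\Sigma}=(\Delta^+_\Sigma\setminus\{\alpha\})\cup\{-\alpha\}$. The only change in the sum is therefore that the term $\tfrac12(m_{\alpha,0}-m_{\alpha,1})\alpha$ is replaced by $-\tfrac12(m_{\alpha,0}-m_{\alpha,1})\alpha$. Since $\alpha$ is singular, $m_{\alpha,0}=0$, so the difference is $m_{\alpha,1}\alpha$, which gives (2). One point needs checking: this presumes that $2\alpha\notin\Delta$, for otherwise the new set would not be a positive system. This is implicit in the definition of singular reflection. It is justified because the defined set must be a positive system, which forces $\pm2\alpha$ to lie on the same side as $\pm\alpha$; so the definition implicitly requires $2\alpha\notin\Delta$. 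If $2\alpha$ were a root, the definition should be read with $\{\alpha,2\alpha\}$ swapped, but for singular $\alpha$ we have $m_{2\alpha,0}=0$ as well. I expect that the structure of rank-one singular pairs excludes $2\alpha$, and I would cite this rather than reprove it.

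\emph{Regular case.} Here the plan has three steps.

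\begin{enumerate}
\item Show that $r_\alpha$ is realised by an element $w\in N_K(\a)$, using the isomorphism $N_K(\a)/C_K(\a)\cong W$. Because $w$ acts by automorphisms of $\g$ sending $\g_\beta$ to $\g_{r_\alpha\beta}$ with parity preserved, we get $m_{r_\alpha\beta}=m_\beta$ for all $\beta\in\Delta$.
\item Show that $r_\alpha$ permutes $\Delta^+_\Sigma\setminus\{\alpha,2\alpha\}$. Take $\beta$ in this set. Because $\alpha$ is simple and $\beta$ is not proportional to $\alpha$, $\beta$ has a positive coefficient on some simple root $\gamma\neq\alpha$. The reflection $r_\alpha\beta=\beta-c\alpha$ changes only the $\alpha$-coefficient, so the $\gamma$-coefficient stays positive and $r_\alpha\beta$ remains positive. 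It is also not a multiple of $\alpha$. We must also check that no other multiples $k\alpha$ occur in $\Delta^+$ besides $\alpha$ and $2\alpha$. This holds because the simple root $\alpha$ is indecomposable, so $\alpha/2\notin\Delta$, and the rank-one system is of type $\{\pm\alpha,\pm2\alpha\}$, as used in defining $\n(\alpha)$.
\item Show that $\Delta^+_{r_\alpha\Sigma}=r_\alpha\Delta^+_\Sigma$. This follows because $r_\alpha\Delta^+_\Sigma=\Delta^+_{\phi\circ r_\alpha}$ is a positive system whose indecomposables are $r_\alpha\Sigma$.
\end{enumerate}

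Given these steps, $\rho_{r_\alpha\Sigma}=\tfrac12\sum_{\beta\in\Delta^+_\Sigma}(m_{r_\alpha\beta,0}-m_{r_\alpha\beta,1})\,r_\alpha\beta$, and the equality of multiplicities from step 1 gives $r_\alpha(\rho_\Sigma)$ directly, so (1) follows.

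\emph{Main obstacle.} The delicate point is step 3, namely making sure that $r_\alpha\Sigma$, for regular $\alpha$, is interpreted as the simple system $\{r_\alpha\gamma:\gamma\in\Sigma\}$ and that this coincides with the base of $r_\alpha\Delta^+_\Sigma$. This holds because $r_\alpha$ is a linear automorphism of $\a^*$ preserving $\Delta$, so it maps indecomposables to indecomposables. Using this identification avoids having to compare the two sums term by term.
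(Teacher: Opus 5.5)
Your proposal is correct and is the direct computation the paper intends; the paper's proof reads only ``follows from direct computation''. In the singular case only the $\pm\alpha$ term of $\rho$ changes, and in the regular case $W$-invariance of the multiplicities together with $\Delta^+_{r_\alpha\Sigma}=r_\alpha\Delta^+_\Sigma$ gives $r_\alpha(\rho_\Sigma)$.

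The external input you defer, namely that $2\alpha\notin\Delta$ for a singular simple $\alpha$, is supplied by the rank-one classification: Lemma \ref{lemma classification rank one} has $m_{2\alpha}=(0|0)$ in both singular cases. This input is genuinely needed, not optional. If $2\alpha$ were a root, your alternative reading with $\{\alpha,2\alpha\}$ swapped would add an extra $2m_{2\alpha,1}\alpha$ and break the formula. So the remark that $m_{2\alpha,0}=0$ does not rescue that reading; the citation does.
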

\begin{proof}
    Follows from direct computation.
\end{proof}

\begin{lemma}\label{lemma local global rho}
    For a simple root $\alpha$, we have
    \[
    (\rho,\alpha)=(\rho_{\alpha},\alpha)=\frac{1}{2}(m_{\alpha,0}+2m_{2\alpha,0}-m_{\alpha,1})(\alpha,\alpha).
    \]
\end{lemma}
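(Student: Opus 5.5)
The proof has two parts. First I show that $(\rho,\alpha)=(\rho_\alpha,\alpha)$ by reducing to a reflection computation. Then I evaluate $(\rho_\alpha,\alpha)$ directly inside the rank-one pair.

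\textbf{Global equals local.} Write
\[
\rho_\Sigma=\rho_\alpha+\rho^{\alpha},\qquad \rho^{\alpha}:=\frac12\sum_{\beta\in\Delta^+\setminus\{\alpha,2\alpha\}}(m_{\beta,0}-m_{\beta,1})\beta .
\]
Here $\rho_\alpha=\frac12\big((m_{\alpha,0}-m_{\alpha,1})\alpha+(m_{2\alpha,0}-m_{2\alpha,1})2\alpha\big)$ collects the contributions of the positive roots of $\g(\alpha)$. The roots of $\g(\alpha)$ are the nonzero rational multiples of $\alpha$, and since $\alpha$ is simple the only positive ones are $\alpha$ and possibly $2\alpha$. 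It therefore suffices to show $(\rho^\alpha,\alpha)=0$.

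By \S\ref{subsec:rank one prelim}, $\n^\alpha$ is $\g(\alpha)$-stable, and in particular stable under $\k(\alpha)$ and $\a$. The set $\Delta^+\setminus\{\alpha,2\alpha\}$ is then stable under the reflection $r_\alpha$, and this reflection preserves multiplicities. For regular $\alpha$ this holds because $r_\alpha$ is realised by an element of $N_{K(\alpha)}(\a)$; the group $K(\alpha)$ acts on $\n^\alpha$, so multiplicities of both parities are preserved, and simplicity of $\alpha$ keeps these roots positive. It follows that $r_\alpha\rho^\alpha=\rho^\alpha$, hence $(\rho^\alpha,\alpha)=0$.

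For singular $\alpha$ there is no reflection, so I argue via Lemma \ref{lemma rho refl}(2) together with the invariant form. Either $\alpha$ is isotropic, in which case both sides vanish, or the rank-one algebra still produces a symmetry. Alternatively, for singular $\alpha$ I would compute $(\rho^\alpha,\alpha)$ through the supertrace of $\ad h_\alpha$ on $\n^\alpha$. The $\g(\alpha)$-module $\n^\alpha$ carries an invariant pairing with $\theta(\n^\alpha)$, and the $\g(\alpha)$-supertrace of $h_\alpha\in[\g(\alpha),\g(\alpha)]$ vanishes on any finite-dimensional module. This supertrace argument in fact handles both cases uniformly. Since $h_\alpha$ lies in the span of brackets $[\g_\alpha,\g_{-\alpha}]$ (using that $\g(\alpha)$ is Kac–Moody), we get
\[
\mathrm{str}_{\n^\alpha}(\ad h_\alpha)=\sum_\beta (m_{\beta,0}-m_{\beta,1})(\beta,\alpha)=2(\rho^\alpha,\alpha)=0.
\]
The step I expect to be the main obstacle is justifying $h_\alpha\in[\g(\alpha),\g(\alpha)]$ when $\alpha$ is singular and isotropic. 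If it fails, the isotropic case still works directly, since then $(\alpha,\alpha)=0$.

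\textbf{Rank-one evaluation.} Expanding the pairing gives
\[
(\rho_\alpha,\alpha)=\frac12\big(m_{\alpha,0}-m_{\alpha,1}+2m_{2\alpha,0}-2m_{2\alpha,1}\big)(\alpha,\alpha).
\]
To match the stated formula it remains to show $m_{2\alpha,1}=0$. This comes from the rank-one classification of supersymmetric pairs with Iwasawa decomposition, as in \cite{SSh}: an odd part in $\g_{2\alpha}$ never occurs. An alternative route is that $2\alpha$ being an odd restricted root would force $\alpha$ to be odd isotropic with $2\alpha$ a root, which is impossible. I would cite the rank-one list for this.
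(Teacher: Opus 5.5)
Your strategy is the paper's. You split $\rho=\rho_\alpha+\rho^\alpha$ and kill $(\rho^\alpha,\alpha)$ by $r_\alpha$-invariance when $\alpha$ is regular. For singular $\alpha$ you identify $2(\rho^\alpha,\alpha)$ with the supertrace of $h_\alpha$ on the $\g(\alpha)$-module $\n^\alpha$, which vanishes once $h_\alpha\in[\g(\alpha),\g(\alpha)]$.

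\textbf{The gap.} The singular case is not closed as written. The membership $h_\alpha\in[\g_\alpha,\g_{-\alpha}]$ is only gestured at. The fallback you offer for isotropic $\alpha$ is invalid. $(\alpha,\alpha)=0$ makes $(\rho_\alpha,\alpha)$ and the right-hand side vanish. But what must be shown is $(\rho^\alpha,\alpha)=0$, which concerns the pairing of $\alpha$ with the \emph{other} positive roots. That reasoning never uses that $\alpha$ is simple, and without simplicity the conclusion fails. For the diagonal pair of $\g\l(2|1)$ with the distinguished base, the non-simple isotropic root $\alpha=\varepsilon_1-\delta$ has $(\rho,\alpha)\neq0$. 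The remarks invoking Lemma \ref{lemma rho refl}(2) and ``the rank-one algebra still produces a symmetry'' do not supply an argument either.

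\textbf{How to close it.} You have two options.
\begin{enumerate}
\item Follow the paper, which uses Lemma \ref{lemma classification rank one}(2). There $\g_{\pm\alpha}$ generate either $\o\s\p(2|2n)$, which is perfect, or $\g\l(1|1)\times\g\l(1|1)$, where the membership is checked by hand.
\item Make your ``Kac--Moody'' remark precise. This gives a classification-free proof valid for every restricted root. Pick a root $\beta$ of $\h$ with $\beta|_{\a}=\alpha$. The invariant form pairs $\g_\beta$ nondegenerately with $\g_{-\beta}$, and $[\g_\beta,\g_{-\beta}]\sub\h$. Hence $[x,y]=(x,y)h_\beta$ for $x\in\g_\beta$, $y\in\g_{-\beta}$, so $h_\beta\in[\g_\alpha,\g_{-\alpha}]$. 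Since $\theta$ swaps $\g_\alpha$ and $\g_{-\alpha}$, also $\theta(h_\beta)\in[\g_\alpha,\g_{-\alpha}]$. Because $\a\perp\mathfrak{t}$, the $\a$-component $\tfrac12(h_\beta-\theta h_\beta)$ of $h_\beta$ represents $\beta|_{\a}=\alpha$, i.e.\ it equals $h_\alpha$. With this, your supertrace argument handles regular and singular roots uniformly, as you suspected.
\end{enumerate}

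\textbf{On the second equality.} You are right that it tacitly needs $m_{2\alpha,1}=0$; the paper simply calls it ``clear by definition''. Cite the rank-one table for this, and drop the unsupported ``alternative route''.
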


\begin{proof}
 The second equality is clear by definition.  Thus it suffices to check that $(\rho-\rho_{\alpha},\alpha)=0$.  If $\alpha$ is regular, then $r_{\alpha}$ preserves $\Delta^+\setminus(\Delta^+\cap\N\alpha)$.  Thus $r_{\alpha}(\rho-\rho_{\alpha})=\rho-\rho_{\alpha}$, and we have 
 \[
 (\rho-\rho_{\alpha},\alpha)=(r_{\alpha}(\rho-\rho_{\alpha}),r_{\alpha}(\alpha))=-(\rho-\rho_{\alpha},\alpha),
 \]
 and thus $(\rho-\rho_{\alpha},\alpha)=0.$
 
 Now let us deal with the case when $\alpha$ is singular.  For this, recall that $\g(\alpha)$ acts on $\n^{\alpha}$ (\ref{eqn nalpha}).  Let $h_{\alpha}\in\a$ denote the coroot of $\alpha$, so that $(\lambda,\alpha)=\lambda(h_{\alpha})$ for all $\lambda\in\a^*$.  Then $h_{\alpha}\in\g(\alpha)$, and $(\rho-\rho_\alpha,\alpha)$ is exactly the supertrace of $h_{\alpha}$ acting on $\n^{\alpha}$. It therefore suffices to prove that $h_{\alpha}$ lies in the derived subalgebra of $\g(\alpha)$.  In fact, it suffices to prove $h_{\alpha}$ lies in $[\g_{\alpha},\g_{-\alpha}]$.  However by the classification of rank-one pairs with a singular simple root ((2) of Lemma \ref{lemma classification rank one}), we see that the subalgebra generated by $\g_{\alpha}$ and $\g_{-\alpha}$ is either $\o\s\p(2|2n)$ or $\g\l(1|1)\times\g\l(1|1)$.  In the first case $\o\s\p(2|2n)$ is its own derived subalgebra, and the second case our claim can be checked directly.

\end{proof}

\subsection{Odd roots and the pure Weyl vector}
\begin{definition}
    We say that $\alpha\in\Delta$ is \emph{odd} if it is the restriction of an odd root of $\g$, and write $\Delta_{odd}$ for the subset of odd roots.  We set $\Delta_{ev}:=\Delta\setminus\Delta_{odd}$, and call elements of $\Delta_{ev}$ \emph{purely even} roots.  Finally, write $\Delta_{reg,odd}\sub\Delta$ for the set of roots that are simultaneously odd and regular.
\end{definition}

Notice that if $\alpha$ is purely even, then $\g_{\alpha}$ must be a purely even vector space.  However it need not be true that $\g_{\alpha}$ is purely odd whenever $\alpha$ is odd.

\begin{definition}
    We say that the restricted root system $\Delta\sub\a^*$ is \emph{pure} if for all $\alpha\in\Delta$, $m_{\alpha,0}m_{\alpha,1}=0$.
\end{definition}

\begin{definition}
    For a base $\Sigma\sub\Delta$, define the \emph{pure} Weyl vector $\rho^{\text{pure}}_{\Sigma}$ of $(\g,\k)$ to be:
    \[
    \rho^{\text{pure}}_{\Sigma}=\frac{1}{2}\sum\limits_{\alpha\in\Delta_{ev}^+}m_{\alpha,0}\alpha-\frac{1}{2}\sum\limits_{\alpha\in\Delta_{odd}^+}m_{\alpha,1}\alpha.
    \]
\end{definition}
Notice that $\rho^{\text{pure}}_{\Sigma}$ is the Weyl vector of the pure restricted root system obtained from $\Delta$ by setting $m_{\alpha,0}=0$ for all $\alpha\in\Delta_{odd}$.

\begin{lemma}\label{lemma pure rho}
    Let $\Sigma\sub\Delta$ be a chosen base, and let $\alpha\in\Sigma$.
    \begin{enumerate}
        \item If $\alpha$ is regular, then 
        \[
        \rho_{r_{\alpha}\Sigma}^{\text{pure}}=r_{\alpha}(\rho^{\text{pure}}_{\Sigma}).
        \]
        \item If $\alpha$ is singular, then 
        \[
        \rho_{r_{\alpha}\Sigma}^{\text{pure}}=\rho^{\text{pure}}_{\Sigma}+m_{\alpha,1}\alpha.
        \]
    \end{enumerate}
\end{lemma}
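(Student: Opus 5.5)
The plan is the same direct computation that gives Lemma \ref{lemma rho refl}, now applied to the pure root system. As noted after the definition, $\rho^{\text{pure}}_{\Sigma}$ is the Weyl vector of the pure restricted root system $\Delta^{\text{pure}}$. This system has the same underlying set of roots $\Delta$, with multiplicities $m'_{\beta}=(m_{\beta,0}|0)$ for $\beta\in\Delta_{ev}$ and $m'_{\beta}=(0|m_{\beta,1})$ for $\beta\in\Delta_{odd}$. The positive systems $\Delta^+_{\Sigma}$ and $\Delta^+_{r_\alpha\Sigma}$ are subsets of $\Delta$ and do not depend on the multiplicities. So I will compare the two sums term by term, writing $\epsilon_\beta\mu_\beta$ for the coefficient of $\frac12\beta$. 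Here $\epsilon_\beta=1$, $\mu_\beta=m_{\beta,0}$ if $\beta$ is purely even, and $\epsilon_\beta=-1$, $\mu_\beta=m_{\beta,1}$ if $\beta$ is odd.

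\textbf{Regular case.} The positive system of $r_\alpha\Sigma$ is $r_\alpha(\Delta^+_\Sigma)$. Indeed, $r_\alpha$ is induced by an element of $N_K(\a)$ (second part of the little Weyl group lemma), so it permutes $\Delta$ and maps a base to a base. The standard argument shows that $r_\alpha$ permutes $\Delta^+\setminus\N\alpha$ and sends $\alpha,2\alpha$ to their negatives. The same element of $N_K(\a)$ maps $\g_\beta$ isomorphically onto $\g_{r_\alpha\beta}$ as super vector spaces, so multiplicities are preserved. It also maps odd roots of $\g$ to odd roots, hence $\Delta_{odd}$ to $\Delta_{odd}$. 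Therefore the coefficient $\epsilon_\beta\mu_\beta$ is $r_\alpha$-invariant. Reindexing the sum defining $\rho^{\text{pure}}_{r_\alpha\Sigma}$ by $\beta\mapsto r_\alpha\beta$ then gives $r_\alpha(\rho^{\text{pure}}_\Sigma)$.

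\textbf{Singular case.} The two positive systems differ only in that $\alpha$ is replaced by $-\alpha$. I have to check that $2\alpha$ is not involved. For a singular simple root, $2\alpha\notin\Delta$: by Lemma \ref{lemma classification rank one}(2), the rank-one algebra is $\o\s\p(2|2n)$ or $\g\l(1|1)\times\g\l(1|1)$, and in both the restricted roots are only $\pm\alpha$. If that classification does not settle it, the definition of $r_\alpha\Sigma$ (remove only $\alpha$) is the relevant one anyway. Next, $\alpha$ is odd, since $\g_\alpha$ is purely odd ($m_{\alpha,0}=0$) and nonzero. Also $-\alpha$ is odd with $m_{-\alpha,1}=m_{\alpha,1}$, because $\theta$ maps $\g_\alpha$ to $\g_{-\alpha}$ preserving parity. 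Hence
\[
\rho^{\text{pure}}_{r_\alpha\Sigma}-\rho^{\text{pure}}_\Sigma=-\tfrac12 m_{\alpha,1}(-\alpha)+\tfrac12 m_{\alpha,1}\alpha=m_{\alpha,1}\alpha .
\]

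\textbf{Main obstacle.} The only nontrivial points are bookkeeping: that the oddness of restricted roots is preserved by $W$ and by $\beta\mapsto-\beta$, and that $2\alpha\notin\Delta$ in the singular case. Both follow from the $N_K(\a)$-realization of $W$ and from $\theta$, together with the rank-one classification.
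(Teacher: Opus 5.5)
Your proposal is correct and is the direct computation the paper intends (the paper simply says the lemma follows from direct computation). In the regular case you reindex the defining sum over $r_\alpha(\Delta^+_\Sigma)$ using that $W$ preserves multiplicities and oddness, and in the singular case only the $\alpha$-term flips sign, which gives $m_{\alpha,1}\alpha$. One small imprecision is harmless: an element of $N_K(\a)$ need not normalize $\h$, so it is cleaner to note that $\beta$ is odd exactly when $m_{\beta,1}>0$, and this is preserved because the action on $\g$ is parity-preserving.
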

\begin{proof}
    Follows from direct computation.
\end{proof}

\subsection{Conjugacy properties of simple root systems}  

    The following is \cite[Lem.~2.22]{Sh3}.

    \begin{lemma}\label{lemma all Sigma conj}
        Any two simple root systems $\Sigma,\Sigma'$ may be obtained from one another by a sequence of singular reflections and conjugation by $W$. In particular, $|\Sigma|=|\Sigma'|$.  
    \end{lemma}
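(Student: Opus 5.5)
The plan is to connect any two positive systems by a path, in the spirit of the classical argument that the Weyl group acts simply transitively on Weyl chambers. Fix $\Sigma$ and $\Sigma'$ with positive systems $\Delta^+=\Delta^+_\Sigma$ and $\Delta'^+=\Delta^+_{\Sigma'}$. Induct on
\[
d(\Sigma,\Sigma'):=|\Delta^+\cap(-\Delta'^+)|,
\]
the number of roots on which the two systems disagree. If $d=0$ the positive systems coincide, and so $\Sigma=\Sigma'$ by the bijection between positive and simple systems.

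Suppose $d>0$. The first step is to find a simple root $\alpha\in\Sigma$ with $\alpha\in-\Delta'^+$. If every simple root lay in $\Delta'^+$, then every element of $\Delta^+\subseteq\sum_{\beta\in\Sigma}\N\beta$ would be a nonnegative combination of elements of $\Delta'^+$. Writing $\Delta'^+=\Delta^+_{\phi'}$, each such element has $\phi'>0$, so $\Delta^+\subseteq\Delta'^+$. Since both sets have the same cardinality $|\Delta|/2$, this would force $d=0$.

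So pick $\alpha\in\Sigma\cap(-\Delta'^+)$. We may assume $\alpha$ is indivisible, because if $\alpha\in\Sigma$ then $\alpha/2\notin\Delta^+$. Now split into two cases.

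\emph{Case 1: $\alpha$ is singular.} Use the singular reflection $r_\alpha\Sigma$, whose positive system is $(\Delta^+\setminus\{\alpha\})\cup\{-\alpha\}$. Here one should check that $2\alpha$ is not a root. This follows from the rank-one classification: the singular rank-one pairs have only $\pm\alpha$ as roots, or one can argue directly from singularity. Granting this, $d$ drops by exactly one.

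\emph{Case 2: $\alpha$ is regular.} Use $r_\alpha\in W$. Since $r_\alpha$ is induced by $N_K(\a)$, it permutes $\Delta$ and preserves multiplicities, so $r_\alpha\Sigma$ is a base with positive system $r_\alpha(\Delta^+)$. The standard argument shows that $r_\alpha$ permutes $\Delta^+\setminus(\Q\alpha\cap\Delta)$: a root $\beta\neq$ multiple of $\alpha$ has some other simple root with positive coefficient, and $r_\alpha$ only changes the $\alpha$-coefficient. Hence $r_\alpha(\Delta^+)$ differs from $\Delta^+$ only on $\{\alpha,2\alpha\}$, where the signs flip. Since $\alpha,2\alpha\in-\Delta'^+$, the quantity $d$ strictly decreases.

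In either case the induction hypothesis connects the new base to $\Sigma'$ by singular reflections and $W$-conjugations, and prepending the step just taken connects $\Sigma$ to $\Sigma'$. The claim $|\Sigma|=|\Sigma'|$ then follows, since both operations preserve cardinality. Alternatively, each $\Sigma$ is a basis of the span of $\Delta$, which already gives this directly.

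The main obstacle I expect is the singular case: confirming that $r_\alpha\Sigma$ is well defined, meaning $(\Delta^+\setminus\{\alpha\})\cup\{-\alpha\}$ is again of the form $\Delta^+_\phi$, and that no other multiple of $\alpha$ is affected. For the first point I would take $\phi$ defining $\Delta^+$ and replace it by $\phi-t\,\alpha^\vee$-type functional. Concretely, I would choose a functional that agrees with $\phi$ on $\Sigma\setminus\{\alpha\}$, scaled large, and is negative and small on $\alpha$, extended linearly over the basis $\Sigma$. Every positive root other than $\alpha$ (and $2\alpha$, which does not occur) involves some other simple root, so it stays positive when the other simple roots are weighted heavily.
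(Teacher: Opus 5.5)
Your proof is correct. The paper gives no argument of its own here; it imports the statement from \cite[Lem.~2.22]{Sh3}, so your write-up is a self-contained replacement for a citation rather than a different take on an in-text proof.

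Your induction on $d(\Sigma,\Sigma')=|\Delta^+_\Sigma\cap(-\Delta^+_{\Sigma'})|$ works as stated. If $d>0$, some $\alpha\in\Sigma$ has $\phi'(\alpha)<0$.
\begin{itemize}
\item If $\alpha$ is regular, $r_\alpha\in W$ preserves $\Delta$, since it is induced by $N_K(\a)$. It changes only the $\alpha$-coefficient in the $\Sigma$-expansion, so it permutes $\Delta^+_\Sigma\setminus\Q\alpha$, and $d$ drops by $|\{\alpha,2\alpha\}\cap\Delta|\geq 1$.
\item If $\alpha$ is singular, Lemma \ref{lemma classification rank one}(2) gives $\Delta\cap\Q\alpha=\{\pm\alpha\}$. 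Your functional, equal to $N\phi$ on $\Sigma\setminus\{\alpha\}$ and to $-1$ on $\alpha$, extends using that $\Sigma$ is a $\Z$-basis of $\Z\Delta$. It shows that $(\Delta^+_\Sigma\setminus\{\alpha\})\cup\{-\alpha\}$ is a genuine positive system, so $d$ drops by exactly one.
\end{itemize}

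Compared with the bare citation, your route makes the inputs explicit. Only three facts are used: $W$-stability of $\Delta$, the rank-one classification for singular simple roots, and the bijection between positive and simple systems. Along the way you also verify that singular reflections are well defined, which the definition in Section \ref{sec reflections} tacitly assumes. The citation, in exchange, keeps the paper short and stays within the framework of \cite{Sh3}.

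Three small points:
\begin{itemize}
\item In the singular case you must exclude every multiple $k\alpha$ with $k\geq 2$, not only $2\alpha$. The classification does this, since $\g(\alpha)$ contains all $\g_{k\alpha}$.
\item The ``direct argument from singularity'' you allude to is neither given nor needed; rely on the classification.
\item For $|\Sigma|=|\Sigma'|$, use your second argument, that every base is a basis of the span of $\Delta$. That is also what justifies your claim that a singular reflection preserves the size of the base.
\end{itemize}
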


    \begin{definition}
    The rank of $(\g,\k)$ is defined to be the cardinality of any simple root system, which is well-defined by Lemma \ref{lemma all Sigma conj}.
\end{definition}


    Write $\Delta_{reg}\sub\Delta$ for the set of regular roots, and $\Delta_{sing}$ for the set of singular roots.  Then $\Delta_{reg}$ forms a (potentially non-reduced) root system in the classical sense.   For a base $\Sigma\sub\Delta$, let $\Pi\sub\Delta_{reg}^+$ denote the simple roots of $\Delta_{reg}^+$.  Then the simple reflections in elements of $\Pi$ generate the Weyl group $W$.  Write $W'$ for the subgroup of $W$ generated by the simple reflections $r_{\alpha}$ in purely even roots $\alpha\in\Pi$.  

    \begin{definition}\label{defn admissible roots}
        We say that a root $\alpha\in\Delta_{\Sigma}^+$ is \emph{admissible} with respect to $\Sigma$ if $\alpha$ becomes simple after application of some number of singular reflections to $\Sigma$.  
    \end{definition}
    Write $\Delta_{\Sigma,ad}\sub\Delta_{\Sigma}^+$ for the set of admissible roots.  Note that for $w\in W$, we have $w(\Delta_{\Sigma,ad})=\Delta_{w\Sigma,ad}$.

    \begin{lemma}\label{lemma conj props}
        \begin{enumerate}
            \item We have $\Pi\sub\Delta_{\Sigma,ad}$.
            \item We have $\Delta_{reg,odd}^+\sub W'(\Pi\cap\Delta_{reg,odd})$.
            \item $\Delta_{sing}\sub\pm W(\Delta_{\Sigma,ad,sing})$. 
        \end{enumerate}
    \end{lemma}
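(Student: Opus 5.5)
Throughout I use two basic facts: any $w\in W$ maps a base $\Sigma$ to a base $w\Sigma$ with $w(\Delta_{\Sigma,ad})=\Delta_{w\Sigma,ad}$; and a singular reflection $r_\beta$, for $\beta\in\Sigma$ singular, changes $\Delta^+_\Sigma$ only in $\pm\beta$ (and in $\pm2\beta$ if $2\beta$ is a root). The three parts are then proved in order, each using the previous.

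\emph{Part (1).} Fix $\gamma\in\Pi$. I will induct on the height of $\gamma$ with respect to $\Sigma$. If $\gamma\in\Sigma$ there is nothing to prove. Otherwise write $\gamma=\sum_{\beta\in\Sigma}c_\beta\beta$ with $c_\beta\in\N$. Since $\gamma$ is simple in $\Delta_{reg}^+$, no regular simple root $\beta\in\Sigma$ satisfies $\gamma-\beta\in\Delta^+$: otherwise $\gamma$ would decompose in $\Delta_{reg}^+$, after a short check that $\gamma-\beta$ is then regular or a multiple argument applies. Pairing $\gamma$ against $\gamma$, with $(\gamma,\gamma)>0$ as $\gamma$ is regular, forces some $\beta\in\Sigma$ with $c_\beta>0$ and $(\gamma,\beta)>0$.

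My first attempt is to show that such a $\beta$ can be chosen singular. Then $\gamma$ remains positive in $r_\beta\Sigma$, since it is not a multiple of $\beta$, and its height drops. This is because the new simple roots are $-\beta$ together with $\beta'$ or $\beta'+\beta$ (or $\beta'+2\beta$), so the coefficients get rewritten. Induction then finishes the argument.

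\textbf{The main obstacle} is making this height decrease rigorous. The cleanest route may be the following: choose $\phi$ defining $\Delta^+_\Sigma$, and deform it toward the wall $\ker\phi\supseteq\{\gamma\}^\perp$-type direction, so that $\gamma$ becomes minimal among regular positive roots. Along the deformation only singular roots change sign, since regular ones keep sign because $\gamma\in\Pi$. Each sign change is a singular reflection, by the bijection between bases and positive systems. At the end $\gamma$ is indecomposable, hence simple.

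\emph{Part (2).} Let $\gamma\in\Delta^+_{reg,odd}$. By classical root system theory, $\gamma=w\delta$ for some $\delta\in\Pi$ and $w\in W$. The group $W$ is generated by the reflections in $\Pi$. Reflections in purely even simple roots $\alpha$ permute $\Delta^+_{reg,odd}$ apart from $\alpha$ itself, and in fact preserve oddness, since an even Weyl group element preserves odd roots of $\g$. So I use the standard reduction: apply simple reflections in $\Pi$ that decrease the height of $\gamma$. I will argue that one never needs to reflect by an odd regular simple root $\alpha$ unless $\gamma$ is proportional to $\alpha$. This should follow from the structure of the rank-two subsystems spanned by $\gamma$ and $\alpha$, where an odd regular root is a restriction of an odd root and the multiplicity patterns are constrained. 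I expect to need a small case check in the classification of regular odd rank-one pieces.

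\emph{Part (3).} Let $\gamma$ be singular; replacing $\gamma$ by $-\gamma$ if necessary, take it positive. Use the deformation argument from (1), now applied to $\phi$ with the aim of making $\gamma$ simple. I allow crossings of regular walls, which are realized by applying $W$, and singular walls, which are singular reflections. By Lemma \ref{lemma all Sigma conj} this produces a base $w\Sigma''$ containing $\gamma$, where $\Sigma''$ is obtained from $\Sigma$ by singular reflections. The key point is to reorder the moves: a regular reflection followed by singular reflections equals singular reflections of a $W$-translate, because $w r_\beta\Sigma=r_{w\beta}w\Sigma$. After reordering, $w^{-1}\gamma$ is simple in a base obtained from $\Sigma$ by singular reflections, so $w^{-1}\gamma\in\pm\Delta_{\Sigma,ad,sing}$. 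This gives $\gamma\in\pm W(\Delta_{\Sigma,ad,sing})$.
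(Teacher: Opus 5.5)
\textbf{The gap is in (2).} Your height reduction in the classical root system $\Delta_{reg}$ proves (2) only if, at every stage, a non-simple $\gamma\in\Delta^+_{reg,odd}$ admits a \emph{purely even} $\alpha\in\Pi$ with $\langle\gamma,\alpha^\vee\rangle>0$. This needs two facts.
\begin{enumerate}
\item Two non-proportional odd regular roots are orthogonal (Corollary~\ref{cor reg odd roots ortho}).
\item $2\alpha$ is never odd when $\alpha$ is odd regular. Otherwise your exempted proportional case $\gamma=2\alpha$ would obstruct the argument.
\end{enumerate}
The second fact is rank-one and can be read off Lemma~\ref{lemma classification rank one}; the first cannot. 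It concerns the relative position of two different odd regular roots. Rank-one data tells you which single roots can be odd regular and with which multiplicities, but nothing about the angle between two of them. Nothing local rules out, say, odd regular roots forming an $A_2$.

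So the step you postpone (``I will argue\ldots should follow from\ldots a small case check in the classification of regular odd rank-one pieces'') is the entire content of (2), and the proposed case check is the wrong one. The paper gets it from the global classification of supersymmetric pairs. The only restricted root systems with odd regular roots are subsystems of $BC(r,s)$ containing $A(r,s)$, and the odd regular roots are exactly the short roots of the $BC_r$ and $BC_s$ components. Hence $S_r\times S_s\sub W'$ acts transitively up to sign on the odd regular roots of each component, and $\Pi$ contains a short root of each component. Corollary~\ref{cor reg odd roots ortho} comes out of the same inspection. With that input imported, your height reduction is a fine alternative to the paper's orbit count; without it, (2) is unproved.

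Parts (1) and (3) are sound in outline. Your (3) is the paper's argument: put $\gamma$ in some base, connect that base to $\Sigma$ (Lemma~\ref{lemma all Sigma conj}), and move the Weyl group elements to the front using $w\,r_\beta\Sigma=r_{w\beta}\,w\Sigma$. If $w^{-1}\gamma$ is negative for $\Sigma$, apply one more singular reflection in it.

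For (1), where the paper just cites earlier work, discard the height-induction attempt. The form on $\a^*$ is indefinite, so regular roots can have $(\gamma,\gamma)<0$ (Example~\ref{osp example}). The wall-crossing argument does work, but you need to state it correctly:
\begin{enumerate}
\item The endpoint $\phi'$ must lie in the regular chamber near a generic point of its face $\{\phi(\gamma)=0\}$, so that $\phi'(\gamma)<|\phi'(\beta)|$ for \emph{every} root $\beta$ not proportional to $\gamma$. Minimality among regular roots is not enough: for example $\varepsilon_1-\varepsilon_2=(\varepsilon_1-\delta_1)+(\delta_1-\varepsilon_2)$ in Example~\ref{osp example}.
\item The path should be generic, so that walls are crossed one at a time.
\item The root $\beta$ changing sign is simple just before its wall, since its $\phi_t$-value is arbitrarily small while all others stay bounded away from $0$. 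Moreover $2\beta\notin\Delta$ by Lemma~\ref{lemma classification rank one}. Only then is the crossing precisely the singular reflection $r_\beta$; the bijection between bases and positive systems alone does not give this.
\end{enumerate}
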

    \begin{proof}
        (1) is proven in \cite[Lem.~2.24]{Sh3}.  For (3), let $\alpha\in\Delta_{sing}$.  We may use the proof of \cite[Lem.~2.23]{Sh3} to show that $\alpha$ lies in some base $\Sigma'$.  By Lemma \ref{lemma all Sigma conj}, $\Sigma$ may be obtained from $\Sigma'$ by a sequence of simple reflections $r_{\alpha_1},\dots,r_{\alpha_k}$.   Write $w\in W$ for the composition of the reflections $r_{\alpha_i}$ where $\alpha_i$ is regular. Then we obtain that $\pm w\alpha\in\Delta_{\Sigma,ad}$.  


        For (2), we apply the classification of supersymmetric pairs, given in table of \cite[\S 5]{Sh4} and relying on \cite{S} and the appendix of \cite{Sh4}.  By inspection, the only restricted root systems with odd regular roots are subsystems of $BC(r,s)$ which contain $A(r,s)$, and for which the odd regular roots are the short roots of $BC_r$ and $BC_s$.  Thus we have $S_r\times S_s\sub W'$, and in both $BC_r$ and $BC_s$ there is a unique $W'$-orbit on the odd regular roots, up to sign.  By \cite[Lem.~2.23]{Sh3}, we obtain that $\Pi\cap\Delta_{reg,odd}$ must contain a short root in each component, and from this we obtain (2). 

    \end{proof}

    We note a corollary of the above proof that we use later:
    \begin{cor}\label{cor reg odd roots ortho}
        If $\alpha,\beta\in\Delta_{reg,odd}$, then either $\alpha=\pm\beta$ or $(\alpha,\beta)=0$.
    \end{cor}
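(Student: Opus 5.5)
The corollary can be read off from the classification input used in the proof of part (2) of Lemma \ref{lemma conj props}. By inspection of the table in \cite[\S 5]{Sh4}, the only restricted root systems with odd regular roots are subsystems of $BC(r,s)$ that contain $A(r,s)$. In such a system the odd regular roots are exactly the short roots of the $BC_r$ and $BC_s$ components. In the standard realization, $\a^*$ has an orthogonal basis $\epsilon_1,\dots,\epsilon_r,\delta_1,\dots,\delta_s$. The short roots of $BC_r$ are then $\pm\epsilon_i$, and those of $BC_s$ are $\pm\delta_j$.

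The plan is as follows.

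\emph{Step 1: locate the roots.} Let $\alpha,\beta\in\Delta_{reg,odd}$. By the classification, each of them has the form $\pm\epsilon_i$ or $\pm\delta_j$.

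\emph{Step 2: compute the pairing.} The form on $\a^*$ is the one induced from the invariant form on $\g$. In the $BC(r,s)$ realization this form is diagonal in the basis $\{\epsilon_i,\delta_j\}$, with $(\epsilon_i,\epsilon_{i'})=0$ for $i\neq i'$, $(\delta_j,\delta_{j'})=0$ for $j\neq j'$, and $(\epsilon_i,\delta_j)=0$. So if $\alpha$ and $\beta$ come from different basis vectors, then $(\alpha,\beta)=0$. If they come from the same basis vector, then $\alpha=\pm\beta$.

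\emph{Step 3: check the rank-one cases.} When $r$ or $s$ is small, and in the cases where $\Delta$ is a proper subsystem of $BC(r,s)$, I would check that the same description of the odd regular roots still holds. This amounts to reading the multiplicities off the table; passing to a subsystem cannot create new odd regular roots.

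The only real point is Step 2: one must confirm that the basis $\epsilon_i,\delta_j$ appearing in the table is orthogonal for the form restricted from $\g$, and not merely for some abstract form on $BC(r,s)$. I would verify this by comparing with how the table is constructed, namely $\a$ sitting inside a Cartan subalgebra of $\g\l(m|n)$ or $\o\s\p(m|2n)$, where the diagonal coordinate functions are orthogonal for the supertrace form.
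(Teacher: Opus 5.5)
Your proposal is correct and matches the paper's argument. The paper also reads the corollary off the classification input from the proof of Lemma \ref{lemma conj props}(2): the odd regular roots lie among the short roots $\pm\epsilon_i$, $\pm\delta_j$ of the $BC_r$ and $BC_s$ components, and these are pairwise orthogonal unless equal up to sign. Your Step 2 check, that this basis is orthogonal for the form induced from $\g$ (with $\a^*$ sitting isometrically in $\h^*$ as the $(-1)$-eigenspace of $\theta$), is a detail the paper leaves implicit, and it does hold.
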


\begin{example}\label{osp example}
    Consider the case $\g=\o\s\p(4+a|8+2b)$, $\k=\o\s\p(2|4)\times\o\s\p(2+a|4+2b)$. Then we may give $\a$ a basis $\varepsilon_1,\varepsilon_2,\delta_1,\delta_2$ such that $(\varepsilon_i,\varepsilon_i)=-2(\delta_i,\delta_i)$. The roots with their multiplicities may be described as follows:
    \begin{center}
\begin{tabular}{c|c|c|c|c|c|c}

{Root} & $\pm\varepsilon_1\pm\varepsilon_2$ & $\pm\varepsilon_1,\pm\varepsilon_2$ & $\pm\delta_1\pm\delta_2$ & $\pm\delta_1,\pm\delta_2$ & $\pm2\delta_1,\pm2\delta_2$ & $\pm\varepsilon_i\pm\delta_j$ \\
\hline
{Multiplicity} & $(1|0)$ & $(a|2b)$ & $(4|0)$ & $(4b|2a)$ & $(3|0)$ & $(0|2)$ \\

\end{tabular}
\end{center}
    Suppose that $a,b\ne 0$, and consider the standard choice of simple roots, namely $\Sigma=\{\varepsilon_1-\varepsilon_2,\varepsilon_2-\delta_1,\delta_1-\delta_2,\delta_2\}$ . Then
    \begin{align*}
        \Delta^+_{ev}&=\{\varepsilon_1\pm\varepsilon_2,\delta_1\pm\delta_2,2\delta_1, 2\delta_2\}\\
        \Delta^+_{sing} &=\{ \varepsilon_i\pm\delta_j \mid i,j=1,2\}\\
        \Delta^+_{reg}&=\{\varepsilon_1\pm\varepsilon_2,\varepsilon_1,\varepsilon_2,
        \delta_1\pm\delta_2,\delta_1,\delta_2,2\delta_1,2\delta_2\}\\
        \Delta^+_{reg,odd}&=\{\varepsilon_1,\varepsilon_2,\delta_1,\delta_2\}\\
        \Delta_{\Sigma,ad}&=\{\varepsilon_1-\varepsilon_2,\delta_1-\delta_2,\varepsilon_2,\delta_2\}\cup\Delta_{sing}\\
        \Pi&=\{\varepsilon_1-\varepsilon_2,\varepsilon_2,\delta_1-\delta_2,\delta_2\}\\
        W'&=\langle r_{\varepsilon_1-\varepsilon_2},r_{\delta_1-\delta_2}\rangle.\\
    \end{align*}
    Note that $\Delta^+_{reg,odd}$ is a $W'$-invariant set and that $\rho_\Sigma-\rho_{\Sigma}^{pure}=\frac{a}{2}(\varepsilon_1+\varepsilon_2)+2b(\delta_1+\delta_2)$, which is also $W'$-invariant. 
\end{example}

\subsection{Dot action}\label{section actions} Given $w\in W$ and $f\in S(\a)$,  we define the `dot' action, or $\rho$-shifted action, by:
\[
(w_{\smallbullet}f)(\lambda):=f(w(\lambda+\rho_{\Sigma})-\rho_{\Sigma}).
\]
It is important to note that the above dot action depends on $\Sigma$.  We will make this dependence explicit whenever it is not clear from context.  We will say $f\in S(\a)$ is $W_{\smallbullet}$-invariant (resp.~$W'_{\smallbullet}$-invariant) if $w_{\smallbullet}f=f$ for all $w\in W$ (resp.~$w\in W'$).

For $\lambda\in\a^*$, write $t_{\lambda}:\a^*\to\a^*$ for the translation morphism $t_{\lambda}(\mu)=\mu+\lambda$.  This induces an algebra automorphism $t_{\lambda}^*:S(\a)\to S(\a)$ given by 
\[
t_{\lambda}^*(f)(\mu)=f(t_{\lambda}^{-1}(\mu))=f(\mu-\lambda).
\]

\begin{lemma}\label{lemma invce under translation}
    Let $\Sigma\sub\Delta$ be a base, choose $\beta\in\Delta_{reg}$, and let $\alpha\in\Sigma$ be a singular root.  If $f\in S(\a)$ is invariant under the $\rho_{r_{\alpha}\Sigma}$-shifted action of $r_{\beta}$, then $t_{m_{\alpha,1}\alpha}^*(f)$ is invariant under the $\rho_{\Sigma}$-shifted action of $r_{\beta}$. 
\end{lemma}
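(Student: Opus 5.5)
The claim is a direct change of variables, and I expect it to need nothing beyond Lemma \ref{lemma rho refl}(2). Write $\rho=\rho_{\Sigma}$, $\rho'=\rho_{r_{\alpha}\Sigma}$ and $m=m_{\alpha,1}$. Since $\alpha\in\Sigma$ is singular, Lemma \ref{lemma rho refl}(2) gives $\rho'=\rho+m\alpha$. Set $g:=t_{m\alpha}^*(f)$, so that $g(\mu)=f(\mu-m\alpha)$ for all $\mu\in\a^*$. The goal is to show that $g(r_\beta(\mu+\rho)-\rho)=g(\mu)$ for every $\mu\in\a^*$.

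First unwind the left-hand side using the definition of $t^*_{m\alpha}$ and the identity $\rho+m\alpha=\rho'$:
\[
g(r_\beta(\mu+\rho)-\rho)=f(r_\beta(\mu+\rho)-\rho-m\alpha)=f(r_\beta(\mu+\rho)-\rho').
\]
Next apply the hypothesis that $f$ is invariant under the $\rho'$-shifted action of $r_\beta$, namely $f(r_\beta(\nu+\rho')-\rho')=f(\nu)$ for all $\nu\in\a^*$. Take $\nu:=\mu-m\alpha$, so that $\nu+\rho'=\mu+\rho$. This gives
\[
f(r_\beta(\mu+\rho)-\rho')=f(\mu-m\alpha)=g(\mu).
\]
Combining the two displays yields $(r_{\beta})_{\smallbullet}g=g$ for the $\rho_\Sigma$-shifted action, which is the claim.

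There is no real obstacle. The only input is the relation $\rho_{r_\alpha\Sigma}=\rho_\Sigma+m_{\alpha,1}\alpha$, and one must keep the sign conventions straight: $t_\lambda^*f(\mu)=f(\mu-\lambda)$, and the dot action is $f(w(\lambda+\rho)-\rho)$. Beyond being a reflection in $GL(\a^*)$, no property of $\beta$ is used, in particular not whether $\beta$ is orthogonal to $\alpha$.
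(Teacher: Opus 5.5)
Your proposal is correct and is essentially the paper's proof: both use only $\rho_{r_{\alpha}\Sigma}=\rho_{\Sigma}+m_{\alpha,1}\alpha$ from Lemma \ref{lemma rho refl}(2). Both then apply the hypothesis at the shifted point $\mu-m_{\alpha,1}\alpha$, so that $\nu+\rho_{r_\alpha\Sigma}=\mu+\rho_\Sigma$. You are also right that nothing about $\beta$ is used beyond $r_\beta$ being a linear map of $\a^*$.
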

\begin{proof}
By assumption, 
\[
f(r_{\beta}(\lambda+\rho_{r_{\alpha}\Sigma})-\rho_{r_{\alpha}\Sigma})=f(\lambda).
\]
It follows that:
\begin{eqnarray*}
t_{m_{\alpha,1}\alpha}^*(f)(r_{\beta}(\lambda+\rho_{\Sigma})-\rho_{\Sigma})& = &f(r_{\beta}((\lambda-m_{\alpha,1}\alpha)+\rho_{\Sigma}+m_{\alpha,1}\alpha)-\rho_{\Sigma}-m_{\alpha,1}\alpha)\\
& = & f(r_{\beta}(t_{-m_{\alpha,1}\alpha}(\lambda)+\rho_{r_{\alpha}\Sigma})-\rho_{r_{\alpha}\Sigma})\\
& = & t_{m_{\alpha,1}\alpha}^*(f)(\lambda).
\end{eqnarray*}
\end{proof}

\subsection{Interlacing pairs}
We now make an aside about interlacing pairs.  Let $\k'=\k_{\ol{0}}\oplus\p_{\ol{1}}$; then $(\g,\k')$ forms a supersymmetric pair arising from the involution $\theta\circ\delta$, where $\delta(v)=(-1)^{\ol{v}}v$.

\begin{definition}
    We say that $(\g,\k)$ is interlacing if there exists $a\in\a$ such that $Ad(\exp(a))(\k)=\k'$.
\end{definition}

We see that if $(\g,\k)$ is interlacing then so is $(\g(\alpha),\k(\alpha))$ for all $\alpha\in\Delta$.   For the reader's benefit, we include here a table of all (indecomposable) interlacing pairs.



\begin{center}
    \begin{tabular}{|c|c|}
    \hline 
        $\g$                 & $\k$      \\ \hline\hline
        $\stackrel{\circ}{\g}\times\stackrel{\circ}{\g}$ & $\stackrel{\circ}{\g}$ simple Kac--Moody  \\
        \hline 
        $\g\l(2m|2n+b)$       & $\g\l(m|n)\times\g\l(m|n+b)$, $b\ge 0$    \\ 
        \hline
        $\o\s\p(2m|4n+2b)$    & $\o\s\p(m|2n)\times\o\s\p(m|2n+2b)$, $b\ge 0$        \\
        \hline
        $\o\s\p(2m+a|4n), m\geq 1$, $n\geq 0$       & $\o\s\p(m|2n)\times\o\s\p(m+a|2n)$, $a\ge 0$         \\ \hline
        $\g\l(m|2n)$          & $\o\s\p(m|2n)$       \\ \hline
        $\mathfrak{d}(1,2;\alpha)$ & $\o\s\p(2|2)\times\s\o(2)$\\
        \hline
          $\mathfrak{ab}(1|3)$ & $\g\o\s\p(2|4)$ \\
        \hline
          $\mathfrak{ab}(1|3)$ & $\s\l(1|4)$ \\
          \hline
            $\a\b(1|3)$ & $\mathfrak{d}(1,2;2)\times\s\l(2)$ \\
        \hline
    \end{tabular}
    \end{center}

    



\begin{lemma}\label{lemma interlacing pure}
    The following are equivalent:
    \begin{enumerate}
        \item  $(\g,\k)$ is interlacing.
        \item Both $(\g,\k)$ and $(\g,\k')$ admit Iwasawa decompositions.
        \item $\c(\a)_{\ol{1}}=0$.
        \item $\Delta\sub\a^*$ is pure.
        \item $\rho_{\Sigma}=\rho_{\Sigma}^{\text{pure}}$ for any $\Sigma\sub\Delta$.
    \end{enumerate}
\end{lemma}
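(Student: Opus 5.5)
We prove the cycle of implications $(1)\Rightarrow(2)\Rightarrow(3)\Rightarrow(4)\Rightarrow(5)\Rightarrow(3)$ and close it with $(3)\Rightarrow(1)$.

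For $(1)\Rightarrow(2)$: $(\g,\k)$ has an Iwasawa decomposition by our standing assumption (\ref{assumption iwasawa}). Applying $Ad(\exp(a))$ to $\g=\k\oplus\a\oplus\n$ gives $\g=\k'\oplus\a\oplus Ad(\exp(a))\n$. Since $a\in\a$ acts on each $\g_\beta$ by a scalar, both $\a$ and $\n$ are preserved, so $(\g,\k')$ has an Iwasawa decomposition as well.

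For $(2)\Rightarrow(3)$: note that $\a$ is still a Cartan subspace for $(\g,\k')$, because $\k'_{\ol 0}=\k_{\ol 0}$ and $\p'_{\ol0}=\p_{\ol0}$. The centralizer $\c(\a)$ is $\theta$-stable, and
\[
\c(\a)\cap\k'=\c(\a)_{\ol0}\cap\k\oplus \c(\a)_{\ol1}\cap\p .
\]
The Iwasawa decomposition for $\k$ forces $\c(\a)\cap\p=\a$, so $\c(\a)_{\ol1}\cap\p=0$. The Iwasawa decomposition for $\k'$ forces $\c(\a)\cap\p'=\a$, and $\p'_{\ol1}=\k_{\ol1}$, so $\c(\a)_{\ol1}\cap\k=0$. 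Together these give $\c(\a)_{\ol1}=0$.

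For $(3)\Leftrightarrow(4)$: we use the symplectic-type pairing $\langle x,y\rangle=(\theta x,y)$ and the structure of the rank-one subalgebras $\g(\alpha)$. If some $\g_\alpha$ has both even and odd parts, the odd part should contribute an odd piece of $\c(\a)$ through brackets $[(\g_\alpha)_{\ol0},(\g_{-\alpha})_{\ol1}]$. Conversely, an odd element of $\c(\a)=\m$ acting on $(\g_\alpha)_{\ol0}$ should produce odd vectors in $\g_\alpha$. Since $\c(\a)$ is a Levi with $\h\subset\c(\a)$, odd elements of $\c(\a)$ are sums of root vectors for odd roots $\gamma$ with $\gamma|_\a=0$. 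For each such $\gamma$ and any root $\beta$, we then look for $\beta$ and $\beta\pm\gamma$ of different parity restricting to the same $\alpha$; indecomposability of $\g$ guarantees such $\beta$ exist. I would carry this out via the rank-one classification (Lemma \ref{lemma classification rank one}), checking case by case. This is the step I expect to be the main obstacle; if the direct argument stalls, I would fall back on the classification table of \cite[\S5]{Sh4} together with the interlacing table above.

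For $(4)\Rightarrow(5)$: this is immediate from the definitions, since purity means $m_{\alpha,0}=0$ for $\alpha$ odd and $m_{\alpha,1}=0$ for $\alpha$ even.

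For $(5)\Rightarrow(4)$: the difference is
\[
\rho-\rho^{\text{pure}}=\tfrac12\sum_{\alpha\in\Delta^+_{odd}}m_{\alpha,0}\alpha ,
\]
a nonnegative combination of positive roots. If this vanishes for all $\Sigma$, then it vanishes in particular for a $\phi$ with $\phi(\alpha)>0$ on $\Delta^+$. The image under $\phi$ is then a sum of nonnegative terms equal to zero, so each $m_{\alpha,0}=0$.

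For $(3)\Rightarrow(1)$: we need to construct $a$. Define $\lambda\in\a^*$-valued data so that $e^{\beta(a)}$ acts on $\g_\beta$, and demand that $Ad(\exp a)$ send $x+\theta x$ to an element of $\k'$. Concretely, $Ad(\exp a)(x+\theta x)=e^{\beta(a)}x+e^{-\beta(a)}\theta x$, and $\k'$ consists of the vectors $y+\theta y$ for $y$ even and $y-\theta y$ for $y$ odd. So we need $e^{2\beta(a)}=1$ on even root spaces and $e^{2\beta(a)}=-1$ on odd root spaces, and this is only consistent because purity holds. This reduces to finding a homomorphism $\Z\Delta\to\Z/2$ equal to the parity, that is, showing that parity of restricted roots is additive. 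Purity makes parity a well-defined function on $\Delta$. Additivity follows since $[\g_\alpha,\g_\beta]$ has parity equal to the sum of parities, and $\Delta$ is generated by a base, so it suffices to check that parity is determined on a base. Using $\c(\a)_{\ol1}=0$, $\m$ is even, and the $\Z$-grading by $\a$ is compatible with parity. We then pick $a$ with $\alpha_i(a)=\pi i/2$ on odd simple roots and $0$ on even simple roots.
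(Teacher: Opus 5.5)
Your steps $(1)\Rightarrow(2)$, $(2)\Rightarrow(3)$ and $(4)\Leftrightarrow(5)$ are correct. The first two are more elementary than the citations the paper uses, and your computation of $\c(\a)\cap\p'$ also gives $(3)\Rightarrow(2)$ directly. But the cycle does not close, because the only arrow back into $(1)$ is your $(3)\Rightarrow(1)$, and it has a genuine gap.

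Once you reduce to finding $a\in\a$ with $e^{2\beta(a)}=(-1)^{p(\beta)}$, you need the parity $p$ to extend to a homomorphism $\Z\Delta\to\Z/2$. Your argument conflates two kinds of generation. That $\Sigma$ spans $\Z\Delta$ is a statement about the lattice. The bracket observation only gives $p(\alpha+\beta)=p(\alpha)+p(\beta)$ when $[\g_\alpha,\g_\beta]\neq0$. To get $p(\sum n_i\alpha_i)=\sum n_ip(\alpha_i)$ for every $\beta\in\Delta^+$, you would need, for instance, that every $\g_\beta$ is spanned by iterated brackets of simple restricted root spaces, i.e.\ that $\n$ is generated by $\bigoplus_{\alpha\in\Sigma}\g_\alpha$. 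The condition actually required is that the parity character of the root lattice of $\g$ vanish on every lattice vector restricting to $0$ on $\a$. Neither is an immediate consequence of purity and $\c(\a)_{\ol{1}}=0$, and you give no argument for either. Saying ``the grading by $\a$ is compatible with parity'' just restates the claim. This is exactly the non-formal input the paper does not reprove: it takes $(2)\Rightarrow(1)$ from [Sh2, Sec.~4], and closes the loop with $(1)\Rightarrow(5)$ (a rank-one check), $(5)\Rightarrow(4)$ (your argument) and $(4)\Rightarrow(3)$.

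Second, your $(3)\Rightarrow(1)$ uses purity, so it also rests on $(3)\Leftrightarrow(4)$, which you only sketch.

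For $(3)\Rightarrow(4)$, the claim that $[(\g_\alpha)_{\ol{0}},(\g_{-\alpha})_{\ol{1}}]\neq0$ is unsupported. The invariant form pairs this bracket only with $\c(\a)_{\ol{1}}$, so it cannot be used to show the bracket is nonzero. Your fallback does work, though: all $\g(\alpha)$ share the centralizer $\c(\a)$, and in the rank-one list the pairs with purely even centralizer are exactly the pure ones.

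For $(4)\Rightarrow(3)$, a rank-one case check cannot suffice. The classification of $\g(\alpha)$ holds only up to $\theta$-fixed split factors, which lie in $\m$ and may contain all of $\c(\a)_{\ol{1}}$ for a given $\alpha$. What is needed is the global argument the paper uses:
\begin{itemize}
\item If each $\g_\alpha$ is purely even or purely odd, an odd $c\in\c(\a)$ maps $\g_\alpha$ into its opposite-parity part, which is zero.
\item Hence $c$ centralizes the ideal generated by all root spaces.
\item Indecomposability of $\g$ then forces $c=0$.
\end{itemize}
Your remark that ``indecomposability guarantees such $\beta$ exist'' points at this, but it is the actual content of the step and has to be carried out.
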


\begin{proof}
    (1)$\iff$(2) is proven in \cite[Sec.~4]{Sh2}, and (3)$\Rightarrow$(2) follows from \cite[Thm.~5.3]{Sh4}.  (1)$\Rightarrow$(5) can be checked on $(\g,\k)$ of rank one, where it is easy to check it directly (see Lemma \ref{lemma classification rank one}).
    
    For (5)$\Rightarrow$(4) observe that :
    \[
    \rho_{\Sigma}-\rho_{\Sigma}^{\text{pure}}=\frac{1}{2}\sum\limits_{\alpha\in\Delta_{reg,odd}^+}m_{\alpha,0}\alpha,
    \]
    and this quantity is 0 if and only if $m_{\alpha,0}=0$ for all $\alpha\in\Delta_{reg,odd}^+$.
    
    Finally, for (4)$\Rightarrow$(3), suppose for a contradiction that $\c(\a)_{\ol{1}}\neq0$. Then $\c(\a)_{\ol{1}}$ acts trivially on $\g_{\alpha}$ for all $\alpha\in\Delta$, because all $\g_{\alpha}$ are either purely even or purely odd.  This implies that $\c(\a)_{\ol{1}}$ commutes with the subalgebra generated by all root spaces $\g_{\alpha}$ for $\alpha\in\Delta$.  However, this subalgebra is an ideal, so $\c(\a)_{\ol{1}}$ commutes with this ideal.  Since we assume that $\g$ is indecomposable, the ideal is either everything, the center (which is purely even), or codimension 1.  Clearly none of these are possible, so we obtain a contradiction.

\end{proof}

\section{Invariant polynomials}
\label{Sec: invariant polynomials}

In this section we introduce the different subspaces of the ring of polynomials on $\a$ that we will use in the following sections. Fix throughout a base $\Sigma\sub\Delta$.   

\subsection{The polynomial $T_{\Sigma}$}  For $\alpha\in\Delta_{reg}$, recall that $n_\alpha=m_{\alpha,1}/2$, and define
\[
T_{\alpha,\Sigma}:=\prod\limits_{s=0}^{n_{\alpha}-1}(h_{\alpha}+(\rho_{\Sigma}^{\text{pure}}+(1-n_{\alpha}+2s)\alpha,\alpha)).
\]
For $\alpha\in\Delta_{iso}$, define
\[
T_{\alpha,\Sigma}:=h_{\alpha}+(\rho_{\Sigma},\alpha).
\]
Observe that $T_{\alpha}$ is relatively prime to $T_{\beta}$ for distinct $\alpha,\beta\in\Delta^+$.
\begin{lemma}\label{lemma Talpha under transl}
    Let $\beta\in\Delta$ be either regular or isotropic, and suppose that $\alpha\in\Sigma$ is singular.  Then
    \[
    T_{\beta,\Sigma}=t_{m_{\alpha,1}\alpha}^*(T_{\beta,r_{\alpha}\Sigma}).
    \]
\end{lemma}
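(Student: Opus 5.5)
The identity is checked factor by factor: both sides are products of affine linear functions of the form $h_\beta+c$. The translation $t^*_{m_{\alpha,1}\alpha}$ shifts each constant $c$ by $-m_{\alpha,1}(\alpha,\beta)$. The Weyl vectors for $\Sigma$ and $r_\alpha\Sigma$ differ by exactly $m_{\alpha,1}\alpha$ (Lemmas \ref{lemma rho refl}(2) and \ref{lemma pure rho}(2)), so the two shifts cancel.

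First I record how $t^*_\lambda$ acts on a single linear factor. Since $t^*_\lambda$ is an algebra automorphism, it suffices to compute it on each factor and then take products. For $\mu\in\a^*$ and a constant $c$,
\[
t^*_{\lambda}(h_\beta+c)(\mu)=(\beta,\mu-\lambda)+c=(h_\beta+c-(\lambda,\beta))(\mu),
\]
so $t^*_\lambda(h_\beta+c)=h_\beta+c-(\lambda,\beta)$. We apply this with $\lambda=m_{\alpha,1}\alpha$.

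\emph{Regular case.} Let $\beta$ be regular. The integer $n_\beta=m_{\beta,1}/2$ depends only on $\beta$ and not on the base. Hence $T_{\beta,\Sigma}$ and $T_{\beta,r_\alpha\Sigma}$ are products over the same index set $s=0,\dots,n_\beta-1$. For fixed $s$, the factor of $T_{\beta,r_\alpha\Sigma}$ is $h_\beta+(\rho^{\text{pure}}_{r_\alpha\Sigma}+(1-n_\beta+2s)\beta,\beta)$. By Lemma \ref{lemma pure rho}(2), $\rho^{\text{pure}}_{r_\alpha\Sigma}=\rho^{\text{pure}}_\Sigma+m_{\alpha,1}\alpha$, so this constant equals
\[
(\rho^{\text{pure}}_\Sigma+(1-n_\beta+2s)\beta,\beta)+m_{\alpha,1}(\alpha,\beta).
\]
Applying $t^*_{m_{\alpha,1}\alpha}$ subtracts $m_{\alpha,1}(\alpha,\beta)$. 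The result is exactly the $s$-th factor of $T_{\beta,\Sigma}$, and taking the product over $s$ gives the claim.

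\emph{Isotropic case.} Let $\beta$ be isotropic. Then $T_{\beta,r_\alpha\Sigma}=h_\beta+(\rho_{r_\alpha\Sigma},\beta)$. By Lemma \ref{lemma rho refl}(2) this equals $h_\beta+(\rho_\Sigma,\beta)+m_{\alpha,1}(\alpha,\beta)$. Applying $t^*_{m_{\alpha,1}\alpha}$ again removes the last term and yields $T_{\beta,\Sigma}$.

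The only point that needs care, and which I expect to be the sole subtlety, is that the definition of $T_{\beta,\Sigma}$ depends on $\Sigma$ only through $\rho_\Sigma$ or $\rho^{\text{pure}}_\Sigma$. In particular, $T_{\beta,\Sigma}$ is defined for every regular or isotropic $\beta\in\Delta$, not only for positive ones, so the change of positive system under $r_\alpha$ does not affect which polynomial is attached to $\beta$. Once this is granted, the sign convention $t^*_\lambda(f)(\mu)=f(\mu-\lambda)$ makes the shift in the Weyl vector cancel exactly against the translation, as computed above.
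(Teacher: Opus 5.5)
Your proof is correct and follows the same route as the paper's: compute $t^*_{m_{\alpha,1}\alpha}(h_\beta+c)=h_\beta+c-m_{\alpha,1}(\alpha,\beta)$ on each linear factor, then cancel this against the shift $\rho^{\text{pure}}_{r_\alpha\Sigma}=\rho^{\text{pure}}_\Sigma+m_{\alpha,1}\alpha$ from Lemma \ref{lemma pure rho}(2). You also write out the isotropic case explicitly using Lemma \ref{lemma rho refl}(2), which the paper's terse proof leaves implicit.
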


\begin{proof}
Indeed, $t_{m_{\alpha,1}\alpha}^*(h_{\beta})=h_{\beta}-(m_{\alpha,1}\alpha,\beta)$.  Thus
\[
t_{m_{\alpha,1}\alpha}^*(h_{\alpha}+(\rho_{r_{\alpha}\Sigma}^{\text{pure}}+r\alpha,\alpha))=(h_{\alpha}+(\rho_{r_{\alpha}\Sigma}^{\text{pure}}-m_{\alpha,1}\alpha+r\alpha,\alpha)).
\]
We may now apply Lemma \ref{lemma pure rho}.
\end{proof}

Define a polynomial $T_{\Sigma}\in S(\a)$ by
\[
T_{\Sigma}:=T_{reg}T_{iso}, 
\]
where 
\[
T_{reg}:=\prod\limits_{\alpha\in\Delta_{reg}^+}T_{\alpha,\Sigma}, \ \ \ T_{iso}:=\prod\limits_{\alpha\in\Delta_{iso}^+}T_{\alpha,\Sigma},
\]
Compare $T_{\Sigma}$ with the element $t$ introduced in \cite[\S 4]{G1}.

\begin{lemma}\label{lemma invce T}
Given the $\rho$-shifted action of $W$ on $S(\a)$,
    \begin{enumerate}
    \item the element $T_{iso}$ is $W_{\smallbullet}$-invariant,
        \item the element $T_{\Sigma}$ is $W_{\smallbullet}'$-invariant, and
        \item if $\alpha$ is a regular odd root, then $T_{\Sigma}/T_{\alpha}$ is $(r_{\alpha})_{\smallbullet}$-invariant.
    \end{enumerate}
\end{lemma}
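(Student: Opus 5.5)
Since $W$ is generated by the reflections $r_\beta$ for $\beta\in\Pi$, and $W'$ by those with $\beta\in\Pi$ purely even, it is enough to understand how a single simple reflection $r_\beta$ acts under the dot action on each linear factor. The key identity is
\[
(r_\beta)_{\smallbullet}\bigl(h_\gamma+c\bigr)(\lambda)=(r_\beta(\lambda+\rho),\gamma)-(\rho,\gamma)+c=\bigl(h_{r_\beta\gamma}+(\rho,r_\beta\gamma)-(\rho,\gamma)+c\bigr)(\lambda).
\]
So $(r_\beta)_{\smallbullet}$ sends $h_\gamma+(\rho,\gamma)$ to $h_{r_\beta\gamma}+(\rho,r_\beta\gamma)$. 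In other words, the $\rho$-shifted linear forms $h_\gamma+(\rho,\gamma)$ are permuted covariantly by $W$.

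\emph{Part (1).} The isotropic roots form a $W$-stable set. For $\beta\in\Pi$ simple in $\Delta^+_{reg}$, $r_\beta$ permutes $\Delta^+\setminus\{\beta,2\beta\}$, and isotropic roots are never multiples of a regular root. Hence $r_\beta$ permutes $\Delta^+_{iso}$, and by the identity above $(r_\beta)_{\smallbullet}$ permutes the factors of $T_{iso}$. A subtlety: $\beta\in\Pi$ need not be simple in $\Sigma$. I would handle this by applying singular reflections (Lemma~\ref{lemma conj props}(1)) so that $\beta$ becomes simple in some $\Sigma'$. Lemma~\ref{lemma Talpha under transl} and Lemma~\ref{lemma invce under translation} then show that the statement for $\Sigma'$ transports to $\Sigma$ via $t^*_{m_{\alpha,1}\alpha}$. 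Alternatively, one checks directly that $r_\beta$ permutes the positive isotropic roots, using only that $\beta$ is simple in $\Delta_{reg}^+$ and that a positive isotropic root has $r_\beta\gamma$ positive.

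\emph{Part (2).} For $\beta\in\Pi$ purely even, the same identity applies to the factors of $T_{reg}$, now with $\rho^{\text{pure}}$ in place of $\rho$. I would first check that $\rho-\rho^{\text{pure}}=\frac12\sum_{\Delta^+_{reg,odd}}m_{\gamma,0}\gamma$ is $W'$-invariant. This follows because $\Delta^+_{reg,odd}$ is $W'$-stable by Lemma~\ref{lemma conj props}(2), and multiplicities are $W$-invariant. Consequently $(r_\beta)_{\smallbullet}$ with respect to $\rho$ coincides with the dot action with respect to $\rho^{\text{pure}}$.

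Next, $r_\beta$ permutes $\Delta^+_{reg}\setminus\{\beta,2\beta\}$ preserving $n_\gamma$, and the set of shifts $\{1-n_\gamma+2s\}$ is symmetric. It follows that the factor block $T_\gamma$ is sent to $T_{r_\beta\gamma}$. For $\gamma\in\{\beta,2\beta\}$ we have $n_\gamma=0$ since $\beta$ is purely even, so $T_\gamma=1$. Combined with part (1), this gives (2).

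\emph{Part (3).} Take $\alpha$ regular odd. By Lemma~\ref{lemma conj props}(2), after conjugating by $W'$ (under which $T_\Sigma$ is invariant by (2)) we may assume $\alpha\in\Pi$. Then $r_\alpha$ permutes $\Delta^+\setminus\{\alpha,2\alpha\}$. For these roots the factors are permuted as before, using the $\rho$- versus $\rho^{\text{pure}}$-shift difference $\rho-\rho^{\text{pure}}$, paired against $\gamma$. This needs $(\rho-\rho^{\text{pure}},\gamma)$ to be compatible with the reflection, i.e.\ $(r_\alpha(\rho-\rho^{\text{pure}})-(\rho-\rho^{\text{pure}}),\gamma)=0$. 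Here Corollary~\ref{cor reg odd roots ortho} helps: $\rho-\rho^{\text{pure}}$ is a combination of regular odd roots, which are orthogonal to $\alpha$ except $\pm\alpha$. So $r_\alpha(\rho-\rho^{\text{pure}})=\rho-\rho^{\text{pure}}-m_{\alpha,0}\alpha$. Finally, $T_{2\alpha}$ has $n_{2\alpha}=0$ (as $2\alpha$ is purely even), so removing $T_\alpha$ leaves an invariant product.

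\emph{Main obstacle.} The step I expect to be hardest is exactly this shift bookkeeping in (3), namely showing that the discrepancy term $m_{\alpha,0}(\alpha,\gamma)$ vanishes or is absorbed for the relevant $\gamma$. I would handle it case-by-case using the $BC(r,s)$ description from the proof of Lemma~\ref{lemma conj props}.
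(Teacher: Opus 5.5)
Parts (1) and (2) are essentially sound. Part (3), however, is left unfinished at exactly the step you flag as the main obstacle, and the case-by-case analysis you propose is unnecessary.

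The missing observation is that $T_{\gamma,\Sigma}=1$ whenever $n_\gamma=0$. So the only nontrivial factors of $T_{reg}/T_\alpha$ are the $T_\gamma$ with $\gamma\in\Delta^+_{reg,odd}\setminus\{\alpha\}$. Note that $2\alpha$, if it is a root, is not regular odd: by Corollary~\ref{cor reg odd roots ortho} it is neither $\pm\alpha$ nor orthogonal to $\alpha$. By the same corollary, every remaining $\gamma$ is orthogonal to $\alpha$, so $r_\alpha\gamma=\gamma$. Your own identity then gives
\[
(r_\alpha)_{\smallbullet}(h_\gamma+c)=h_\gamma+(\rho,\gamma)-(\rho,\gamma)+c=h_\gamma+c
\]
for every constant $c$. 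Hence every factor of $T_{reg}/T_\alpha$ is fixed individually, and together with (1) this proves (3); this is the paper's argument. In your formulation, the discrepancy $m_{\alpha,0}(\alpha,\gamma)$ vanishes for every $\gamma$ that actually contributes a factor, by the corollary you already invoked for $\rho-\rho^{\text{pure}}$. Neither the reduction to $\alpha\in\Pi$ nor any comparison of the $\rho$- and $\rho^{\text{pure}}$-shifts is needed.

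Three smaller corrections. First, your ``alternative'' in (1) is false: for $\beta\in\Pi$ not simple in $\Sigma$, $r_\beta$ can send a positive isotropic root to a negative one. For the diagonal pair of $\g\l(2|1)$ with $\Sigma=\{\varepsilon_1-\delta,\delta-\varepsilon_2\}$, one has $\Pi=\{\varepsilon_1-\varepsilon_2\}$ and $r_{\varepsilon_1-\varepsilon_2}(\varepsilon_1-\delta)=\varepsilon_2-\delta\in\Delta^-$. For the same reason, your claim in (3) that $r_\alpha$ permutes $\Delta^+\setminus\{\alpha,2\alpha\}$ for $\alpha\in\Pi$ holds only for regular roots. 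This is harmless there, since the isotropic factors are handled by (1).

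Second, your primary route for (1), moving $\beta$ into a base by singular reflections and transporting, matches the paper. The paper runs it on the unshifted product $\prod_{\gamma\in\Delta^+_{iso}}h_\gamma$, which is independent of $\Sigma$ up to sign, and so avoids the translation bookkeeping.

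Third, in (2) you work directly with $\beta\in\Pi$ rather than first making $\beta$ simple in $\Sigma$, as the paper does. This is legitimate, since only regular roots enter $T_{reg}$. However, the $W'$-stability of $\Delta^+_{reg,odd}$ does not follow from Lemma~\ref{lemma conj props}(2). It follows from $r_\beta$ permuting $\Delta^+_{reg}\setminus\{\beta,2\beta\}$, together with $\beta,2\beta$ being purely even and multiplicities being $W$-invariant.
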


\begin{proof}
   For (1) it is equivalent to show that $\prod\limits_{\alpha\in\Delta_{iso}^+}h_{\alpha}$ is $W$-invariant with respect to the non-shifted action.  However, this element is independent (up to a sign) of $\Sigma$, and thus we only need to show it is invariant under $r_{\alpha}$ for $\alpha$ simple, when this statement is obvious (see also \cite[Lem.~4.2.1]{G1}). 

If $\alpha$ is regular and odd, then it is orthogonal to all roots in $\Delta_{reg,odd}^+\setminus\{\alpha\}$ by Corollary \ref{cor reg odd roots ortho}, and thus fixes every factor of $T_{reg}/T_{\alpha}$.  In combination with (1), this proves (3). 

 It remains to show that $T_{reg}$ is $W_{\smallbullet}'$-invariant.  For this, we exactly want to show that $T_{reg}$ is $(r_{\alpha})_{\smallbullet}$-invariant, where $\alpha$ runs over the roots of $\Pi$ that are not odd.    After translating by $-\rho_{\Sigma}$, our problem is equivalent to showing that $r_{\alpha}$ fixes
 \[
T_{\Sigma,reg}':=\prod\limits_{\beta\in\Delta_{reg}^+}\prod\limits_{s=0}^{n_{\beta}-1}(h_{\beta}+(\rho_{\Sigma}^{\text{pure}}-\rho_{\Sigma}+(1-n_{\beta}+2s)\beta,\beta)),
 \]

 Note that
  \[
  \rho_{\Sigma}^{\text{pure}}-\rho_{\Sigma}=-\frac{1}{2}\sum\limits_{\delta\in\Delta_{reg,odd}^+}m_{\delta,0}\delta.
  \]
  However, observe that if $\gamma\in\Sigma$ is singular, then $\rho_{\Sigma}^{\text{pure}}-\rho_{\Sigma}=\rho_{r_{\gamma}\Sigma}^{\text{pure}}-\rho_{r_{\gamma}\Sigma}$.  Thus $T'_{\Sigma,reg}=T'_{r_{\gamma}\Sigma,reg}$.  Using Lemma \ref{lemma conj props}, we may therefore assume that $\alpha\in\Sigma$, i.e.~$\alpha$ is simple. 
  
  If $\alpha$ is simple, $r_{\alpha}(\Delta_{reg,odd}^+)\sub\Delta_{reg,odd}^+$.  Further, since the multiplicities $m_{\beta,0}$ are $W$-invariant, $r_{\alpha}$ will fix $\rho_{\Sigma}^{\text{pure}}-\rho$.  From this, invariance follows.
\end{proof}

\begin{example} \label{ex:non interlacing non invariance}
    In the case $\g=\o\s\p(4+a|8+2b)$, $\k=\o\s\p(2|4)\times\o\s\p(2+a|4+2b)$, described in Example \ref{osp example} for the standard choice of $\Sigma$, with $\rho=\rho_{\Sigma}$, we have that
    $$T_{iso}(\lambda-\rho)=\prod\limits_{i,j=1}^2(\lambda,\varepsilon_i+\delta_j)(\lambda,\varepsilon_i-\delta_j).$$
    If $a,b\ne 0$ then $T_{reg}=T_{\varepsilon_1} T_{\varepsilon_2}T_{\delta_1}T_{\delta_2}$ where 
    \begin{align*}
        T_{\varepsilon_i}(\lambda-\rho)&=\prod_{s=0}^{b-1} \left((\lambda,\varepsilon_i)-\frac{a}{2}+1-b+2s\right),\\
        T_{\delta_i}(\lambda-\rho)&=\prod_{s=0}^{a-1} \left((\lambda,\delta_i)+\frac{a-1}{2}+b-s\right).
    \end{align*}
    We see that $T_{reg}(\lambda-\rho)$ is invariant under $W'$, namely permutations of $\{\varepsilon_1,\varepsilon_2\}$ and $\{\delta_1,\delta_2\}$ whereas $T_{iso}(\lambda-\rho)$ is invariant under all signed permutations. If $a$ or $b$ equals to zero then $(\g,\k)$ is interlacing and $T_{reg}=1$.
\end{example}

\subsection{Invariance conditions }\label{sec: invc conditions}

\begin{definition}
   Define $J_{\a}=J_{\a,\Sigma}$ to be the subalgebra of $S(\a)$ consisting of polynomials $f\in S(\a)^{W_{\smallbullet}}$ such that for all singular roots $\alpha$ we have
        \[
        f(\lambda+r\alpha)=f(\lambda-r\alpha), 
        \]
        for $1\leq r\leq n_{\alpha}$ and $(\lambda+\rho_{\Sigma},\alpha)=0$.
\end{definition}

\begin{example}\label{example J osp(2|2)}
    Consider the restricted root system of $(\o\s\p(2|2n),\o\s\p(1|2n))$, which is given by $\Delta=\{\pm\alpha\}\sub\mathbb{C}=\a^*$, where $\alpha$ is purely odd.   Let $h\in\a$ be such that $h(\alpha)=1$, and set $\Sigma=\{\alpha\}$. Observe that $W=W'=\{1\}$ in this case. Then $n_{\alpha}=n$, and we obtain that $J_{\a}$ consists of $f\in\C[t]$ such that
    \[
    f(n+r)=f(n-r) \ \text{ for } \ 1\leq r\leq n.
    \]
    In this case, $J_{\a}$ coincides with the subalgebra of $\C[t]$ generated by $t(t-2n)$ and \linebreak $t(t-1)(t-2)\cdots(t-2n)$.
\end{example}

\begin{definition} \label{def: M_a}
    Define $M_{\a}:=M_{\a,\Sigma}\sub S(\a)$ by those $f\in S(\a)^{W_{\smallbullet}}T_{\Sigma}$ such that for all singular, non-isotropic roots $\alpha$ lying in $W'(\Delta_{\Sigma,ad})$, we have
       \[
       f(\lambda+r\alpha)=(-1)^{r}f(\lambda-r\alpha),
       \]
        for $1\leq r\leq n_{\alpha}$ and $(\lambda+\rho_{\Sigma},\alpha)=0$.
 \end{definition}

Notice that $M_{\a}\sub S(\a)^{W_{\smallbullet}'}$ by Lemma \ref{lemma invce T}. 

\begin{lemma}\label{lemma interlacing invce conditions}
If $\Delta$ is pure (i.e.~$(\g,\k)$ is interlacing), then $M_{\a}$ has the following equivalent description.  It is those $f\in S(\a)^{W_{\smallbullet}'}$ such that
	\begin{enumerate}
       \item for $\alpha$ singular, we have
		\[
		f(\lambda+r\alpha)=(-1)^{r}f(\lambda-r\alpha),
		\]
		for all $(\lambda+\rho_{\Sigma},\alpha)=0$ and $1\leq r\leq n_{\alpha}$.
		\item for $\alpha\in\Delta_{reg,odd}$ we have  
        \[
        (r_{\alpha})_{\smallbullet}(f)=(-1)^{n_{\alpha}}f,
        \]
        and 
		\[
		f(\lambda+r\alpha)=(-1)^{r}f(\lambda-r\alpha),
		\]
        for $1\leq r\leq n_{\alpha}$ and $(\lambda+\rho_{\Sigma},\alpha)=0$.
	\end{enumerate}
\end{lemma}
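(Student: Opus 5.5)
We will prove the two inclusions, writing $M'$ for the set described in the lemma. Throughout, $\Delta$ is pure, so by Lemma~\ref{lemma interlacing pure} we have $\rho_\Sigma=\rho_\Sigma^{\text{pure}}$. Purity also gives $\Delta_{reg,odd}$ consisting of roots $\alpha$ with $m_{\alpha,0}=0$, so $\g_\alpha$ is purely odd while $\g_{2\alpha}$ is even. For such $\alpha$ the factor is
\[
T_\alpha=\prod_{s=0}^{n_\alpha-1}\bigl(h_\alpha+(\rho+(1-n_\alpha+2s)\alpha,\alpha)\bigr).
\]
Its roots, in the variable $\mu=\lambda+\rho$, are the hyperplanes $(\mu,\alpha)=(n_\alpha-1-2s)(\alpha,\alpha)$. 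This set is symmetric under $\mu\mapsto r_\alpha\mu$, so $(r_\alpha)_{\smallbullet}T_\alpha=(-1)^{n_\alpha}T_\alpha$. Combined with Lemma~\ref{lemma invce T}(3), this gives $(r_\alpha)_{\smallbullet}T_\Sigma=(-1)^{n_\alpha}T_\Sigma$.

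\textbf{Step 1: $M_\a\subseteq M'$.} Take $f=gT_\Sigma$ with $g\in S(\a)^{W_{\smallbullet}}$.
\begin{itemize}
\item[(i)] $W'_{\smallbullet}$-invariance of $f$ follows from Lemma~\ref{lemma invce T}(2).
\item[(ii)] The sign rule $(r_\alpha)_{\smallbullet}f=(-1)^{n_\alpha}f$ for $\alpha\in\Delta_{reg,odd}$ follows from the computation above.
\item[(iii)] For the shifted-symmetry condition attached to $\alpha\in\Delta_{reg,odd}$, note that $\lambda\pm r\alpha$ (with $(\lambda+\rho,\alpha)=0$) are exchanged by the dot action of $r_\alpha$. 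Hence $f(\lambda+r\alpha)=(-1)^{n_\alpha}f(\lambda-r\alpha)$. For $r\equiv n_\alpha \pmod 2$ this is already the required identity. For the remaining $r$, we have $(\lambda\pm r\alpha+\rho,\alpha)=\pm r(\alpha,\alpha)$ with $r\le n_\alpha$ and $r\not\equiv n_\alpha-1$, so $T_\alpha$ vanishes there. Thus both sides are $0$.
\item[(iv)] For singular $\alpha$, note first that in the interlacing case every singular root is isotropic, by the rank-one classification in Lemma~\ref{lemma classification rank one}. So Definition~\ref{def: M_a} imposes conditions only on non-isotropic singular roots, which are absent. For singular isotropic $\alpha$ we have $T_\alpha(\lambda\pm r\alpha)=\pm r(\alpha,\alpha)=0$ vanishing \emph{identically} on $(\lambda+\rho,\alpha)=0$ only at $r=0$. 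So here we must instead use that singular conditions in the interlacing case can only involve $n_\alpha$ with $\g_{\pm\alpha}$ isotropic of type $\g\l(1|1)$, i.e.\ $n_\alpha=1$. Then $T_\alpha(\lambda\pm\alpha)=\pm(\alpha,\alpha)=0$? Since $(\alpha,\alpha)=0$, this does not vanish.
\end{itemize}
I expect (iv) to be the main obstacle. What I would try first is to reduce to admissible (simple) $\alpha$ via $W'$ and singular reflections, using Lemma~\ref{lemma Talpha under transl} and Lemma~\ref{lemma invce under translation} to transport everything. In the simple case, use that $f$ is divisible by $T_\alpha=h_\alpha+(\rho,\alpha)$, which vanishes on the whole hyperplane $(\lambda+\rho,\alpha)=0$. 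Since $(\lambda\pm r\alpha+\rho,\alpha)=0$ as well, $f(\lambda\pm r\alpha)=0$ and the condition holds trivially.

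\textbf{Step 2: $M'\subseteq M_\a$.} Let $f\in M'$.
\begin{itemize}
\item[(i)] \emph{Divisibility by $T_\Sigma$.} For each isotropic $\alpha$, reduce as in Step 1 to $\alpha$ admissible, and then to $\alpha$ simple via singular reflections. There, condition (1) with $n_\alpha=1$ forces $f(\lambda+\alpha)=-f(\lambda-\alpha)$. Along the hyperplane this yields antisymmetry, which I would convert into vanishing of $f$ on $T_\alpha=0$, using that $\lambda\mapsto\lambda+\alpha$ preserves the hyperplane. For $\alpha\in\Delta_{reg,odd}$, the sign rule together with (2) forces vanishing at the $n_\alpha$ hyperplanes $(\mu,\alpha)=(n_\alpha-1-2s)(\alpha,\alpha)$. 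At the reflection-fixed one this follows from the antisymmetry when $n_\alpha$ is odd. At the others it follows by combining $(-1)^r$ with $(-1)^{n_\alpha}$ for $r\not\equiv n_\alpha$.
\item[(ii)] \emph{Conclusion.} Since the $T_\beta$ are pairwise coprime, $T_\Sigma\mid f$. Set $g=f/T_\Sigma$. Then $g$ is $W'_{\smallbullet}$- and $(r_\alpha)_{\smallbullet}$-invariant for each odd regular $\alpha$. These generate $W$, by $\Pi\subseteq W'(\Pi\cap\Delta_{reg,odd})\cup\Pi_{ev}$ and Lemma~\ref{lemma conj props}, so $g\in S(\a)^{W_{\smallbullet}}$.
\end{itemize}
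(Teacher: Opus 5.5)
Your overall route is the paper's: since $\rho_\Sigma=\rho_\Sigma^{\text{pure}}$, condition (2) forces divisibility by the regular odd factors $T_\alpha$, and $(r_\alpha)_{\smallbullet}T_\Sigma=(-1)^{n_\alpha}T_\Sigma$ turns the sign rule into $W_{\smallbullet}$-invariance of $f/T_\Sigma$. Your Step 1(i)--(iii) and Step 2 carry this out correctly. (In 1(iii) the remaining $r$ are those with $r\equiv n_\alpha-1$, not $r\not\equiv n_\alpha-1$.)

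\textbf{The gap is Step 1(iv).} Your claim that an interlacing pair has no non-isotropic singular roots is false. The pair $(\o\s\p(2|2n),\o\s\p(1|2n))$ has pure restricted root system $\{\pm\alpha\}$ with $m_\alpha=(0|2n)$, so it is interlacing by Lemma~\ref{lemma interlacing pure}, yet $\alpha$ is singular with $(\alpha,\alpha)=1$. Likewise, in the paper's interlacing example $(\g\l(2|2),\o\s\p(2|2))$ the singular roots $\varepsilon_i-\delta$ satisfy $(\varepsilon_i-\delta,\varepsilon_i-\delta)=\frac12$. So condition (1) is a genuine constraint at non-isotropic singular roots. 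For $f\in M_\a$, however, the definition only provides it at singular roots lying in $W'(\Delta_{\Sigma,ad})$. Your Step 1 never derives it at the other singular roots, so the inclusion $M_\a\subseteq M'$ is not proved. (The paper's remark that condition (1) is ``common to both descriptions'' implicitly relies on exactly this extension.)

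\textbf{How to close it.} For isotropic $\alpha$ no reduction to simple roots is needed. $T_{iso}$ contains $h_\alpha+(\rho,\alpha)$ for every $\alpha\in\Delta^+_{iso}$, and $\lambda\pm r\alpha$ stays on the hyperplane $(\lambda+\rho,\alpha)=0$, so both sides vanish.

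For a non-isotropic singular $\gamma$, Lemma~\ref{lemma conj props}(3) gives $\gamma=\pm w\alpha$ with $w\in W$ and $\alpha\in\Delta_{\Sigma,ad}$ singular and non-isotropic; there the identity holds by definition of $M_\a$. Write $f=gT_\Sigma$ with $g\in S(\a)^{W_{\smallbullet}}$. Your computation (ii) and Lemma~\ref{lemma invce T}(2) give $w_{\smallbullet}f=\chi(w)f$ for a sign character $\chi$ of $W$, since $W$ is generated by $W'$ and the $r_\beta$ with $\beta\in\Pi\cap\Delta_{reg,odd}$. Now take $(\lambda+\rho,\alpha)=0$ and set $\lambda'=w(\lambda+\rho)-\rho$. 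Then $(\lambda'+\rho,w\alpha)=0$ and $f(\lambda'\pm rw\alpha)=\chi(w)f(\lambda\pm r\alpha)$. The factor $\chi(w)$ cancels, so the identity transfers to $w\alpha$, and the identity at $-w\alpha$ is equivalent to the one at $w\alpha$.

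Similarly, in Step 2 you need not reduce isotropic roots to simple ones, because (1) is assumed for all singular roots. On the hyperplane, $f(\mu+2\alpha)=-f(\mu)$ gives $f(\mu+2k\alpha)=(-1)^kf(\mu)$ for all $k\in\N$. This is a polynomial in $k$ taking alternating values, which forces $f=0$ there.
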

\begin{proof}
    The first condition is common to both descriptions.  We see that the condition (2) in this Lemma implies that $f$ must be divisible by $T_{\Sigma}$ since $\rho_{\Sigma}=\rho_{\Sigma}^{\text{pure}}$, and for $\alpha\in\Delta_{reg,odd}^+$, $(r_{\alpha})_{\smallbullet}(T_{\Sigma})=(-1)^{n_{\alpha}}T_{\Sigma}$.  Therefore the statement follows.
\end{proof}

\begin{lemma}
    Given $f\in J_{\a}$ and $g\in M_{\a}$, we have $fg\in M_{\a}$.  In particular, $M_{\a}$ has the natural structure of a $J_{\a}$-module.  If $\Delta$ is pure, then $M_{\a}\cdot M_{\a}\sub J_{\a}$, so that $J_{\a}+M_{\a}$ is a subalgebra of $S(\a)$.
\end{lemma}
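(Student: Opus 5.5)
The plan is to check the defining conditions of $M_{\a}$ (resp.~$J_{\a}$) directly on products, one condition at a time.

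\emph{First claim.} Let $f\in J_{\a}$ and $g\in M_{\a}$, and write $g=hT_{\Sigma}$ with $h\in S(\a)^{W_{\smallbullet}}$. Since $J_{\a}\sub S(\a)^{W_{\smallbullet}}$ and $S(\a)^{W_{\smallbullet}}$ is a ring, we have $fg=(fh)T_{\Sigma}$ with $fh\in S(\a)^{W_{\smallbullet}}$. So $fg\in S(\a)^{W_{\smallbullet}}T_{\Sigma}$.

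For the odd condition, take a singular non-isotropic $\alpha\in W'(\Delta_{\Sigma,ad})$, a weight $\lambda$ with $(\lambda+\rho_{\Sigma},\alpha)=0$, and $1\le r\le n_{\alpha}$. The condition defining $J_{\a}$ holds for all singular roots, so it applies to this $\alpha$. Hence
\[
(fg)(\lambda+r\alpha)=f(\lambda-r\alpha)\cdot(-1)^rg(\lambda-r\alpha)=(-1)^r(fg)(\lambda-r\alpha).
\]
This proves $fg\in M_{\a}$, and associativity of multiplication in $S(\a)$ makes $M_{\a}$ a $J_{\a}$-module.

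\emph{Second claim.} Assume $\Delta$ is pure, and let $g_1,g_2\in M_{\a}$. I will use the description of $M_{\a}$ in Lemma \ref{lemma interlacing invce conditions}.

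The singular condition for the product follows from multiplying the two relations: for every singular $\alpha$ the signs combine as $(-1)^{2r}=1$, which gives the $J_{\a}$ condition.

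For $W_{\smallbullet}$-invariance, recall that $W$ is generated by the reflections $r_{\beta}$ with $\beta\in\Pi$. Each such $\beta$ is either purely even or regular odd.
\begin{itemize}
\item If $\beta$ is purely even, then $r_{\beta}\in W'$, so $r_{\beta}$ fixes each $g_i$ under the dot action and hence fixes $g_1g_2$.
\item If $\beta$ is regular odd, condition (2) of Lemma \ref{lemma interlacing invce conditions} gives $(r_{\beta})_{\smallbullet}g_i=(-1)^{n_{\beta}}g_i$. The dot action is by algebra automorphisms, since it is composition with an affine map. Therefore $(r_{\beta})_{\smallbullet}(g_1g_2)=(-1)^{2n_{\beta}}g_1g_2=g_1g_2$.
\end{itemize}
Thus $g_1g_2\in S(\a)^{W_{\smallbullet}}$. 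Combined with the singular condition, this gives $g_1g_2\in J_{\a}$.

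Finally, $J_{\a}+M_{\a}$ is closed under multiplication because $J_{\a}J_{\a}\sub J_{\a}$, $J_{\a}M_{\a}\sub M_{\a}$, and $M_{\a}M_{\a}\sub J_{\a}$.

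\emph{Main point requiring care.} The only nonformal step is that the reflections in $\Pi$ generate $W$ and that each of them is purely even or regular odd, so that the sign characters cancel. Both facts are recorded in the discussion preceding Lemma \ref{lemma conj props}.
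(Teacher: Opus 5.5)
Your proof is correct and takes the same approach as the paper. The first claim is the direct check the paper calls clear, and the second is deduced from Lemma \ref{lemma interlacing invce conditions}, as the paper indicates: the sign characters $(-1)^{r}$ and $(-1)^{n_{\beta}}$ square to $1$ in products, and the reflections in $\Pi$ generate $W$.
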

\begin{proof}
    The first statement is clear, and the second follows quickly from Lemma \ref{lemma interlacing invce conditions}. 
\end{proof}

\begin{example}
    We return to the restricted root system introduced in Example \ref{example J osp(2|2)}.  In this case $M_{\a}$ consists of polynomials $f\in\C[t]$ such that 
    \[
        f(n_{\alpha}+r)=(-1)^rf(n_{\alpha}-r) \ \text{ for } \ 1\leq r\leq n_{\alpha}.
    \]
    We saw in Example \ref{example J osp(2|2)} that $J_{\a}$ contains the subalgebra $A:=\C[t(t-2n_{\alpha})]$.  Then $M_{\a}$ is the $A$-submodule of $\C[t]$ generated by the polynomials
    \[
    g_1:=(t-1)(t-3)\cdots(t-2n_{\alpha}+1), \ \ \ g_2:=t(t-2)(t-4)\cdots(t-2n_{\alpha}).
    \]
    (See \cite[Thm.~9.1(iv)]{Sh2}.)
    Notice incidentally that $g_1g_2$ is the second generator of $J_{\a}$ as mentioned in Example \ref{example J osp(2|2)}.
\end{example}



\begin{example}
    Let $\g=\g\l(2|2)$, $\k=\o\s\p(2|2)$. Choose $\Delta^+=\{\varepsilon_1-\varepsilon_2,\varepsilon_1-\delta,\varepsilon_2-\delta\}$ where $m_{\varepsilon_i-\delta}=(0|2)$ and $m_{\varepsilon_1-\varepsilon_2}=(1|0)$. Then $\rho=-\frac{1}{2}\varepsilon_1-\frac{3}{2}\varepsilon_2+2\delta$. Note that $(\varepsilon_i,\varepsilon_i)=1=-2(\delta,\delta)$. Write $S(\a)=\C[t_{\epsilon_1},t_{\epsilon_2},t_{\delta}]$, where $t_{\varepsilon_1},t_{\varepsilon_2},t_{\delta}$ is dual to $\varepsilon_1,\varepsilon_2,\delta$.  If we consider the polynomial 
    \[
    f_0=(2t_{\varepsilon_1}+t_{\delta})(2t_{\varepsilon_2}+t_{\delta})+1,
    \]
    then it is straightforward to check that $f_0$ belongs to $t_{\rho}^*(M_{\a,\Sigma})$. Indeed, $f$ is invariant when swapping $\varepsilon_1,\varepsilon_2$, and we have
    \[
    f(\lambda+\varepsilon_1-\delta)=-f(\lambda-\varepsilon_1+\delta)
    \]
    whenever $(\lambda,\varepsilon_1-\delta)=0$. It follows that 
    \[
    f(\lambda)=t_{-\rho}^*(f_0)(\lambda)=f_0(\lambda+\rho)=(2t_{\varepsilon_1}+t_{\delta}+1)(2t_{\varepsilon_2}+t_{\delta}-1)+1
    \]
    lies in $M_{\a,\Sigma}$.  By Theorem \ref{main ghost theorem}, one can show that $f$ is the element of lowest degree in $M_{\a,\Sigma}$ (up to scalar).
    
\end{example}

\subsection{rank-one invariance conditions}
Recall the set $\Delta_{\Sigma,ad}$ of admissible roots introduced in Definition \ref{defn admissible roots}.
\begin{definition}
        For $\alpha\in\Delta_{\Sigma,ad}$, define $J_{\a,\Sigma,\alpha}\sub S(\a)$ to be those $f\in S(\a)$ such that:
        \begin{enumerate}
            \item if $\alpha$ is regular then $(r_{\alpha})_{\smallbullet}f=f$;
            \item if $\alpha$ is singular, then
        \[
        f(\lambda+r\alpha)=f(\lambda-r\alpha), \ \ \ 1\leq r\leq n_{\alpha},
        \]
        whenever $(\lambda+\rho,\alpha)=0$.
        \end{enumerate}
\end{definition}   

\begin{lemma}\label{lemma local desc J_a}
    We have 
    \[
    J_{\a,\Sigma}=\bigcap\limits_{\alpha\in\Delta_{\Sigma,ad}}J_{\a,\Sigma,\alpha}
    \]
\end{lemma}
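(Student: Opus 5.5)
One inclusion is immediate. Let $f\in J_{\a,\Sigma}$. Then $f$ is $W_{\smallbullet}$-invariant, so in particular $(r_\alpha)_{\smallbullet}f=f$ for every regular admissible $\alpha$. The singular condition in $J_{\a,\Sigma}$ is imposed for all singular roots, hence for the admissible ones. So $J_{\a,\Sigma}\sub\bigcap_{\alpha\in\Delta_{\Sigma,ad}}J_{\a,\Sigma,\alpha}$.

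For the reverse inclusion, let $f$ lie in the intersection. The argument has two steps.

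\emph{Step 1: $W_{\smallbullet}$-invariance.} By Lemma \ref{lemma conj props}(1) we have $\Pi\sub\Delta_{\Sigma,ad}$. Hence $f$ is $(r_\alpha)_{\smallbullet}$-invariant for every simple reflection of $\Delta_{reg}^+$. These reflections generate $W$, and $w\mapsto w_{\smallbullet}$ is an action for the fixed $\rho_\Sigma$. Therefore $f\in S(\a)^{W_{\smallbullet}}$.

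\emph{Step 2: the condition at an arbitrary singular root.} Let $\beta$ be a singular root. By Lemma \ref{lemma conj props}(3), $\beta=\pm w\alpha$ for some $w\in W$ and some singular $\alpha\in\Delta_{\Sigma,ad}$.

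The condition is unchanged under $\beta\mapsto-\beta$: replacing $r$ by $-r$ swaps the two sides. Also $n_{-\beta}=n_\beta$, since $\theta$ exchanges $\g_\beta$ and $\g_{-\beta}$. So we may assume $\beta=w\alpha$, and then $n_\beta=n_\alpha$ because $W$ is realized by $N_K(\a)$, which preserves multiplicities.

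Now take $\lambda$ with $(\lambda+\rho,\beta)=0$, and set $\mu:=w^{-1}(\lambda+\rho)-\rho$. Then $(\mu+\rho,\alpha)=0$. Define $g(\nu):=f(\nu-\rho)$; it is $W$-invariant in the unshifted sense by Step 1. We compute
\[
f(\lambda\pm r\beta)=g(\lambda+\rho\pm rw\alpha)=g\bigl(w(\mu+\rho\pm r\alpha)\bigr)=g(\mu+\rho\pm r\alpha)=f(\mu\pm r\alpha).
\]
The admissible condition at $\alpha$ gives $f(\mu+r\alpha)=f(\mu-r\alpha)$, and hence $f(\lambda+r\beta)=f(\lambda-r\beta)$. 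This shows $f\in J_{\a,\Sigma}$.

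The main point requiring care is Step 2. It needs the orbit statement of Lemma \ref{lemma conj props}(3), and it needs to confirm that $n_\alpha$ is constant on $\pm W$-orbits. The latter follows from the realization $W\cong N_K(\a)/C_K(\a)$ together with the involution $\theta$ mapping $\g_\alpha$ to $\g_{-\alpha}$. Given these facts, the rest is the direct change of variables above.
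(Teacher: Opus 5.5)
Your proposal is correct and follows the paper's approach: the paper's proof is just the remark that the lemma follows from Lemma \ref{lemma conj props}. Your argument spells that out, using part (1) ($\Pi\sub\Delta_{\Sigma,ad}$) to obtain $W_{\smallbullet}$-invariance and part (3) together with $W_{\smallbullet}$-invariance to move the singular-root condition along $\pm W$-orbits, and it correctly supplies the details the paper leaves implicit (sign symmetry, $n_{\pm w\alpha}=n_\alpha$, and the change of variables).
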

\begin{proof}
    This easily follows from Lemma \ref{lemma conj props}.
\end{proof}

\begin{definition}
    For $\alpha\in\Delta_{\Sigma,ad}$, define $M_{\a,\Sigma,\alpha}\sub S(\a)$ to be those $f\in S(\a)$ satisfying the condition that:
    \begin{enumerate}
        \item if $\alpha$ is regular, then $f\in T_{\alpha,\Sigma}S(\a)^{(r_{\alpha})_{\smallbullet}}$;
        \item if $\alpha$ is singular, then
       \[
       f(\lambda+r\alpha)=(-1)^{r}f(\lambda-r\alpha)
       \]
       for $1\leq r\leq n_{\alpha}$ and $(\lambda+\rho_{\Sigma},\alpha)=0$.
    \end{enumerate}
\end{definition}
Note that if $\alpha$ is isotropic, then condition (2) is equivalent to asking that $f\in T_{\alpha,\Sigma}S(\a)$.

\begin{lemma}\label{lemma local desc M_a}
\[
M_{\a,\Sigma}=\bigcap\limits_{\alpha\in\Delta_{\Sigma,ad}}M_{\a,\Sigma,\alpha}. 
\]
\end{lemma}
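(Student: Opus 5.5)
We prove the two inclusions separately, running the argument parallel to Lemma \ref{lemma local desc J_a} but keeping track of the factor $T_{\Sigma}$.

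\textbf{Inclusion $\subseteq$.} Let $f=gT_{\Sigma}\in M_{\a,\Sigma}$ with $g\in S(\a)^{W_{\smallbullet}}$, and let $\alpha\in\Delta_{\Sigma,ad}$.

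\emph{Case: $\alpha$ regular and purely even.} Then $T_{\alpha,\Sigma}=1$. The element $T_{\Sigma}$ is $(r_{\alpha})_{\smallbullet}$-invariant; if $\alpha\in\Pi$ this is Lemma \ref{lemma invce T}(2), and in general it follows by the same argument, transporting along the singular reflections that make $\alpha$ simple. Hence $f$ is $(r_\alpha)_{\smallbullet}$-invariant.

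\emph{Case: $\alpha$ regular and odd.} Write $f=T_{\alpha,\Sigma}\cdot\bigl(gT_{\Sigma}/T_{\alpha,\Sigma}\bigr)$. The second factor is $(r_\alpha)_{\smallbullet}$-invariant by Lemma \ref{lemma invce T}(3), so $f\in M_{\a,\Sigma,\alpha}$.

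\emph{Case: $\alpha$ singular, non-isotropic.} Then $\alpha\in\Delta_{\Sigma,ad}\subseteq W'(\Delta_{\Sigma,ad})$, so the defining condition of $M_{\a}$ applies directly.

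\emph{Case: $\alpha$ isotropic.} It suffices to show $T_{\alpha,\Sigma}\mid f$, which holds since $\alpha\in\Delta^+$ and $T_{\alpha,\Sigma}$ is a factor of $T_{iso}$.

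\textbf{Inclusion $\supseteq$.} Let $f\in\bigcap_{\alpha}M_{\a,\Sigma,\alpha}$.

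First we show $T_{\Sigma}\mid f$. Isotropic admissible roots give the factors $T_{\alpha,\Sigma}$ with $\alpha$ admissible, and regular odd $\alpha\in\Pi\subseteq\Delta_{\Sigma,ad}$ (Lemma \ref{lemma conj props}(1)) give $T_{\alpha,\Sigma}\mid f$. To get all factors, we show $f$ is $W'_{\smallbullet}$-invariant: simple purely even $\alpha\in\Pi$ are admissible, and for them condition (1) gives $(r_\alpha)_{\smallbullet}f=f$. Since $T_{\Sigma}$ is $W'_{\smallbullet}$-invariant (Lemma \ref{lemma invce T}(2)), the set of $\beta$ with $T_{\beta,\Sigma}\mid f$ is stable under $W'$ (up to sign/dot-shift). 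By Lemma \ref{lemma conj props}(2), every regular odd positive root is in $W'(\Pi\cap\Delta_{reg,odd})$. For isotropic roots we need $\Delta_{iso}^+\subseteq W'(\Delta_{\Sigma,ad})$ up to sign, an analogue of Lemma \ref{lemma conj props}(3) with $W$ replaced by $W'$; we would extract this from the classification, as in the proof of (2). Since the $T_{\beta}$ are pairwise coprime, $f=gT_{\Sigma}$.

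Next we show $g\in S(\a)^{W_{\smallbullet}}$. For $\alpha\in\Pi$ purely even, $g$ is invariant because both $f$ and $T_{\Sigma}$ are. For $\alpha\in\Pi$ regular odd, $f=T_{\alpha}h$ with $h$ invariant, and $T_\Sigma/T_\alpha$ is invariant, so $g=h/(T_\Sigma/T_\alpha)$ is invariant. Since $\Pi$ generates $W$, $g$ is $W_{\smallbullet}$-invariant.

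Finally, the singular conditions for roots in $W'(\Delta_{\Sigma,ad})$ follow from those for $\Delta_{\Sigma,ad}$ by $W'_{\smallbullet}$-invariance of $f$. The condition $(\lambda+\rho,\alpha)=0$ transforms correctly under the dot action, and the sign $(-1)^r$ is preserved.

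\textbf{Main obstacle.} The hardest step is the divisibility by all factors of $T_{\Sigma}$, in particular the orbit statement for isotropic roots under $W'$. It relies on case inspection of the classification of supersymmetric pairs, and may require a careful reading of \cite[Lem.~2.23]{Sh3}.
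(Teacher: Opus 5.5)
\textbf{Gap in the $\supseteq$ direction.} Your inclusion $\subseteq$ is fine. The hedge about regular admissible roots outside $\Pi$ is unnecessary: singular reflections do not change $\Delta^+_{reg}$, so the regular admissible roots are exactly $\Pi$.

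The gap is where you flag it. The claim $\Delta^+_{iso}\subseteq\pm W'(\Delta_{\Sigma,ad})$ is false in general, so no inspection of the classification will supply it. Take $(\g\l(m|n),\g\l(1|1)\times\g\l(m-1|n-1))$ with $m,n\geq 3$, which satisfies the Iwasawa assumption. Then:
\begin{itemize}
\item $\Delta=\{\pm\varepsilon,\pm2\varepsilon,\pm\delta,\pm2\delta,\pm\varepsilon\pm\delta\}$;
\item the multiplicities are $m_{\varepsilon}=(2m-4|2n-4)$, $m_{\delta}=(2n-4|2m-4)$ and $m_{2\varepsilon}=m_{2\delta}=(1|0)$;
\item $\pm\varepsilon\pm\delta$ are isotropic of multiplicity $(0|2)$.
\end{itemize}
Both elements of $\Pi=\{\varepsilon,\delta\}$ are odd, so $W'=1$. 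For $\Sigma=\{\varepsilon-\delta,\delta\}$, the only other base reachable by singular reflections is $\{\varepsilon,\delta-\varepsilon\}$. Hence $\Delta_{\Sigma,ad}=\{\varepsilon-\delta,\delta,\varepsilon\}$, and $\varepsilon+\delta\in\Delta^+_{iso}$ is not $\pm$ a $W'$-translate of any admissible root. So $W'_{\smallbullet}$-invariance of $f$ gives no control over divisibility by $T_{\varepsilon+\delta,\Sigma}$.

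\textbf{How to repair it.} Reorder your steps as the paper does.
\begin{enumerate}
\item Obtain $T_{reg}\mid f$ exactly as you already do, via $W'_{\smallbullet}$-invariance and Lemma~\ref{lemma conj props}(2).
\item Show that $f/T_{reg}$ is $W_{\smallbullet}$-invariant. Your argument for $g$ works verbatim with $T_{reg}$ in place of $T_{\Sigma}$. The reason is that $T_{iso}$ is $W_{\smallbullet}$-invariant by Lemma~\ref{lemma invce T}(1). Hence $T_{reg}$ is $W'_{\smallbullet}$-invariant, and $T_{reg}/T_{\alpha}$ is $(r_{\alpha})_{\smallbullet}$-invariant for odd $\alpha\in\Pi$.
\item Propagate the isotropic factors along full $W$-orbits. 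Since $w_{\smallbullet}$ sends $h_{\beta}+(\rho,\beta)$ to $h_{w^{-1}\beta}+(\rho,w^{-1}\beta)$, divisibility of the $W_{\smallbullet}$-invariant polynomial $f/T_{reg}$ by the admissible isotropic factors spreads to their $W$-orbits. Lemma~\ref{lemma conj props}(3), which is stated for $W$ and not $W'$, then gives $T_{iso}\mid f/T_{reg}$. In the example, $r_{\delta}(\varepsilon-\delta)=\varepsilon+\delta$.
\end{enumerate}
After this, $g=f/T_{\Sigma}$ is $W_{\smallbullet}$-invariant, and your final step on the singular non-isotropic conditions goes through unchanged.
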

\begin{proof}
    It is clear that $M_{\a,\Sigma}\sub\bigcap\limits_{\alpha\in\Delta_{sing}^+}M_{\a,\Sigma,\alpha}$, so it remains to deal with regular admissible roots, which are exactly the elements of $\Pi$.  For this, let $f\in M_{\a,\Sigma}$, so that we may write $f=gT_{\Sigma}$ such that $g$ is $W_{\smallbullet}$-invariant. 
 By Lemma \ref{lemma invce T}, $T_{\Sigma}$ is $W_{\smallbullet}'$-invariant, so we are left with the case that $\alpha\in\Pi$ is an odd regular root.  In this case we have $f/T_{\alpha}=g (T_{\Sigma}/T_{\alpha})$ which is $(r_{\alpha})_{\smallbullet}$-invariant by (3) of Lemma \ref{lemma invce T}.   

 Suppose conversely that $f\in\bigcap\limits_{\alpha\in\Delta_{\Sigma,ad}^+}M_{\a,\Sigma,\alpha}$.   Then $f$ is divisible by $T_{\alpha}$ for all odd roots $\alpha\in\Delta_{\Sigma,ad}^+$.  Since $f$ is $W_{\smallbullet}'$-invariant by condition (1), by (2) of Lemma \ref{lemma conj props} we obtain that $f$ is divisible by $T_{reg}$.  Further, (3) of Lemma \ref{lemma invce T} implies that $f/T_{reg}$ is $W_{\smallbullet}$-invariant.  On the other hand, $f$ is divisible by $T_{\alpha}$ for every admissible odd isotropic root $\alpha$, hence $f/T_{reg}$ must be divisible by $T_{iso}$ by (3) of Lemma \ref{lemma conj props}.  It follows that $f=T_{\Sigma}g$, and since $T_{iso}$ is $W_{\smallbullet}$-invariant, we obtain that $g$ is $W_{\smallbullet}$-invariant.   Finally, to obtain the invariance conditions for singular, non-isotropic roots $\alpha$ lying in $W'(\Delta_{\Sigma,ad})$, we may simply use the $W_{\smallbullet}'$-invariance of $f$.
\end{proof}

\begin{lemma}\label{lemma trans local invce condition}
     Let $\Sigma\sub\Delta$ be a base, choose a singular simple root $\alpha\in\Sigma$, and let $\beta\in\Delta$ be any root.  Then 
     \[
    J_{\a,\Sigma,\beta}=t_{m_{\alpha,1}\alpha}^*(J_{\a,r_{\alpha}\Sigma,\beta}),
    \]
    and 
    \[
    M_{\a,\Sigma,\beta}=t_{m_{\alpha,1}\alpha}^*(M_{\a,r_{\alpha}\Sigma,\beta}).
    \]
\end{lemma}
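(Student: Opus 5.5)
The argument is a direct verification, using two facts about the singular reflection $\Sigma\mapsto r_{\alpha}\Sigma$. By Lemma \ref{lemma rho refl}(2), $\rho_{r_{\alpha}\Sigma}=\rho_{\Sigma}+m_{\alpha,1}\alpha$. By Lemma \ref{lemma Talpha under transl}, $T_{\beta,\Sigma}=t^*_{m_{\alpha,1}\alpha}(T_{\beta,r_\alpha\Sigma})$ for $\beta$ regular or isotropic. Write $t=t^*_{m_{\alpha,1}\alpha}$, so that $t(f)(\lambda)=f(\lambda-m_{\alpha,1}\alpha)$. Since $t$ is an algebra automorphism, it suffices to show, for each $\beta$ and each type of condition (regular or singular), that $f$ satisfies the $r_\alpha\Sigma$-condition if and only if $t(f)$ satisfies the $\Sigma$-condition. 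A preliminary point is that $\beta$ ranges over a set where both sides make sense. Either we read the statement for $\beta$ admissible with respect to both bases, or we note that the defining conditions of $J_{\a,\Sigma,\beta}$ and $M_{\a,\Sigma,\beta}$ make sense for any $\beta\in\Delta$. Admissibility is preserved by singular reflections up to the sign of $\alpha$ itself, and the condition attached to $\pm\alpha$ is symmetric under $\alpha\mapsto-\alpha$.

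\emph{Regular $\beta$.} For $J$, the condition is $(r_\beta)_{\smallbullet}$-invariance. Lemma \ref{lemma invce under translation} says exactly that if $f$ is invariant under the $\rho_{r_\alpha\Sigma}$-shifted $r_\beta$, then $t(f)$ is invariant under the $\rho_\Sigma$-shifted $r_\beta$. The same computation run backwards (apply $t^{-1}$) gives the converse. For $M$, write $f=T_{\beta,r_\alpha\Sigma}\,g$ with $g$ invariant under the $\rho_{r_\alpha\Sigma}$-shifted $r_\beta$. Then $t(f)=t(T_{\beta,r_\alpha\Sigma})\,t(g)=T_{\beta,\Sigma}\,t(g)$ by Lemma \ref{lemma Talpha under transl}, and $t(g)$ is $\rho_\Sigma$-shifted invariant. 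The argument is again reversible.

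\emph{Singular $\beta$.} The condition for $r_\alpha\Sigma$ is $f(\mu+r\beta)=\epsilon^r f(\mu-r\beta)$ whenever $(\mu+\rho_{r_\alpha\Sigma},\beta)=0$, with $\epsilon=1$ for $J$ and $\epsilon=-1$ for $M$. Substitute $\mu=\lambda-m_{\alpha,1}\alpha$. Then $(\mu+\rho_{r_\alpha\Sigma},\beta)=(\lambda+\rho_\Sigma,\beta)$, so the two hyperplanes correspond. Moreover $t(f)(\lambda\pm r\beta)=f(\mu\pm r\beta)$, so the condition on $f$ is equivalent to $t(f)(\lambda+r\beta)=\epsilon^r t(f)(\lambda-r\beta)$ for $(\lambda+\rho_\Sigma,\beta)=0$. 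No compatibility between $\alpha$ and $\beta$ is needed here, since the translation is the same on both sides.

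\emph{Expected obstacle.} The only subtlety is bookkeeping: ensuring that the index set $\Delta_{\Sigma,ad}$ versus $\Delta_{r_\alpha\Sigma,ad}$ is handled, and that the regular case uses the correct version of $T_\beta$. Lemma \ref{lemma Talpha under transl} settles the latter, so I expect the proof to be a short direct computation.
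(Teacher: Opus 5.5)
Your proposal is correct and is essentially the paper's proof. For regular $\beta$ you use Lemma~\ref{lemma invce under translation} and Lemma~\ref{lemma Talpha under transl}, and for singular $\beta$ the substitution $\mu=\lambda-m_{\alpha,1}\alpha$ together with $\rho_{r_\alpha\Sigma}=\rho_\Sigma+m_{\alpha,1}\alpha$ (Lemma~\ref{lemma rho refl}). The only minor differences are cosmetic: you treat isotropic $\beta$ by the same direct substitution as the other singular roots (the paper groups it with the regular case via $T_\beta$), and you spell out the reverse inclusion, which the paper leaves implicit.
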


\begin{proof}
    If $\beta$ is regular or isotropic, this follows from Lemmas \ref{lemma Talpha under transl} and \ref{lemma invce under translation}.  If $\beta$ is singular non-isotropic, then suppose that $f\in J_{\a,r_{\alpha}\Sigma,\beta}$.  Then for $1\leq s\leq m_{\beta,1}/2$ we have for $(\mu+\rho_{\Sigma},\beta)=0$:
    \[
    t_{m_{\alpha,1}\alpha}^*(f)(\mu+s\beta)=f((\mu-m_{\alpha,1}\alpha)+s\beta).
    \]
    If we write $\lambda=\mu-m_{\alpha,1}\alpha$, then by Lemma \ref{lemma rho refl}, $(\lambda+\rho_{r_{\alpha}\Sigma},\beta)=0$, so that 
    \[
    f(\lambda+s\beta)=f(\lambda-s\beta)=f((\mu-m_{\alpha,1}\alpha)-s\beta)=t_{m_{\alpha,1}\alpha}^*(f)(\mu-s\beta).
    \]
    The proof for $M_{\a,\Sigma,\beta}$ works similarly.
\end{proof}

\section{Harish-Chandra projection and relationship to rank one}
\label{Sec:HC proj and rank one}

\subsection{Supersymmetric spaces}  Let $(\g,\k)$ be a supersymmetric pair as above, arising from an involution $\theta$.  Then let $G$ be a quasireductive supergroup, meaning that $G_0^\circ$ is reductive, with Lie superalgebra $\g$.  We assume that $\theta$ lifts to an involution of $G$, and continue to denote this involution by $\theta$.  Finally set $K\sub G$ to be a subgroup with $(G^\theta)^\circ\sub K\sub G^\theta$.  In particular, $K$ will be a quasireductive supergroup.

We call the homogeneous superspace $G/K$ a \emph{supersymmetric variety} (we refer to \cite{MZ} and \cite{MT} for details on homogeneous superspaces).  Notice that $G/K$ is smooth, affine, and admits an action by $G$ on the left by translation, $a:G\times G/K\to G/K$.  Write $\C[G/K]$ for the algebra of regular functions on $G/K$; then $\C[G/K]$ has the natural structure of a right $G$-module via pullback of functions along the action.  This means that $\C[G/K]$ is a left $\C[G]$-comodule, and the coaction is given by
\[
a^*:\C[G/K]\to \C[G]\otimes\C[G/K].
\]
Given $u\in\g$ viewed as an element of $T_eG$, we obtain a vector field on $G/K$ via the formula:
\[
(u\otimes 1)\circ a^*.
\]
The induced linear map $\g\to\operatorname{Der}(\C[G/K])$ is then an anti-homomorphism of Lie algebras, and thus we obtain an algebra homomorphism
\[
(\UU\g)^{op}\to\operatorname{D}(G/K),
\]
where $D(G/K)$ denotes the algebra of differential operators on $G/K$.  In particular, $\C[G/K]$ is a right $\UU\g$-module.

 \subsection{Vector space of distributions} Continuing with the above discussion, set $\operatorname{Dist}(G/K,eK)$ to be the vector space of distributions supported at $eK$ in $G/K$.  To be precise, if we write $\m_{eK}\sub\C[G/K]$ for the maximal ideal of $eK$ in $\C[G/K]$, then we have:
 \[
 \operatorname{Dist}(G/K,eK)=\bigcup\limits_{n\in\N}\left(\C[G/K]/\m_{eK}^n\right)^*.
 \]
 Then $D(G/K)$ acts on $\Dist(G/K,eK)$ on the right via:
 \[
 D\cdot \gamma=\gamma\circ D.
 \]
 Composing with our morphism $(\UU\g)^{op}\to D(G/K)$, we obtain a left action of $\UU\g$ on $\operatorname{Dist}(G/K,eK)$.

 \begin{lemma}
     We have a $\UU\g$-equivariant isomorphism
     \[
     \operatorname{Dist}(G/K,eK)\cong \UU\g/(\UU\g)\k.
     \]
 \end{lemma}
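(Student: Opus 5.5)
We build a pairing between $\UU\g$ and $\C[G/K]$, check that it descends to $\UU\g/(\UU\g)\k$ and lands in $\operatorname{Dist}(G/K,eK)$, and then show the resulting map is an isomorphism by a filtration argument.

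\textbf{Construction and well-definedness.} For $u\in\UU\g$, let $\delta_u\in\C[G/K]^*$ be $\delta_u(f)=(f\cdot u)(eK)$, where $f\cdot u$ is the right $\UU\g$-action on $\C[G/K]$ described above. Equivalently, pull $f$ back along $\pi:G\to G/K$ and evaluate the left-invariant differential operator attached to $u$ at $e$. Since $u$ has finite order, $\delta_u$ vanishes on $\m_{eK}^n$ for $n$ larger than the PBW degree of $u$, so $\delta_u\in\operatorname{Dist}(G/K,eK)$. If $x\in\k$, then $f\cdot x$ is a vector field that vanishes at $eK$, because $K$ stabilises $eK$ and $x$ is tangent to the fibre. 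More precisely, $\pi^*f$ is right $K$-invariant, so $(f\cdot ux)(eK)=((f\cdot u)\cdot x)(eK)$ is the derivative of the function $f\cdot u$ along the $\k$-direction at $eK$. The action map has the form $(u\otimes 1)\circ a^*$, and the orbit map $k\mapsto k\cdot eK$ is constant on $K$, so this derivative is zero. Hence $u\mapsto\delta_u$ factors through $\UU\g/(\UU\g)\k$.

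\textbf{Equivariance.} We check $\delta_{vu}=v\cdot\delta_u$ for $v\in\g$. This follows from the definitions: $v\cdot\gamma=\gamma\circ D_v$, and since $(\UU\g)^{op}\to D(G/K)$ is an algebra map, $f\cdot(vu)=(f\cdot v)\cdot u$, i.e.\ $D_{vu}=D_u\circ D_v$. Then
\[
\delta_{vu}(f)=\bigl(D_u(D_v f)\bigr)(eK)=\delta_u(D_v f)=(v\cdot\delta_u)(f).
\]
The order convention has to be tracked carefully here so that this is a left action.

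\textbf{Bijectivity.} Both sides are filtered: $\UU\g/(\UU\g)\k$ by PBW degree, and $\operatorname{Dist}$ by order, with $F_n=(\C[G/K]/\m_{eK}^{n+1})^*$. The map respects these filtrations, so it suffices to show it is an isomorphism on associated graded pieces. On the left, $\gr(\UU\g/(\UU\g)\k)\cong S(\g/\k)=S(\p)$ as supervector spaces by PBW, where $S$ is taken in the super sense. On the right, $G/K$ is smooth, so
\[
\gr\operatorname{Dist}(G/K,eK)\cong\bigl(\bigoplus_n\m^n/\m^{n+1}\bigr)^{*}\cong S(T_{eK}(G/K))=S(\g/\k)
\]
graded-dually, using that $\m/\m^2$ is the cotangent space and smoothness gives $\m^n/\m^{n+1}=S^n(\m/\m^2)$ in the super sense. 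The remaining step is to check that the induced graded map $S^n(\g/\k)\to S^n(\g/\k)$ is the identity, up to the standard symmetrisation identification. In degree one this holds because $\delta_x$ for $x\in\p$ is the tangent vector $x$ at $eK$. In higher degrees it follows because leading terms multiply: the top-order symbol of the composition of vector fields $x_1\cdots x_n$ at $eK$ is the symmetric product of their values.

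The main obstacle is this graded comparison in the super setting. It needs smoothness of $G/K$ to identify $\m^n/\m^{n+1}$ with $S^n$ of the cotangent space, including exterior powers on the odd part, together with care with signs. For smoothness and the local structure I will rely on the theory of homogeneous superspaces in \cite{MZ,MT}. One can also reduce to $G$: $\operatorname{Dist}(G,e)\cong\UU\g$ is standard, and since $\pi$ is a smooth surjection with fibre $K$, $\operatorname{Dist}(G/K,eK)=\operatorname{Dist}(G,e)/\operatorname{Dist}(G,e)\k$, which gives the claim directly.
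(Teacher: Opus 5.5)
Your proof is correct and follows the same route as the paper. The map is $u\mapsto u\cdot\delta_{eK}=\delta_{eK}\circ u$, it kills $(\UU\g)\k$, and bijectivity rests on $G/K$ being smooth and homogeneous together with PBW; your associated-graded comparison (identifying both graded spaces with $S(\p)\cong S(T_{eK}(G/K))$ and checking that the symbol map is the canonical pairing with $\m_{eK}^n/\m_{eK}^{n+1}$) just makes explicit what the paper compresses into ``surjective since $G/K$ is affine homogeneous'' and ``the kernel is clearly $(\UU\g)\k$ by PBW.''
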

 \begin{proof}
     Because $G/K$ is an affine homogeneous $G$-supervariety, the morphism
     \[
     \UU\g\to \Dist(G/K,eK), \ \ \ u\mapsto u\cdot\delta_{eK}=\delta_{eK}\circ u
     \]
     is surjective, where $\delta_{eK}$ is evaluation at $eK$.  The kernel of this morphism is clearly $(\UU\g)\k$ by the PBW theorem, so we are done.
 \end{proof}

\begin{remark}
    We make two remarks which will be used in the future:
    \begin{enumerate}
        \item The natural quotient map $\UU\g\to\UU\g/(\UU\g)\k$ is $\k$-equivariant, where $\k$ acts on $\UU\g$ by the adjoint action and on $\UU\g/(\UU\g)\k$ by left multiplication.  
        \item The action of $\k_{\ol{0}}$ on $\UU\g/(\UU\g)\k$ is locally finite.
    \end{enumerate}
\end{remark} 
 
	\subsection{Harish-Chandra projection}\label{section HC}  Choose a base $\Sigma\sub\Delta$ to obtain the nilpotent subalgebra $\n=\n_{\Sigma}\sub\g$ (see Section \ref{section iwasawa}).  Using the decomposition (\ref{decomp of Ug from Iwasawa}), we define the \emph{Harish-Chandra projection}
	\[
	HC:=HC_{\Sigma}:\UU\g/(\UU\g)\k\to S(\a)
	\]
to be given by projecting along $(\n\UU\g+(\UU\g)\k)/(\UU\g)\k$ \emph{and} postcomposing with $\sigma_{\a}$, the antipode on $S(\a)$.  Note that we will write $HC$ in place of $HC_{\Sigma}$ when the choice of base is clear.

\begin{remark}
    The antipode $\sigma_{\a}$ will make our eventual formulae in terms of weights more `recognizable'.  (The `issue' is that our description of distributions is via a left-action, meaning we have a right action of $\UU\g$ on the space of functions, so dominant weights are negative what we are accustomed to).
\end{remark}  
\label{spherical weights}
Let $\Lambda^+\sub\a^*$ denote the weights of $\a$ on $\C[G/K]^{\n}$.  For $\lambda\in\Lambda^+$, write 
\begin{equation}\label{eqn f lambda}
f_{\lambda}\in\C[G/K]
\end{equation}
for the highest weight function of weight $\lambda$ such that $f_{\lambda}(eK)=1$; then for $D\in\Dist(G/K,eK)$ we have, by definition:
	\begin{equation}\label{eqn dist}
	D(f_{\lambda})=HC(D)(-\lambda).
	\end{equation}
Note that $\Lambda^+\sub\a^*$ is Zariski dense by \cite[Cor.~5.10]{Sh5} or \cite[Prop.~6.4.2]{Shthesis}; we will use this later in the paper.

\subsection{Commutative diagram of projections} We have the following commutative diagram, where we denote by $HC_{\alpha}$ the Harish-Chandra projection for the pair $(\g(\alpha),\k(\alpha))$ with respect to the positive system $\{\alpha\}$.

\begin{equation}
\label{eq: comm. square}
\xymatrix{\UU\g\ar[rr] \ar[d] & &\UU\g(\alpha)\ar[d]   \\ \UU\g/(\UU\g)\k \ar[rr]^{\operatorname{res}_{\alpha}} \ar[dr]_{HC} & & \UU\g(\alpha)/(\UU\g(\alpha))\k(\alpha)\ar[dl]^{HC_{\alpha}}\\
& S(\a)
}
\end{equation}
Here, the two vertical arrows are the natural quotients.  The horizontal arrows are projections along $\n^{\alpha}\UU\g+(\UU\g) \r$  and $(\n^{\alpha}\UU\g+(\UU\g)\k)/(\UU\g)\k$, respectively (see (\ref{eqn ug decomp at alpha})). All maps in this top square are equivariant with respect to the $\k(\alpha)$-action.  Further, the map $\operatorname{res}_{\alpha}$ is equivariant with respect to the action of $\g(\alpha)$ on the left. 

\begin{cor} \label{cor:HC_alpha}
    For $D\in\UU\g/\UU\g\k$, we have 
    \[
    HC(D)=HC_{\alpha}(\operatorname{res}_{\alpha}(D)).
    \]
\end{cor}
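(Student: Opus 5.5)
The plan is to lift $D$ to $\UU\g$ and compare the two ways of projecting.

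Fix $u\in\UU\g$ lifting $D$. Using the decomposition (\ref{eqn ug decomp at alpha}), write
\[
u=u_\alpha+x\,r+n\,y,
\]
with $u_\alpha\in\UU\g(\alpha)$, $x\,r$ a sum of elements of $(\UU\g)\r$, and $n\,y$ a sum of elements of $\n^{\alpha}\UU\g$. By the commutative square in (\ref{eq: comm. square}), $\operatorname{res}_\alpha(D)$ is the class of $u_\alpha$ in $\UU\g(\alpha)/(\UU\g(\alpha))\k(\alpha)$. Next apply the Iwasawa decomposition for the rank-one pair, $\g(\alpha)=\k(\alpha)\oplus\a\oplus\n(\alpha)$. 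This gives
\[
\UU\g(\alpha)=S(\a)\oplus\bigl(\n(\alpha)\UU\g(\alpha)+(\UU\g(\alpha))\k(\alpha)\bigr),
\]
so we may write $u_\alpha=s+n'y'+z'k'$ with $s\in S(\a)$, $n'y'\in\n(\alpha)\UU\g(\alpha)$ and $z'k'\in(\UU\g(\alpha))\k(\alpha)$. By definition, $HC_\alpha(\operatorname{res}_\alpha D)=\sigma_\a(s)$.

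It then suffices to show that $u-s\in\n\UU\g+(\UU\g)\k$, because then $HC(D)=\sigma_\a(s)$ as well, by (\ref{decomp of Ug from Iwasawa}). The three pieces are handled separately, using $\n=\n(\alpha)\oplus\n^\alpha$ and $\k=\k(\alpha)\oplus\r$:
\begin{enumerate}
\item $n'y'\in\n(\alpha)\UU\g\sub\n\UU\g$.
\item $z'k'\in(\UU\g)\k(\alpha)\sub(\UU\g)\k$.
\item $x\,r\in(\UU\g)\r\sub(\UU\g)\k$, and $n\,y\in\n^\alpha\UU\g\sub\n\UU\g$.
\end{enumerate}
Summing these, $u-s$ lies in $\n\UU\g+(\UU\g)\k$, which is the required conclusion.

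Finally, the answer must not depend on the choice of lift $u$. If we change $u$ by an element of $(\UU\g)\k$, then $HC(D)$ is unchanged by the definition of $HC$. On the right-hand side, $\operatorname{res}_\alpha$ is already well defined on the quotient by the commutativity of the top square in (\ref{eq: comm. square}), so $HC_\alpha(\operatorname{res}_\alpha D)$ is unchanged too. There is no real obstacle here; the only point needing care is that both Harish-Chandra projections postcompose with the same antipode $\sigma_\a$, so the same $s$ gives the same value $\sigma_\a(s)$ on both sides.
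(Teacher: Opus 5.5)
Your proof is correct and is essentially the paper's argument. The paper states this corollary as an immediate consequence of the commutative diagram (\ref{eq: comm. square}), with no further proof. Your computation is exactly the check that the bottom triangle of that diagram commutes: you lift $D$, split off $u_\alpha$ via (\ref{eqn ug decomp at alpha}), extract its $S(\a)$-component via the rank-one Iwasawa decomposition, and use $\n(\alpha),\n^{\alpha}\sub\n$ and $\k(\alpha),\r\sub\k$.
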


\subsection{List of rank-one pairs}  

We will need the classification of rank-one supersymmetric pairs.  We refer to \cite{S} and the appendix of \cite{Sh4}.

\begin{lemma}\label{lemma classification rank one}
	Suppose that $(\g,\k)$ is rank one so that $\Sigma=\{\alpha\}$.  Then up to split factors fixed by $\theta$, $\g=\g'\times \a'$ where $\theta$ acts on the abelian subalgebra $\a'$ by $(-1)$, and we have the following possibilities for $(\g',\k)$:
 
	\begin{enumerate}
        
		\item $\alpha$ is regular: 

\hfill\break

\begin{center}
    \begin{tabular}{c|c||c|c}
        $\g'$                 & $\k$      &$m_\alpha$   &$m_{2\alpha}$\\ \hline\hline
        $\o\s\p(m|2n)$,    $m\geq 3$, $n\geq 0$       & $\o\s\p(m-1|2n)$       &$(m-2|2n)$   & $(0|0)$ \\ \hline
        $\o\s\p(m|2n)$,    $m\geq 0$, $n\geq 2$       & $\o\s\p(m|2n-2)\times\s\p(2)$       &$(4n-8|2m)$   & $(3|0)$ \\ \hline
        $\g\l(m|n)$,    $m\geq 2$, $n\geq 0$       & $\g\l(m-1|n)\times\g\l(1)$       &$(2m-4|2n)$   & $(1|0)$ \\ \hline
        $\s\l(2)\times\s\l(2)$          & $\s\l(2)$       &$(2|0)$   & $(0|0)$ \\ \hline
        $\o\s\p(1|2)\times\o\s\p(1|2)$           & $\o\s\p(1|2)$       &$(0|2)$   & $(2|0)$ \\ \hline
        $\mathfrak{f}(4)$ & $\s\o(4)$ & $(8|0)$ & $(7|0)$\\
        \hline

    \end{tabular}
\end{center}
\hfill\break

\item  If $\alpha$ is singular
  \hfill\break

\begin{center}
    \begin{tabular}{c|c||c|c}
        $\g'$                 & $\k$      &$m_\alpha$   &$m_{2\alpha}$\\ \hline\hline
        $\o\s\p(2|2n)$,    $n\geq 1$        & $\o\s\p(1|2n)$       &$(0|2n)$   & $(0|0)$ \\ \hline
        $\g\l(1|1)\times\g\l(1|1)$       & $\g\l(1|1)$       &$(0|2)$   & $(0|0)$ \\ \hline

    \end{tabular}
\end{center}
\hfill\break

\color{yellow}
\end{enumerate}
\end{lemma}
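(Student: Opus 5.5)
This is a classification statement, so the plan is to reduce it to known classifications rather than prove anything structurally new. I will not reprove the list of rank-one supersymmetric pairs; I will cite \cite{S} and the appendix of \cite{Sh4}, and the work consists of reading off the relevant data.

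\textbf{Step 1: splitting off the part outside the root system.} Since $(\g,\k)$ has rank one, $\Sigma=\{\alpha\}$ and $\Delta\sub\{\pm\alpha,\pm2\alpha\}$ (more precisely, $\Delta\sub\Q\alpha$). Let $\g'$ be the subalgebra generated by the root spaces $\g_{\pm\alpha}$ and $\g_{\pm2\alpha}$. It is an ideal and it is $\theta$-stable. Its complement lies in $\c(\a)=\m\oplus\a$, using the Iwasawa assumption (\ref{assumption iwasawa}). This complement splits as follows.
\begin{itemize}
\item A part fixed by $\theta$, contained in $\m$: these are the split factors fixed by $\theta$.
\item A part $\a'\sub\a$ on which $\theta$ acts by $-1$: this is the abelian factor.
\end{itemize}
Invariance of the form is what lets us realize this as a direct product decomposition.

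\textbf{Step 2: the indecomposable symmetric pairs of real rank one.} Next we run through the indecomposable supersymmetric pairs $(\g',\k)$ of real rank one, using the tables in \cite{S} and \cite[\S5]{Sh4}, and keep those satisfying the Iwasawa condition. For each surviving pair we compute the restricted root spaces $\g_\alpha$ and $\g_{2\alpha}$ directly from the standard matrix realizations. For example, for $\o\s\p(m|2n)\supset\o\s\p(m-1|2n)$, the space $\p$ is the standard module of dimension $(m-1|2n)$, and removing $\a$ leaves $m_\alpha=(m-2|2n)$.

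\textbf{Step 3: sorting regular versus singular.} We then sort the pairs by whether $m_{k\alpha,0}=0$ for all $k$, which is the definition of singular. The singular cases are those where $\g_{\pm\alpha}$ is purely odd and there is no $2\alpha$. These are exactly $\o\s\p(2|2n)\supset\o\s\p(1|2n)$ and the diagonal pair $\g\l(1|1)\times\g\l(1|1)\supset\g\l(1|1)$. For the diagonal (group-type) cases we use that $\p\cong\k$, so restricted multiplicities are twice the root multiplicities of $\k$ relative to its Cartan. This produces the $\s\l(2)$, $\o\s\p(1|2)$ and $\g\l(1|1)$ rows; for $\o\s\p(1|2)$ it gives $(0|2)$ on $\alpha$ and $(2|0)$ on $2\alpha$.

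\textbf{Main obstacle: completeness.} The hard part is showing that no other real rank-one pairs satisfy the Iwasawa assumption. Examples to rule out include $\g\l(m|n)\supset\g\l(m-1|n)\times\g\l(1)$ with small $m$, and the exceptional $\mathfrak f(4)\supset\s\o(4)$-type pair, whose multiplicities $(8|0)$ and $(7|0)$ need checking. For this I would rely on the criterion $\c(\a)_{\ol1}\subseteq\m$, i.e.\ the condition $\c(\a)=\m\oplus\a$ from \cite[Thm.~5.3]{Sh4}, and verify it case by case against the classification. That check is where I expect the most care to be needed.
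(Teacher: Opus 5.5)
Your overall route matches the paper's. The paper gives no proof of this lemma; it only quotes the classification in \cite{S} and the appendix of \cite{Sh4}. Your Steps 2--3 are the bookkeeping of reading off multiplicities from that list.

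\textbf{Step 1 fails as stated.} The subalgebra $I$ generated by the restricted root spaces is indeed a $\theta$-stable ideal. However, it need not be a direct factor of $\g$, and invariance of the form does not make it one: $I^{\perp}$ is a complementary ideal only when the form is nondegenerate on $I$. This fails in rows of the very table you are reproducing.
\begin{enumerate}
\item Take $\g=\g\l(1|1)\times\g\l(1|1)$ with the swap involution. Then $\g_{\alpha}$ is spanned by $(E_{12},0)$ and $(0,E_{21})$, so $I=\s\l(1|1)\times\s\l(1|1)$.
\item The central elements $(1,0)=[(E_{12},0),(E_{21},0)]$ and $(0,1)$ lie in $I\subseteq[\g,\g]$. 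Being central, they are orthogonal to $[\g,\g]$, so they lie in $I\cap I^{\perp}$.
\item Every element of $\c(\a)$ not in $I$ acts nontrivially on $I_{\ol{1}}$. Hence $\g$ is not of the form $I\times(\text{split factors})\times\a'$.
\item Worse, the pair $(I,I\cap\k)$ has $I\cap\p_{\ol{0}}=\C(1,-1)$, which is central in $I$, so that pair has rank zero.
\end{enumerate}
The same happens for $\g\l(m|m)\supset\g\l(m-1|m)\times\g\l(1)$, where $I=\s\l(m|m)$ contains the identity. Your argument does work when the form is nondegenerate on $I$, e.g.\ $\g\l(m|n)=\s\l(m|n)\times\C\cdot 1$ for $m\neq n$. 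In general, though, the normalization ``up to split factors fixed by $\theta$'' must be taken from the cited classification rather than derived from the form. Note that the table's $\g'$ is the full $\g\l$, not the ideal generated by the root spaces.

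\textbf{A smaller point on your last paragraph.} The row with multiplicities $(8|0)$, $(7|0)$ belongs to the list, so it is to be confirmed, not ruled out. When you check it, you will find it is the purely even pair $(\mathfrak f_4,\s\o(9))$ (the Cayley plane). Indeed $\dim\p$ must equal $1+8+7=16=\dim\mathfrak f_4-\dim\s\o(9)$. So the $\s\o(4)$ in the table is a misprint, and a check against it as written would fail.
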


\begin{remark}
    Note that the symmetric pair $(\mathfrak{f}(4),\s\o(4))$ does not arise as a rank-one subpair if $\g$ is  indecomposable and $\g_{\ol{1}}\neq0$, meaning that it will not be relevant for our work.  Nevertheless, we include it for completeness.
\end{remark}

\section{Invariant distributions} 
\label{Sec: invariant distributions}
 In this section we describe the ring of $\k$-invariant distributions and prove Theorem \ref{main thm}. To establish bijectivity, we describe the associated graded ring and the relation to the Chevalley restriction map. Additionally, we discuss the relation to the center of $\UU\g$ and its projection to $S(\a)$. Theorem \ref{main thm} is proven in two parts: first we show that the image of $HC$ admits the invariant conditions outlined in Section \ref{sec: invc conditions}. Then we show that $HC$ is bijective in Section \ref{sec: bijectivity inv dist}.

 When not stated, we assume there is a chosen simple root system $\Sigma\sub\Delta$.
	
	\subsection{The ring of $\k$-invariant distributions} Recall that the space of invariant distributions $\ZZ_{(\g,\k)}:=(\UU\g/(\UU\g)\k)^{\k}$ admits the natural structure of an algebra under multiplication induced from $\UU\g$. On the other hand, for a global supersymmetric space $G/K$, write $D^G(G/K)$ for the algebra of invariant differential operators on $G/K$.  The following is standard:
 \begin{lemma}
     We have an isomorphism of algebras $\ZZ_{(\g,\k)}\cong D^G(G/K)$ given by
     \[
     \gamma\mapsto \id\otimes \gamma\circ a^*,
     \]
     with inverse
     \[
    D\mapsto \delta_{eK}\circ D.
     \]
 \end{lemma}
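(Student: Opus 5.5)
We need to show that $\gamma\mapsto(\id\otimes\gamma)\circ a^*$ and $D\mapsto\delta_{eK}\circ D$ are mutually inverse algebra isomorphisms between $\ZZ_{(\g,\k)}=\Dist(G/K,eK)^{\k}$ (realized as $(\UU\g/(\UU\g)\k)^{\k}$ via the lemma identifying $\Dist(G/K,eK)$ with $\UU\g/(\UU\g)\k$) and $D^G(G/K)$.

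\textbf{The first map lands in $D^G(G/K)$.} For a distribution $\gamma$ supported at $eK$, the operator $D_\gamma:=(\id\otimes\gamma)\circ a^*$ sends $\C[G/K]$ into $\C[G]$. Concretely, $D_\gamma f(g)=\gamma(f(g\,\cdot))$. To see that $D_\gamma f$ descends to $G/K$, we need right $K$-invariance of $g\mapsto\gamma(f(g\,\cdot))$. Infinitesimally this invariance is exactly $\k$-invariance of $\gamma$ under left multiplication. For the group $K$, one also needs invariance under $K_{\ol 0}$, and not just $\k_{\ol 0}$. Here we either assume $K$ connected, or interpret $\ZZ_{(\g,\k)}$ with the natural $K$-action. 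Since $(G^\theta)^\circ\sub K$, the invariance under the identity component follows from $\k$-invariance together with the local finiteness remark. This descent step is the main subtlety, including the super-sign bookkeeping in the coaction.

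\textbf{$D_\gamma$ is a differential operator and is $G$-invariant.} Since $\gamma$ kills $\m_{eK}^n$ for some $n$, $D_\gamma$ has order at most $n-1$; one checks this by writing $a^*$ locally via the Taylor expansion at $eK$. $G$-invariance follows from coassociativity of $a^*$: $D_\gamma$ commutes with left translations because $\gamma$ acts only on the second tensor factor.

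\textbf{The two maps are mutually inverse.} The composite $\gamma\mapsto\delta_{eK}\circ D_\gamma=\gamma$ is the identity, by the counit axiom $(\epsilon\otimes\id)\circ a^*=\id$. Conversely, fix $D\in D^G(G/K)$ and set $\gamma=\delta_{eK}\circ D$.
\begin{itemize}
\item $\gamma$ is supported at $eK$, since $D$ has finite order.
\item $\gamma$ is $\k$-invariant, by $G$-invariance of $D$ under $K\sub\operatorname{Stab}(eK)$.
\item $D=D_\gamma$: a $G$-invariant operator is determined by its composition with evaluation at the base point, because $(Df)(gK)=\delta_{eK}(D(g^{-1}\cdot f))$. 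This identity is exactly $D_\gamma f$ evaluated at $gK$. To make it precise in the super setting, apply it to functorial points, or equivalently compare both sides after applying $a^*$.
\end{itemize}

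\textbf{Multiplicativity.} Finally, check that the bijection respects multiplication: $D_{\gamma_1}D_{\gamma_2}=D_{\gamma_1\gamma_2}$, where the product on $\ZZ_{(\g,\k)}$ is induced from $\UU\g$. This reduces to the convolution identity $(\id\otimes\gamma_1\gamma_2)\circ a^*=(\id\otimes\gamma_2)\circ a^*$ composed appropriately with $(\id\otimes\gamma_1)\circ a^*$. That identity follows from coassociativity together with the description of the $\UU\g$-action on distributions as a right action via $(\UU\g)^{op}\to D(G/K)$. The $op$ exactly compensates for the order reversal, so one must be careful with this ordering convention.
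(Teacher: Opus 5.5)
The paper gives no proof of this lemma (it is quoted as standard), and your argument is the standard coaction argument. It is correct.

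Your connectedness caveat is genuinely needed. The two maps identify $D^G(G/K)$ with the $K$-invariant distributions at $eK$. When $K_{\ol 0}$ is disconnected, these can be strictly smaller than $\ZZ_{(\g,\k)}$. For example, for the diagonal pair with $G=O(2m)\times O(2m)$ and $K=G^\theta$, one gets $Z(\UU\s\o(2m))^{O(2m)}$, which misses the Pfaffian-type central element. So the lemma should be read with $K_{\ol 0}$ connected, or with $\ZZ_{(\g,\k)}$ replaced by its $K$-invariants.

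Two steps in your sketch are worth tightening.

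In the multiplicativity step, $\gamma_1$ is a distribution on $G/K$, so it cannot be applied directly to the middle $\C[G]$ factor that coassociativity produces. Lift it to $\tilde\gamma_1\in\UU\g=\Dist(G,e)$. Then use $(\Delta_G\otimes\id)\circ a^*=(\id\otimes a^*)\circ a^*$, with $\Delta_G$ the comultiplication of $\C[G]$, to get
\[
D_{\gamma_1}\circ D_{\gamma_2}=(\id\otimes\tilde\gamma_1\otimes\gamma_2)\circ(\id\otimes a^*)\circ a^*=D_{\tilde\gamma_1\cdot\gamma_2}.
\]
Here $\tilde\gamma_1\cdot\gamma_2$ is exactly the product in $\ZZ_{(\g,\k)}$. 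The same identity with $\tilde\gamma_1=x\in\k$ says that the right $\k$-derivatives of $D_\gamma f$ are $D_{x\cdot\gamma}f=0$. That is precisely your descent step.

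For the order bound, the Taylor expansion can be replaced by the identity
\[
[D_\gamma,\phi]f=(\id\otimes\gamma)\bigl((a^*\phi-\phi\otimes 1)\,a^*f\bigr),
\]
with $\phi$ regarded as a function on $G$. Since $a^*\phi-\phi\otimes 1\in\C[G]\otimes\m_{eK}$, an $N$-fold iterated commutator lands in $\C[G]\otimes\m_{eK}^{N}$. It is therefore killed by $\gamma$ as soon as $\gamma$ kills $\m_{eK}^N$.
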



	\subsection{Relationship with the Chevalley restriction map and injectivity of $HC$}
 \label{sec: Chevalley map and injectivity}
    Write $\lambda:S(\p)\to\UU\g$ for the symmetrization map, which we note is $\k$-equivariant under the adjoint action.  The following is clear.
\begin{lemma}
    We have a $\k$-equivariant decomposition $\UU\g=\lambda(S(\p))\oplus\UU\g\k$.  In particular, the composition of $\lambda$ with the natural projection $\UU\g\to\UU\g/\UU\g\k$ is an isomorphism giving the following diagram:
    \[
    \xymatrix{S(\p)^{\k} \ar[r]^{\lambda} \ar[dr]_{\sim} & (\UU\g)^{\k} \ar[d]^{\pi} \\ & \ZZ_{(\g,\k)}}
    \]
    Here the diagonal map is an isomorphism of filtered vector spaces.
\end{lemma}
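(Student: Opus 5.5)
We first establish the direct sum decomposition $\UU\g=\lambda(S(\p))\oplus\UU\g\k$ and then take $\k$-invariants. By PBW applied to the vector space decomposition $\g=\p\oplus\k$, the multiplication map $S(\p)\otimes\UU\k\to\UU\g$, $a\otimes b\mapsto\lambda(a)b$, is a linear isomorphism. One checks this on associated graded: $\gr$ of the map is the multiplication $S(\p)\otimes S(\k)\to S(\g)$, which is an isomorphism of supercommutative algebras, since $\lambda$ is the identity on $S(\p)$ modulo lower filtration. Splitting $\UU\k=\C\oplus\UU\k\,\k$ then gives
\[
\UU\g=\lambda(S(\p))\oplus\lambda(S(\p))\UU\k\,\k=\lambda(S(\p))\oplus\UU\g\,\k,
\]
where the last equality uses $\UU\g=\lambda(S(\p))\UU\k$.

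Next we record equivariance. The symmetrization $\lambda:S(\g)\to\UU\g$ is $\g$-equivariant for the adjoint actions, and $\p$ is $\k$-stable, so $\lambda|_{S(\p)}$ is $\k$-equivariant. The subspace $\UU\g\,\k$ is $\ad\k$-stable because
\[
[x,u y]=[x,u]y+(-1)^{\ol{x}\,\ol{u}}u[x,y]\in\UU\g\,\k
\]
for $x,y\in\k$. Hence both summands are $\k$-submodules, and the decomposition is $\k$-equivariant.

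By the first paragraph, $\pi\circ\lambda:S(\p)\to\UU\g/\UU\g\k$ is a $\k$-equivariant linear isomorphism, where $\k$ acts on the quotient by left multiplication, which agrees with the adjoint action by the earlier Remark. A $\k$-equivariant isomorphism restricts to an isomorphism on $\k$-invariants, so $S(\p)^\k\to(\UU\g/\UU\g\k)^\k=\ZZ_{(\g,\k)}$ is an isomorphism that factors through $(\UU\g)^\k$, since $\lambda$ maps invariants to invariants. This is exactly the commutative triangle in the statement.

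For filtrations, take the degree filtration on $S(\p)$ and the PBW filtration on the quotient. The map $\lambda$ sends $S^{\le n}(\p)$ into $\UU_{\le n}\g$, so the isomorphism is filtered. Its associated graded is the map $S(\p)\to S(\g)/S(\g)\k\cong S(\p)$, which is the identity, so the inverse is filtered as well. The only point needing care is that $\gr(\UU\g\,\k)=S(\g)\k$, which follows from the PBW argument above. No step is a genuine obstacle; the main thing to verify is the sign conventions in the super setting for $\ad\k$-stability and for the equivariance of symmetrization.
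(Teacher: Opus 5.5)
Your proof is correct. The paper gives no argument (it only says the lemma ``is clear''), and your route fills this in with the standard reasoning: the PBW factorization $\UU\g\cong\lambda(S(\p))\otimes\UU\k$, the splitting $\UU\k=\C\oplus\UU\k\,\k$, and the $\ad\k$-equivariance of symmetrization. The filtered statement on invariants also follows as you indicate: the isomorphism $S(\p)\to\UU\g/\UU\g\k$ carries $S^{\le n}(\p)$ onto the $n$-th filtered piece, and this persists after intersecting both sides with $\k$-invariants.
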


	By the above, we have an isomorphism of vector spaces $\lambda:S(\p)^{\k}\to \ZZ_{(\g,\k)}$; on the other hand, we have a natural map $CR^*:S(\p)^{\k}\to S(\a)$ given by projection along $\p\cap(\k+\n)\sub\p$.  Notice that our chosen invariant form induces natural isomorphisms $\p\cong \p^*,\a\cong\a^*$, and under these isomorphisms we have a commutative square:
 \[
 \xymatrix{S(\p)^{\k}\ar[d]^{CR^*}\ar[r]^{\sim} & S(\p^*)^{\k} \ar[d]^{CR} \\
 S(\a)\ar[r]^{\sim} & S(\a^*)}
 \]
 where $CR$, the Chevalley restriction map, is the map induced by the projection $\p^*\to\a^*$.

 For a polynomial $f\in S(\a)$, let us write $T(f)$ for the leading term term of $f$.  If $f=0$, set $T(f)=0$.

 \begin{lemma}\label{assoc_graded}
     Let $p\in S(\p)^{\k}$ be homogeneous.  Then
     \[
     CR^*\circ\sigma(p)=T\circ HC\circ \pi\circ\lambda(p),
     \]
     where $\sigma$ is the antipode on $S(\a)$.  Stated otherwise, the following diagram commutes:
     \[
     \xymatrix{
    S(\a) \ar[d]_{\sigma} & S(\p)^{\k}\ar[l]_{CR^*} \ar[r]^{\lambda} & (\UU\g)^{\k}\ar[d]^{\pi} \\
    S(\a) & S(\a)\ar[l]_{T} & \ZZ_{(\g,\k)}\ar[l]_{HC}
     }
     \]
 \end{lemma}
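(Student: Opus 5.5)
The plan is to compare the Harish-Chandra projection with its associated graded map for the PBW filtration, and then identify that graded map with the Chevalley restriction map. Let $p\in S(\p)^{\k}$ be homogeneous of degree $d$. Since $\lambda$ is symmetrization, $\lambda(p)\in\UU_d\g$, and its image in $\gr_d\UU\g\cong S^d(\g)$ is exactly $p$.

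\emph{Step 1: the projection respects the filtration.} Choose ordered bases of $\n$, $\a$ and $\k$, which together give a basis of $\g$ by the Iwasawa decomposition (\ref{iwasawa decomp}). By PBW, $\UU_d\g$ is spanned by ordered monomials $x^{I}h^{J}y^{K}$ of total degree $\le d$, with $x^I$ a monomial in $\n$, $h^J$ in $\a$ and $y^K$ in $\k$. Such a monomial lies in $S(\a)$ if $I=K=0$. If $I\neq0$ it lies in $\n\UU_{d-1}\g$, and if $K\neq 0$ it lies in $(\UU_{d-1}\g)\k$. Hence (\ref{decomp of Ug from Iwasawa}) restricts to a decomposition
\[
\UU_d\g=S^{\le d}(\a)\oplus\bigl(\n\UU_{d-1}\g+(\UU_{d-1}\g)\k\bigr)\cap\UU_d\g .
\]
So the projection $P:\UU\g\to S(\a)$ preserves filtrations. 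Its associated graded map is the projection $S(\g)\to S(\a)$ along the ideal $\n S(\g)+S(\g)\k$, which is the algebra map induced by the linear projection $\g=\k\oplus\a\oplus\n\to\a$. This is the one point that needs care: one must check that the degree-$d$ symbol of $P(u)$ depends only on the symbol of $u$. This follows because the monomials of degree exactly $d$ with $I$ or $K$ nonzero have symbols in $\n S^{d-1}(\g)+S^{d-1}(\g)\k$.

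\emph{Step 2: identify the graded map on $S(\p)$.} Restricted to $\p$, the linear projection $\g\to\a$ along $\k\oplus\n$ has kernel $\p\cap(\k+\n)$ and is the identity on $\a$. It is therefore exactly the projection defining $CR^*$. Consequently the degree-$d$ component of $P(\lambda(p))$ equals $CR^*(p)$, and all other components have degree $<d$. Passing to the quotient by $(\UU\g)\k$ does not affect this, since $HC\circ\pi=\sigma_{\a}\circ P$ by definition. The antipode acts on the degree-$d$ component by $(-1)^d$, which coincides with $\sigma$ applied to the homogeneous element $CR^*(p)$.

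\emph{Step 3: the top component is the leading term.} It remains to see that this degree-$d$ component is the leading term $T$, i.e.\ that it is nonzero when $p\ne0$; otherwise $T$ would pick out a lower-degree term. Here I would invoke the Chevalley restriction theorem for supersymmetric pairs \cite{A2,RSS}: $CR$, and hence $CR^*$, is injective on $S(\p)^{\k}$. So $CR^*(p)\neq0$ for $p\neq0$, it is the top-degree part of $HC(\pi\lambda(p))$, and $T\circ HC\circ\pi\circ\lambda(p)=\sigma(CR^*(p))$. For $p=0$ both sides vanish.

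I expect the main obstacle to be Step 1, namely verifying with the PBW ordering ($\n$ on the left, $\k$ on the right) that the non-commutative projection has the commutative projection as its symbol. After that, Steps 2 and 3 are formal.
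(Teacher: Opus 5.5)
Your proof is correct and takes essentially the paper's approach: the paper's proof is just ``straightforward from our definitions'', and your argument spells that out. The PBW-filtered projection along $\n\UU\g+(\UU\g)\k$ has as its symbol the commutative projection $S(\g)\to S(\a)$ induced by $\g=\k\oplus\a\oplus\n\to\a$, and restricted to $S(\p)$ this is $CR^*$. Your Step 3 also correctly points out that matching the top-degree component with the leading term $T$ requires $CR^*(p)\neq 0$ for $p\neq 0$, i.e.\ the injectivity of $CR$ from \cite{RSS}; the paper invokes this only in the corollary that follows.
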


 \begin{proof}
     This is straightforward from our definitions.
 \end{proof}

    The image
$CR^*(S(\p)^{\k})$ in $S(\a)$ was described in \cite{A2} and \cite{RSS}. We use this description to show that $HC$ is surjective in \cref{sec: bijectivity inv dist}. We note that this map does not depend on the choice of $\Sigma$.
We conclude the first part of Theorem \ref{main thm}, namely the injectivity of $HC$ along with a few useful results.

 \begin{cor}\label{cor assoc graded}
\begin{enumerate}
    \item We have $\operatorname{gr}HC=\sigma\circ CR^*$, so that the functor $\operatorname{gr}$ has the following effect:
    	\begin{center}
			\begin{tikzpicture}[decoration=snake]
				\node at (-2,1.5) (Z g k) {$\ZZ_{(\g,\k)}$};
				\node at (2,1.5) (Sa up) {$S(\a)$};
                \node at (-2,-1.5) (S g g) {$S(\p)^{\k}$};
                \node at (2,-1.5) (Sa down) {$S(\a)$};
                
                \draw[->] (Z g k) -- (Sa up) node[above, midway] {$HC$};
                \draw [->] (S g g) -- (Sa down) node[above, midway] {$\sigma\circ CR^*$};
				
				
				\draw[->,decorate] (0,1) -- (0,-0.5);
				
				\node at (0.4,0.2) (gr) {gr};
				
			\end{tikzpicture}
		\end{center}     
    \item $HC:\ZZ_{(\g,\k)}\to S(\a)$ is injective;
    \item Consider the filtration on $HC(\ZZ_{(\g,\k)})$ induced by the natural one on $\ZZ_{(\g,\k)}$ coming from distributions.  Then we have:
    \[
    \operatorname{gr}HC(\ZZ_{(\g,\k)})\cong CR^*(S(\p)^{\k}).
    \]
\end{enumerate}
 \end{cor}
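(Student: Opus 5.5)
The plan is to deduce all three parts from Lemma \ref{assoc_graded}, together with the fact that the filtration on $\ZZ_{(\g,\k)}$ is transported from the degree filtration on $S(\p)^{\k}$ by the filtered isomorphism $\pi\circ\lambda$.

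For (1), fix $D\in\ZZ_{(\g,\k)}$ of filtration degree $d$, and write $D=\pi\lambda(p)$ with $p\in S(\p)^{\k}$ of degree at most $d$. Decompose $p=\sum_{k\le d}p_k$ into homogeneous components. Since $\k$ acts on $S(\p)$ by degree-preserving maps, each $p_k$ is again $\k$-invariant. Two observations then reduce the claim to Lemma \ref{assoc_graded}:
\begin{itemize}
\item By the PBW-type decomposition (\ref{decomp of Ug from Iwasawa}), $HC$ does not raise filtration degree, because projecting a PBW monomial of degree $k$ along $\n\UU\g+(\UU\g)\k$ gives a polynomial of degree at most $k$.
\item By Lemma \ref{assoc_graded}, the degree-$d$ component of $HC(\pi\lambda(p))$ is exactly $\sigma CR^*(p_d)$, since the lower $p_k$ contribute only in degree $<d$.
\end{itemize}
Hence, on associated graded pieces, the map $\operatorname{gr}_d\ZZ_{(\g,\k)}\cong S^d(\p)^{\k}\to S^d(\a)$ induced by $HC$ is $\sigma\circ CR^*$. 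This is precisely (1).

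For (2), the key input is the injectivity of $CR^*$ on $S(\p)^{\k}$, which I take from the Chevalley restriction theorem for supersymmetric pairs \cite{A2,RSS}. Note that $CR^*$ corresponds to $CR$ under the isomorphisms $\p\cong\p^*$ and $\a\cong\a^*$, and $\sigma$ is an automorphism. Now suppose $D\neq0$ and $HC(D)=0$. Let $d$ be the exact filtration degree of $D$, so that $p_d\neq 0$. By (1), the degree-$d$ part of $HC(D)$ is $\sigma CR^*(p_d)$, which is nonzero, a contradiction. This step is the one real dependency on outside input: injectivity of the super Chevalley restriction map needs the Iwasawa assumption (\ref{assumption iwasawa}). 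I expect it to be available in \cite{A2,RSS} in exactly that setting. If not, one would have to argue directly via density of $\Ad(K)\a$-type arguments, and that would be the main obstacle.

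For (3), give $HC(\ZZ_{(\g,\k)})$ the filtration transported from $\ZZ_{(\g,\k)}$. Because $HC$ is injective, $\operatorname{gr}HC(\ZZ_{(\g,\k)})\cong\operatorname{gr}\ZZ_{(\g,\k)}\cong S(\p)^{\k}$. Applying (1), the degree-$d$ piece maps isomorphically onto $\sigma CR^*(S^d(\p)^{\k})$. This is equivalently the span of the leading terms $T(HC(D))$ for $D$ of exact degree $d$, and $\sigma$ acts on it as the scalar $(-1)^d$. Summing over $d$ gives $\operatorname{gr}HC(\ZZ_{(\g,\k)})\cong CR^*(S(\p)^{\k})$, as graded spaces and in fact as algebras, since $HC$ is multiplicative on $\ZZ_{(\g,\k)}$.
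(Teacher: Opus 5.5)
Your proposal is correct and follows essentially the same route as the paper. The paper gets (1) from Lemma \ref{assoc_graded}, (2) from injectivity of the Chevalley restriction map (citing Prop.~4.2 of \cite{RSS}) combined with a leading-term argument, and (3) by combining (1) and (2). Your direct leading-term proof of injectivity fits slightly better than the paper's appeal to Lemma \ref{lemma assoc gr reflects}, since that lemma is stated only for bijective maps.
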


 \begin{proof}
     Part (1) follows from Lemma \ref{assoc_graded}, and (2) follows from the injectivity of $CR$ (Prop.~4.2 of \cite{RSS}, see also Prop.~1 of \cite{Sergeev}) and Lemma \ref{lemma assoc gr reflects}.  Part (3) follows from (1) and (2).
 \end{proof}

	

\subsection{The center of $\UU\g$}

We recall the Harish-Chandra theorem for the center $Z(\g)$ of $\UU\g$. Let $\g=\u^-\oplus\h\oplus\u^+$ be a triangular decomposition of $\g$. We assume that $\u^+$ contains $\n$.
We define $\widetilde{HC}:\UU \g\rightarrow S(\h)$ to be the `usual' Harish-Chandra projection, given by projecting along $\u^-\UU\g+(\UU\g)\u^+$.

\begin{lemma}
    The following diagram is commutative:
\[
\xymatrix{Z(\g)\ar[rr]^{\operatorname{proj}} \ar[d]_{\widetilde{HC}}  & & \ZZ_{(\g,\k)}  \ar[d]_{HC}  \\ 
S(\h) \ar[rr]^{\operatorname{proj}_{\a}} & & J_{\a}
}
\]
Here, $\operatorname{proj}$ is the composition of $Z(\g)\sub \UU\g\to \UU\g/(\UU\g)\k$ which lands in $\ZZ_{(\g,\k)}$, and $\operatorname{proj}_{\a}$ is the quotient arising from the decomposition $\h=\a\oplus\mathfrak{t}$.
\end{lemma}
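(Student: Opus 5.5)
The plan is to compare the two projections by evaluating on spherical highest weight functions, and then to use Zariski density of $\Lambda^+$. Let $z\in Z(\g)$ and let $D=\operatorname{proj}(z)=z\cdot\delta_{eK}$. By (\ref{eqn dist}), for every $\lambda\in\Lambda^+$ we have
\[
HC(D)(-\lambda)=D(f_{\lambda})=(z\cdot f_{\lambda})(eK),
\]
where $z$ acts through the right $\UU\g$-action on $\C[G/K]$. It therefore suffices to show that $z$ acts on $f_{\lambda}$ by the scalar $\operatorname{proj}_{\a}(\widetilde{HC}(z))(-\lambda)$, with conventions and antipodes matched. Granting this, the two polynomials $HC(\operatorname{proj}(z))$ and $\operatorname{proj}_{\a}(\widetilde{HC}(z))$ agree on $-\Lambda^+$. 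Since $\Lambda^+$ is Zariski dense in $\a^*$ (\cite[Cor.~5.10]{Sh5}), they are equal, and the square commutes.

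To compute the scalar, I first check that $f_{\lambda}$ is a highest weight vector for the Borel $\h\oplus\u^+$, not just a vector annihilated by $\n$. The $\UU\g$-submodule generated by $f_\lambda$ lies in a finite-dimensional spherical module $V$, and $f_\lambda$ spans its $\n$-invariants of $\a$-weight $\lambda$. I then plan to argue that $\c(\a)$ acts on this line by the character $\lambda$ on $\a$ and trivially on $\m$. Here $\m\subseteq\k$, and $\m$ acts on $V^{\n}$; the line is $\m$-stable because $[\m,\n]\subseteq\n$ and $[\m,\a]=0$. Evaluation at $eK$ is $\k$-invariant and nonzero on $f_\lambda$, so $\m$ must act on the line by $0$; in particular $\mathfrak t$ acts by $0$. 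Because $\u^+\cap\c(\a)\subseteq\m$, $f_\lambda$ is then annihilated by $\u^+=\n\oplus(\u^+\cap\c(\a))$, and $\h$ acts on it by the weight $\tilde\lambda$ equal to $\lambda$ on $\a$ and $0$ on $\mathfrak t$.

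Given this, the standard argument applies. Writing $z\in\widetilde{HC}(z)+\u^-\UU\g\cap\UU\g\u^+$ (or the mirror version adapted to the right action), $z$ acts on the highest weight vector $f_\lambda$ by $\widetilde{HC}(z)$ evaluated at $\tilde\lambda$, up to the sign conventions built into the antipode. Since $\tilde\lambda$ vanishes on $\mathfrak t$, evaluating at $\tilde\lambda$ equals evaluating $\operatorname{proj}_\a(\widetilde{HC}(z))$ at $\lambda$. I must track the right-versus-left action and the antipode $\sigma_\a$ carefully so that the sign of $\lambda$ matches (\ref{eqn dist}). This bookkeeping is where I expect the real care to be needed, and it is the main obstacle, along with the claim that $\m$ acts trivially on the $\n$-highest weight line.

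Finally, the square is stated with target $J_\a$. This part will be handled by the first half of the proof of Theorem \ref{main thm}, which shows $HC(\ZZ_{(\g,\k)})\subseteq J_\a$. Alternatively it can be seen directly: $\operatorname{proj}_\a(\widetilde{HC}(z))$ inherits $W_{\smallbullet}$-invariance from the Weyl group invariance of $\widetilde{HC}(z)$, and the singular conditions can be checked via the rank-one reduction of Corollary \ref{cor:HC_alpha}.
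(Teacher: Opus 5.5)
\paragraph{Same route as the paper, but the scalar step fails.} Your route is the paper's: evaluate $\operatorname{proj}(z)$ on the spherical highest weight functions $f_\lambda$ and use Zariski density of $\Lambda^+$. Your check that $f_\lambda$ is killed by all of $\u^+=\n\oplus(\u^+\cap\m)$ and that $\mathfrak t\subseteq\m$ acts by $0$ is correct. That argument uses $\k$-invariance of evaluation at $eK$ together with the line $\C f_\lambda$ being $\m$-stable, and the paper leaves it implicit. The problem is the step you defer as ``bookkeeping'': it is not a sign issue, and it does not produce $\widetilde{HC}(z)$.

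\paragraph{What the evaluation argument actually gives.} For the right action, $f_\lambda\cdot(xu)=(f_\lambda\cdot x)\cdot u=0$ for $x\in\u^+$. So the decomposition $z\in\widetilde{HC}(z)+\u^-\UU\g\cap(\UU\g)\u^+$ is of no use. The usable one is $z=p'+\sum_i x_iu_i$ with $x_i\in\u^+$ and $p'\in S(\h)$, which exists because $z$ has weight zero. This gives $f_\lambda\cdot z=p'(\tilde\lambda)f_\lambda$, and hence $HC(\operatorname{proj}(z))=\operatorname{proj}_{\a}(\sigma(p'))$. Now apply the antipode $S$ of $\UU\g$, the anti-automorphism with $S(x)=-x$ on $\g$. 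You get $S(z)\in\sigma(p')+(\UU\g)\u^+$, so $\sigma(p')=\widetilde{HC}(S(z))$. What your argument, and the paper's, actually proves is
\[
HC(\operatorname{proj}(z))=\operatorname{proj}_{\a}\bigl(\widetilde{HC}(S(z))\bigr),\qquad z\in Z(\g).
\]

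\paragraph{A counterexample to the statement as written.} The formula above differs from $\operatorname{proj}_{\a}(\widetilde{HC}(z))$ in general. Take $(\s\l_3,\s\o_3)$, where $\a=\h$, $\mathfrak t=0$ and $\n=\u^+$. The symmetrized cubic Casimir $z_3$ satisfies $S(z_3)=-z_3$. Since $\widetilde{HC}(z_3)\neq 0$, this gives $HC(\operatorname{proj}(z_3))=-\widetilde{HC}(z_3)\neq\widetilde{HC}(z_3)$. So the square as literally stated does not commute, and no choice of sign conventions will close your step. The paper's one-line claim $\Theta f_\lambda=\widetilde{HC}(z)(-\lambda)f_\lambda$ makes the same slip. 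Passing from the right action to a left one negates the weight, but it also replaces $z$ by $S(z)$.

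\paragraph{What survives.} The correct statement is commutativity with $\widetilde{HC}$ replaced by $\widetilde{HC}\circ S$, and this is all that is used later. Since $S$ is a bijection of $Z(\g)$ and acts on the associated graded by $(-1)^{\deg}$, the images $HC(\operatorname{proj}(Z(\g)))$ and $\operatorname{proj}_{\a}(\widetilde{HC}(Z(\g)))$ coincide. Hence Corollary \ref{cor center surj} is unaffected, and so is your handling of the target $J_{\a}$.
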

\begin{proof}
    We prove that $HC(\operatorname{proj}(z))=\operatorname{proj}_{\a}(\widetilde{HC}(z))$ for $z\in Z(\g)$ by showing they take the same values on $\Lambda^+\sub\a^*$, which we noted in Section \ref{spherical weights} is dense in $\a^*$.  
    For $z\in Z(\g)$, let $\Theta$ denote the differential operator on $G/K$ induced by $z$.  
    On the one hand, we have by (\ref{eqn dist}):
    \[
    (\operatorname{ev}_{eK}\Theta)f_{\lambda}=HC(\operatorname{proj}z)(-\lambda).
    \]
    On the other hand, we have
    \[
(\operatorname{ev}_{eK}\Theta)f_{\lambda}=(\Theta f_{\lambda})(eK)=\widetilde{HC}(z)(-\lambda)f_{\lambda}(eK)=\widetilde{HC}(z)(-\lambda)
    \]
    Note that $\Theta f_{\lambda}=\widetilde{HC}(z)(-\lambda)f_{\lambda}$ because we are considering a right action.  The result is now clear.
\end{proof}
\begin{remark}
    One can prove the lemma by direct computation in $\UU\g$, as follows. Since the maps in the diagram are projection maps, it is enough to show that the kernels are equal when intersected with $Z(\g)$.

Let $f\in Z(\g)^\g\cap(\u^-\UU\g+(\UU\g)\u^+)$ and write $f$ as a linear combination of monomials with PBW basis in order that corresponds to the decomposition $$\g=(\u^-\cap\n^-)\oplus(\u^-\cap\k)\oplus\mathfrak t\oplus \a\oplus (\u^+\cap\k)\oplus (\u^+\cap\n)$$
where $\n^-=\bigoplus_{\alpha\in-\Delta^+}\g_\alpha$.
If the part from $\u^+ \cap \n$ is nonzero, the monomial is in $(\UU\g)\n$, and thus in the kernel of $HC$. If the part from $\u^+ \cap \n$ is zero, then the part from $\u^- \cap \n^-$ must be zero as well because the monomial has zero $\h$-weight and, specifically, zero $\a$-weight. Consequently, the monomial is in $\k \UU\g$, and therefore in the kernel of $HC$.

Suppose $f\in Z(\g)^\g\cap(\k\UU\g+(\UU\g)\n)$ and write $f$ as a linear combination of monomials with PBW basis in order that corresponds to the decomposition 
$$\g=(\m\cap\u^-)\oplus\mathfrak t\oplus\k'\oplus(\m\cap\u^+)\oplus\a\oplus\n$$
where $\k'=\operatorname{span}\{\theta e_\alpha+e_\alpha\mid\alpha\in\tilde\Delta^+\}$.
If the part in $\n$ or in $\m\cap\u^-$ is nonzero, the monomial is in $\ker\widetilde{HC}$. 
We note that $\m\cap \u^+$ commutes with $\a$ so if the part in $\m\cap \u^+$ is nonzero, the monomial is also in $\ker\widetilde{HC}$. 
We are left with monomials generated by elements in $\mathfrak t\oplus\k'\oplus\a$. 
Since the monomials of $f$ must be of $\h$-weight zero, the part in $\k'$ must be zero. Indeed, a product of the form $\prod_{\alpha>0}(e_\alpha+\theta e_\alpha)$ will contain $\prod_{\alpha>0}e_\alpha$, which has a positive weight. Hence the monomials of $f$ are generated by $\mathfrak t\oplus \a$ and $\mathfrak t$ is annihilated in $\operatorname{proj}_\a$, completing the proof.

\end{remark}

\begin{cor}\label{cor center surj}
        The associated graded of the map $\operatorname{proj}_{\a}\circ \widetilde{HC}:Z(\g)\to S(\a)$ is given by the natural composition of projections $R:S(\g)^{\g}\to S(\h)\to S(\a)$.  In particular, if $R$ restricts to a surjection $S(\g)^\g\to CR(S(\p)^{\k})$, then $Z(\g)\to HC(\ZZ_{(\g,\k)})$ is also surjective. The cases for which $R$ restricts a surjection are listed in \cite[Thm.~B]{RSS}.
\end{cor}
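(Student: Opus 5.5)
The plan has two parts, matching the structure of the statement. The first part identifies the associated graded map. The second proves the surjectivity criterion.

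For the first part, I filter $Z(\g)$ by the PBW filtration. I use that $\gr Z(\g)\cong S(\g)^{\g}$ via symmetrization, which holds for Kac--Moody superalgebras. For homogeneous $p\in S(\g)^{\g}$ of degree $d$, set $z=\lambda_{\g}(p)$ to be its symmetrization. I will show that the leading term of $\operatorname{proj}_{\a}\widetilde{HC}(z)$ equals $R(p)$, up to the antipode $\sigma$ if one keeps track of the sign convention. This is proved as in Lemma \ref{assoc_graded}. Expand $z$ in a PBW basis adapted to $\g=\u^-\oplus\h\oplus\u^+$. The top-degree part of $\widetilde{HC}(z)$ is obtained by discarding every monomial containing a factor from $\u^{\pm}$. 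Modulo lower filtration degree, this is exactly the image of $p$ under the projection $S(\g)\to S(\h)$ along $(\u^-\oplus\u^+)S(\g)$. The reason is that reordering inside $\UU\g$ only produces terms of lower degree. Composing with $\h\to\a$ then gives $R$, and by the previous lemma $\operatorname{proj}_{\a}\circ\widetilde{HC}=HC\circ\operatorname{proj}$.

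For the second part, suppose $R(S(\g)^{\g})=CR(S(\p)^{\k})$ under the identifications $\a\cong\a^*$ and $\p\cong\p^*$ given by the form. By Corollary \ref{cor assoc graded}(3), $\gr HC(\ZZ_{(\g,\k)})=CR^*(S(\p)^{\k})$; this is $CR(S(\p)^{\k})$ transported to $S(\a)$, possibly twisted by $\sigma$, which preserves it. The image $HC(\operatorname{proj}Z(\g))$ is a subspace of the filtered space $HC(\ZZ_{(\g,\k)})$. By the first part, its associated graded contains $R(S(\g)^{\g})$, which by assumption is all of $\gr HC(\ZZ_{(\g,\k)})$. I then conclude by the standard argument: if $V\subseteq U$ are filtered spaces with exhaustive filtrations bounded below and $\gr V=\gr U$, then $V=U$. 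To see this, take a minimal-degree element of $U\setminus V$, subtract an element of $V$ with the same leading term, and get a contradiction. This gives surjectivity of $Z(\g)\to HC(\ZZ_{(\g,\k)})$. The final assertion is just a citation of \cite[Thm.~B]{RSS}.

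The main obstacle is the first part. I need $\gr$ of the composite to be computed honestly on the leading term, which means checking that no cancellation occurs. The issue is that $R(p)$ might vanish, so the leading term could come from lower degree. This does not hurt the inclusion argument: I only need that whenever $R(p)\neq0$, it is the leading term of the image of $z$, and that holds because the degree-$d$ component is exactly $R(p)$. I would also check that the identification $\gr Z(\g)=S(\g)^{\g}$ is used only in the direction $\lambda_{\g}(S(\g)^{\g})\subseteq Z(\g)$, which is immediate from the equivariance of symmetrization.
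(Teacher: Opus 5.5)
Your proposal is correct and follows essentially the route the paper intends. The paper states this corollary without a written proof, as an immediate consequence of the commutative square $HC\circ\operatorname{proj}=\operatorname{proj}_{\a}\circ\widetilde{HC}$, Corollary \ref{cor assoc graded}, and the fact that passing to the associated graded detects surjectivity (Lemma \ref{lemma assoc gr reflects}); you have spelled out the leading-term computation it leaves implicit. One point is tacit in your argument: the filtration on $HC(\ZZ_{(\g,\k)})$ induced from distributions coincides with the degree filtration on $S(\a)$, which holds because $\operatorname{gr}HC=\sigma\circ CR^*$ is injective.
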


\subsubsection{Description of $\widetilde{HC}(Z(\g))$} By the previous section, we know that
\[
\operatorname{proj}_{\a}\widetilde{HC}(Z(\g))\sub HC(\ZZ_{(\g,\k)}).
\]
We recall, for the benefit of the reader, the Sergeev--Kac--Gorelik description which describes $\widetilde{HC}(Z(\g))$.  

Let $\tilde \Delta=\tilde \Delta_{\bar 0}\cup\tilde\Delta_{\bar 1}\subset \h^*$ denote the set of roots and let $\tilde\Delta^+$  be the set of positive roots corresponding to $\u^+$. 
Let $\tilde\rho=\frac{1}{2}\left(\sum_{\alpha\in\tilde\Delta_{\bar 0}}\alpha -\sum_{\alpha\in\tilde\Delta_{\bar 1}}\alpha\right)$.  
Let $\widetilde W$ be the Weyl group with respect to $\tilde{\Delta}_{\bar 0}$, and recall the $\tilde{\rho}$-shifted action on $\h^*$: $w_{\smallbullet}\lambda= w(\lambda+\tilde{\rho})-\tilde{\rho}$. 

The Harish-Chandra image of the center of the universal enveloping algebra can be described in the following theorem.
\begin{thm}[\cite{G2,K,Sergeev}]\label{thm center}
The image of $\widetilde{HC}$ applied to $Z(\g)$ is equal to 
$$\left\{
f\in S(\h)^{\widetilde W_{\smallbullet}} \mid f(\lambda+\alpha)=f(\lambda) \text{ for }\lambda\in H_\alpha, \alpha\in\tilde\Delta_{\bar 1}\text{ s.t. } (\alpha,\alpha)=0
\right\},$$
where $H_\alpha:=\{\lambda\in\h^*\mid (\lambda+\tilde\rho,\alpha)=0\}$. Here the action of $\widetilde W$ is the $\tilde\rho$-shifted action.
\end{thm}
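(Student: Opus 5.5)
The plan follows the same two-step pattern the paper uses for Theorem \ref{main thm}. First we show that $\widetilde{HC}(Z(\g))$ lies in the stated algebra; call that algebra $I(\h)$. Then we show that $\widetilde{HC}|_{Z(\g)}$ is injective and that its image exhausts $I(\h)$, by comparing associated graded objects via the Chevalley restriction theorem for $S(\g)^{\g}$. Throughout, we use that for $z\in Z(\g)$ and any highest weight module of highest weight $\lambda$ (with respect to $\u^+$), $z$ acts by the scalar $\widetilde{HC}(z)(\lambda)$, up to the same antipode convention as in the paper. Since $\widetilde{HC}$ is an algebra homomorphism on $Z(\g)$, it suffices to check every condition on Zariski-dense sets of $\lambda$.

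\textbf{Containment.} Both invariance conditions come from homomorphisms between Verma modules $M(\lambda)$, on which $z$ must act by equal scalars.

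For the $\widetilde W_{\smallbullet}$-invariance, take a simple even root $\beta$ of $\g_{\ol{0}}$. If $\beta$ is not simple for $\tilde\Delta^+$, we first pass to a Borel subalgebra where it is simple, using odd reflections; the odd invariance below shows that the resulting $\tilde\rho$-shift is compatible. For $\lambda$ with $(\lambda+\tilde\rho,\beta^\vee)=k\in\Z_{>0}$, the singular vector $f_\beta^{k}v_\lambda$ gives a nonzero map $M(r_\beta{}_{\smallbullet}\lambda)\to M(\lambda)$. Such $\lambda$ are Zariski dense in $\h^*$, so $\widetilde{HC}(z)$ is $(r_\beta)_{\smallbullet}$-invariant. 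These reflections generate $\widetilde W$.

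For the odd condition, first take an isotropic simple root $\alpha$ with $(\lambda+\tilde\rho,\alpha)=0$. Then $f_\alpha v_\lambda$ is singular, since $e_\alpha f_\alpha v_\lambda=h_\alpha v_\lambda=(\lambda,\alpha)v_\lambda=0$ because $(\tilde\rho,\alpha)=\tfrac12(\alpha,\alpha)=0$. This yields $\widetilde{HC}(z)(\lambda-\alpha)=\widetilde{HC}(z)(\lambda)$. Replacing $\lambda$ by $\lambda+\alpha$, which still lies in $H_\alpha$, gives the stated form. For a non-simple isotropic root $\alpha$, we move to a base where $\alpha$ is simple by a chain of odd reflections. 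Under an odd reflection $\tilde\rho$ changes by $\alpha$, and the Verma modules are related in a way that keeps $\widetilde{HC}$ unchanged on $Z(\g)$ after the corresponding shift; this is the analogue of Lemma \ref{lemma rho refl}. Combining this with $\widetilde W$-conjugacy of the isotropic roots, which is the analogue of Lemma \ref{lemma conj props}, gives the condition for all $\alpha$.

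\textbf{Injectivity and surjectivity.} The associated graded of $\widetilde{HC}|_{Z(\g)}$ is the restriction $S(\g)^{\g}\to S(\h)$, which is injective by Sergeev's Chevalley theorem. Its image is the algebra of $\widetilde W$-invariant polynomials $p$ satisfying the homogeneous odd condition
\[
\partial_\alpha p=0 \quad \text{on } \{(\lambda,\alpha)=0\} \text{ for isotropic } \alpha,
\]
that is, the supersymmetric polynomials. It remains to show that $\operatorname{gr} I(\h)$, taken with respect to the degree filtration, is contained in that image. For $f\in I(\h)$, the top-degree part of $f$ is $\widetilde W$-invariant without shift. Expanding $f(\lambda+t\alpha)-f(\lambda)=0$ on $H_\alpha$ in degrees shows that the top homogeneous part satisfies the derivative condition on $\{(\lambda,\alpha)=0\}$. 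Hence $\operatorname{gr} I(\h)\subseteq \operatorname{gr}\widetilde{HC}(Z(\g))$. Together with the containment $\widetilde{HC}(Z(\g))\subseteq I(\h)$, an induction on degree gives equality.

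\textbf{Main obstacle.} The delicate step is the explicit description of the image of $S(\g)^{\g}\to S(\h)$, which is Sergeev's theorem. For the exceptional and $\mathfrak{q}$-free Kac--Moody cases it needs either case-by-case generators (supersymmetric power sums) or Gorelik's argument via Shapovalov determinants. I would cite it rather than reprove it. A secondary subtlety is that isotropic roots need not be simple in a fixed base; there I would rely on the odd-reflection compatibility described above.
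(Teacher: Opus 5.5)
The paper gives no proof of this statement. It is quoted from Sergeev, Kac and Gorelik and used only as an input: for the diagonal rank-one pair in Proposition \ref{prop_rank_one}, and in Corollary \ref{cor center surj}. Your outline is essentially Sergeev's route, and it mirrors the paper's proof of Theorem \ref{main thm}. Invariance comes from highest weight vectors plus change of Borel, and bijectivity comes from comparing associated graded objects with a Chevalley restriction theorem, here Sergeev's description of the image of $S(\g)^{\g}\to S(\h)$. Granting that input, three parts are fine: your graded comparison, the isotropic simple-root argument, and the transport of the odd condition along $\widetilde W_{\smallbullet}$. Gorelik's proof goes the other way. It builds enough central elements directly (Kac's construction via Shapovalov determinants), which avoids the case-by-case description of $S(\g)^{\g}$ and recovers it as a corollary.

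One step fails as written: the even reflections. Suppose $\beta$ is a simple root of $\tilde\Delta_{\bar 0}^+$ with $\beta/2\in\tilde\Delta_{\bar 1}$, for example $\beta=2\delta_n$ for $\o\s\p(2m+1|2n)$, or $2\delta$ for $G(3)$.
\begin{itemize}
\item Such a $\beta$ is simple for no Borel subalgebra containing $\h$: whichever of $\pm\beta/2$ is positive makes $\beta$ either decomposable or negative.
\item The vector $f_\beta^k v_\lambda$ is never singular. Already in $\o\s\p(1|2)$, normalized by $[h,E]=E$ and $[E,F]=h$, one computes $EF^mv_\lambda=c_mF^{m-1}v_\lambda$ with $c_{2j}=-j$ and $c_{2j+1}=\lambda(h)-j$. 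Since $F^{2k}$ is proportional to $f_\beta^k$, even powers never give singular vectors.
\end{itemize}
The correct input (Serganova) is that for each simple root $\beta$ of $\tilde\Delta^+_{\bar 0}$, either $\beta$ or $\gamma=\beta/2$ becomes simple after a chain of odd reflections. In the second case, use $f_\gamma^{2j+1}v_\lambda$ instead. It is singular exactly when $(\lambda+\tilde\rho,\gamma^\vee)=2j+1$, its weight $\lambda-(2j+1)\gamma$ equals $(r_\beta)_{\smallbullet}\lambda$, and these $\lambda$ are still Zariski dense.

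You also attribute the compatibility of $\tilde\rho$-shifts under an odd reflection at a simple isotropic $\alpha$ to the odd invariance condition, which is not the source. It comes from the fact that $f_\alpha v_\lambda$ is primitive for the reflected Borel $\b'$ for every $\lambda$. This gives $\widetilde{HC}_{\b}(z)(\lambda)=\widetilde{HC}_{\b'}(z)(\lambda-\alpha)$, with $\tilde\rho_{\b'}=\tilde\rho_{\b}+\alpha$. You state this correctly in your odd-root paragraph, and it is exactly what the even step needs.
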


\begin{remark}\label{rem:diagonal case}
    Theorem \ref{thm center} is equivalent to the computation of $HC(Z(\g\times\g,\g))$, where we use the standard identification $\UU(\g\times\g)/\UU(\g\times\g)\Delta\g\cong \UU\g$, where $\Delta\g\sub\g\times\g$ is the diagonally embedded copy of $\g$.
\end{remark}


\subsection{A rank-one reduction} Let $\alpha\in\Sigma$.  Using the commutative square in Equation (\ref{eq: comm. square}) and the equivariance of the action of $\k(\alpha)$, we obtain the following commutative triangle:
	\begin{equation}\label{eq comm diag inv dist}
	\xymatrix{\ZZ_{(\g,\k)}\ar[rr] \ar[dr]_{HC} && \ZZ(\g(\alpha),\k(\alpha)) \ar[dl]^{HC_{\alpha}}\\ & S(\a) &}
	\end{equation}	
	where $HC_\alpha$ is the Harish-Chandra map for the rank-one symmetric pair $(\g(\alpha),\k(\alpha))$, defined in Section \ref{subsec:rank one prelim}. In particular, we obtain the following lemma.
    
    \begin{lemma}
        For a simple root $\alpha$, we have $HC(\ZZ_{(\g,\k)})\sub HC_{\alpha}
        (\ZZ(\g(\alpha),\k(\alpha)))$.
    \end{lemma}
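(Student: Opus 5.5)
The plan is to read the lemma off the commutative triangle (\ref{eq comm diag inv dist}). That triangle rests on two facts, and I would check both first.

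The first fact is that $\operatorname{res}_{\alpha}$ sends $\ZZ_{(\g,\k)}$ into $\ZZ_{(\g(\alpha),\k(\alpha))}$. By construction, $\operatorname{res}_{\alpha}:\UU\g/(\UU\g)\k\to\UU\g(\alpha)/(\UU\g(\alpha))\k(\alpha)$ is projection along $(\n^{\alpha}\UU\g+(\UU\g)\k)/(\UU\g)\k$, using the decomposition (\ref{eqn ug decomp at alpha}). That decomposition is $\k(\alpha)$-stable because $\alpha$ is simple: $\n^{\alpha}$ is $\g(\alpha)$-stable and $\r$ was chosen $\k(\alpha)$-stable. Hence $\operatorname{res}_{\alpha}$ is $\k(\alpha)$-equivariant for the left action, as already noted after (\ref{eq: comm. square}).

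Now take $D\in\ZZ_{(\g,\k)}$. It is annihilated by $\k$ under left multiplication, in the super sense that $xD=0$ for all $x\in\k$. In particular it is annihilated by $\k(\alpha)\sub\k$. Equivariance then gives $x\cdot\operatorname{res}_{\alpha}(D)=\operatorname{res}_{\alpha}(xD)=0$ for all $x\in\k(\alpha)$. So $\operatorname{res}_{\alpha}(D)$ lies in $\ZZ_{(\g(\alpha),\k(\alpha))}$.

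The second fact is Corollary \ref{cor:HC_alpha}, which gives $HC(D)=HC_{\alpha}(\operatorname{res}_{\alpha}(D))$. Combining the two facts,
\[
HC(D)=HC_{\alpha}(\operatorname{res}_{\alpha}(D))\in HC_{\alpha}(\ZZ_{(\g(\alpha),\k(\alpha))}),
\]
which is the claimed inclusion.

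The one point that needs care is the equivariance of $\operatorname{res}_{\alpha}$ under left multiplication by $\k(\alpha)$. Left multiplication by $x\in\k(\alpha)$ preserves $\UU\g(\alpha)$. It also preserves $\n^{\alpha}\UU\g+(\UU\g)\k$: for the first summand, write $x\n^{\alpha}\sub\n^{\alpha}x+[x,\n^{\alpha}]$ and use $[\k(\alpha),\n^{\alpha}]\sub\n^{\alpha}$; the second summand $(\UU\g)\k$ is a left ideal. So both summands of the decomposition are stable under left multiplication by $\k(\alpha)$, and the projection onto the first summand commutes with it. I expect no further obstacle. Note also that the Harish-Chandra projections on both sides use the same $\a$ and the same antipode, so they are directly compatible.
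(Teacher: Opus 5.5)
Your proof is correct and is the paper's argument: the paper also reads the lemma off the commutative triangle (\ref{eq comm diag inv dist}), which comes from the $\k(\alpha)$-equivariance of $\operatorname{res}_{\alpha}$ together with Corollary \ref{cor:HC_alpha}. Your explicit check that left multiplication by $\k(\alpha)$ preserves $\n^{\alpha}\UU\g+(\UU\g)\k$ supplies the equivariance that the paper states without verification.
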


 \begin{prop}\label{prop_rank_one}
 Suppose that $(\g,\k)$ is a rank-one symmetric pair with simple root $\alpha$. 
 Then $HC(\ZZ_{(\g,\k)})=J_{\a}$.
		\end{prop}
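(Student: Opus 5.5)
The plan is to prove the two inclusions $HC(\ZZ_{(\g,\k)})\sub J_{\a}$ and $J_{\a}\sub HC(\ZZ_{(\g,\k)})$ separately. The first inclusion comes from the center of $\UU\g$ via Theorem \ref{thm center}. The second comes from a graded dimension count using Corollary \ref{cor assoc graded}.

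\textbf{Step 1 (image lies in $J_{\a}$).} By Lemma \ref{lemma classification rank one}, up to split abelian factors the pair $(\g',\k)$ is one of the listed pairs. For each of them, I would check that the map $R:S(\g)^{\g}\to CR(S(\p)^{\k})$ is surjective, citing \cite[Thm.~B]{RSS}. In the diagonal cases one can instead argue directly via Remark \ref{rem:diagonal case}. Corollary \ref{cor center surj} then gives $HC(\ZZ_{(\g,\k)})=\operatorname{proj}_{\a}\widetilde{HC}(Z(\g))$.

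It therefore suffices to show that restrictions to $\a$ of polynomials in the Sergeev--Kac--Gorelik image (Theorem \ref{thm center}) lie in $J_{\a}$.

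\begin{itemize}
\item \emph{Regular $\alpha$.} The reflection $r_\alpha$ lifts to an element of $\widetilde W$ preserving $\a$. Using Lemma \ref{lemma local global rho}, $\tilde\rho$ restricts to $\rho$ along $\alpha$, so $\widetilde W_{\smallbullet}$-invariance restricts to $(r_\alpha)_{\smallbullet}$-invariance.
\item \emph{Singular $\alpha$, case $\o\s\p(2|2n)$.} Here $\h=\a\oplus\mathfrak t$, and the odd roots have the form $\pm\epsilon\pm\delta_i$, with $\epsilon$ restricting to $\alpha$ and the $\delta_i$ spanning $\mathfrak t^*$. For $\lambda\in\a^*$ with $(\lambda+\rho,\alpha)=0$, I would choose a lift $\tilde\lambda$ and chain the odd invariances $f(\tilde\lambda+\beta)=f(\tilde\lambda)$, combined with $\widetilde W$-reflections in the $\delta_i$. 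The goal is to move from $\lambda+r\alpha$ to $\lambda-r\alpha$ in $r$ steps, with the $\mathfrak t$-component returning to its original value.
\item \emph{Singular $\alpha$, case $\g\l(1|1)\times\g\l(1|1)$.} This is a direct computation, since $n_\alpha=1$.
\end{itemize}

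\textbf{Step 2 (surjectivity by graded dimensions).} By Corollary \ref{cor assoc graded}, $HC$ is injective and $\gr HC(\ZZ_{(\g,\k)})\cong CR^*(S(\p)^{\k})$. The rank-one description of $CR^*(S(\p)^{\k})$ in \cite{A2,RSS} gives its graded pieces explicitly.

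\begin{itemize}
\item \emph{Singular case $\o\s\p(2|2n)$.} The image is spanned by $t^2$ and $t^{2n+1}$, i.e.\ by $\C[t^2]+t^{2n+1}\C[t^2]$. This matches the leading terms of $J_{\a}$, which I would show is generated by $t(t-2n)$ and $\prod_{s=0}^{2n}(t-s)$ (Example \ref{example J osp(2|2)}).
\item \emph{Leading terms of $J_{\a}$.} A polynomial $f$ in $J_{\a}$ is symmetric under $t\mapsto 2n-t$ modulo the ideal of polynomials vanishing at $n\pm r$. Subtracting a polynomial in $t(t-2n)$ that agrees at the $2n+1$ symmetric points leaves a multiple of $\prod_{s=0}^{2n}(t-s)$ times a symmetric polynomial. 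Hence $\gr J_{\a}\sub CR^*(S(\p)^{\k})$.
\item \emph{Regular case.} This is the classical statement: $\gr S(\a)^{W_{\smallbullet}}=S(\a)^W$, together with Chevalley's theorem for $\g_{\ol 0}$ in rank one.
\end{itemize}

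Combining $HC(\ZZ_{(\g,\k)})\sub J_{\a}$ with $\gr J_{\a}\sub\gr HC(\ZZ_{(\g,\k)})$ forces equality.

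\textbf{Main obstacle.} The hardest step is the singular $\o\s\p(2|2n)$ case of Step 1. One must produce the chain of odd-root moves and $\widetilde W$-moves explicitly and verify that the $\rho$-shifts match, via Lemma \ref{lemma local global rho}. Alternatively, one can avoid the center entirely by computing $J_{\a}$ and $CR^*(S(\p)^{\k})$ and exhibiting explicit preimages, for example the images of the quadratic Casimir and of a suitable higher central element. I would try the explicit generators first, since Example \ref{example J osp(2|2)} already identifies them.
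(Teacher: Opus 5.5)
Your strategy matches the paper's. Corollary~\ref{cor center surj} together with \cite[Thm.~B]{RSS} identifies $HC(\ZZ_{(\g,\k)})$ with the restriction of $\widetilde{HC}(Z(\g))$ to $\a^*$. One then checks the invariance conditions and closes with a graded comparison against $CR^*(S(\p)^{\k})$.

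In the $\o\s\p(2|2n)$ case the paper simply compares codimensions ($n$ on both sides). Your leading-term argument also works. The cleanest form: an $f\in J_{\a}$ of degree at most $2n$ satisfies $f(t)=f(2n-t)$ identically, since the difference has degree $\le 2n$ and $2n+1$ zeros. Hence such $f$ has even degree, while every monomial of degree $\ge 2n$ already lies in $CR^*(S(\p)^{\k})$. Your phrase ``a multiple of $\prod_s(t-s)$ times a symmetric polynomial'' is not literally right, since the cofactor is arbitrary, but the conclusion survives.

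Your singular-case invariance argument is exactly the second remark after the paper's proof. The main proof instead uses the fact $f_{n-r}\in\UU\g\cdot f_{n+r}$ from \cite[Sec.~9.1]{Sh2}. No $\widetilde W$-moves are needed. With $\tilde\rho=-n\varepsilon+\sum_i(n-i+1)\delta_i$ one goes $(n-r)\varepsilon\to n\varepsilon-r\delta_{n-r+1}\to(n+r)\varepsilon$ along the isotropic roots $\varepsilon\mp\delta_{n-r+1}$.

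The step that does not go through as written is the regular case of Step~1. Lifting $r_\alpha$ to some $\tilde w\in\widetilde W$ normalizing $\a$ does not by itself give $(r_\alpha)_{\smallbullet}$-invariance on $\a^*$. Write $\tilde\rho=\rho+\rho_{\mathfrak t}$ with $\rho_{\mathfrak t}\in\mathfrak t^*$. For $\lambda\in\a^*$ one gets $\tilde w(\lambda+\tilde\rho)-\tilde\rho=r_\alpha(\lambda+\rho)-\rho+(\tilde w\rho_{\mathfrak t}-\rho_{\mathfrak t})$, which leaves $\a^*$ unless $\tilde w$ fixes $\rho_{\mathfrak t}$.

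This can fail for every lift. Take $(\o\s\p(2k|2n),\o\s\p(2k-1|2n))$ with $2\le k\le n+1$ and $\u^+$ given by $\varepsilon_1>\dots>\varepsilon_k>\delta_1>\dots>\delta_n$. Any lift must change the sign of some $\varepsilon_j$ with $j\ge 2$, because sign changes in $W(D_k)$ come in pairs. But the $\varepsilon_j$-coefficients $k-j-n$ of $\rho_{\mathfrak t}$ are nonzero and pairwise distinct, so no lift fixes $\rho_{\mathfrak t}$.

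Repairing this means moving $\tilde w\rho_{\mathfrak t}$ back by a change of Borel of $\c(\a)$. When $\c(\a)_{\ol 1}\neq 0$ this needs odd reflections, and hence the odd invariance in Theorem~\ref{thm center}. Lemma~\ref{lemma local global rho} is vacuous in rank one and does not help.

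The paper avoids the issue entirely. Since $\gr HC(\ZZ_{(\g,\k)})\sub S(\a)^{r_\alpha}=\C[h_\alpha^2]$ is generated in degree $2$ and $HC$ is multiplicative, $HC(\ZZ_{(\g,\k)})=\C[HC(\Omega)]$. Because $\a^*\perp\mathfrak t^*$, $HC(\Omega)(\lambda)$ equals $(\lambda+\rho,\lambda+\rho)$ up to an additive constant, so it is visibly $(r_\alpha)_{\smallbullet}$-invariant. This is the Casimir alternative you mention at the end, and it is the right tool here.

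Relatedly, $CR^*(S(\p)^{\k})=\C[h_\alpha^2]$ in the regular case does not follow from Chevalley's theorem for $\g_{\ol 0}$, because the restriction $S(\p)^{\k}\to S(\p_{\ol 0})^{\k_{\ol 0}}$ need not be surjective. It follows from the $\k$-invariant quadratic form on $\p$, whose restriction to $\a$ is nondegenerate.
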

		\begin{proof}
        We use the classification given in Lemma \ref{lemma classification rank one}.  Without loss of generality, we may assume that $\a'=0$.
        
    		For Case (1), we have $\C[HC(\Omega)]\sub J_{\a}$, where $\Omega$ is the quadratic Casimir element, and $HC(\Omega)$ is a nonzero element of degree 2.  
            On the other hand, it is clear that $CR^*(S(\p)^\k)\sub S(\a)^{r_{\alpha}}$.  Since $\dim\a=1$, $S(\a)^{r_\alpha}$, is the ring of even degree polynomials in one variable and is equal to $\operatorname{gr}\C[HC(\Omega)]$. We can thus conclude by Corollary \ref{cor assoc graded}.
   			
			For Case (2), we use our classification of such pairs given in Lemma \ref{lemma classification rank one}. 
            The case of the diagonal pair $(\g\l(1|1)\times\g\l(1|1),\g\l(1|1))$ follows from Theorem \ref{thm center}, see \cref{rem:diagonal case}.  
            It remains to consider the pair $(\g,\k)=(\o\s\p(2|2n),\o\s\p(1|2n))$.  
            By \cite[Thm.~B]{RSS}, for $(\g,\k)$ of rank one, the map $S(\g)^{\g}\to S(\p)^\k$ is surjective, so we know from Corollary \ref{cor center surj} that $J_{\a}$ is the image of the composition $Z(\g)\xto{\widetilde{HC}} S(\h)\to S(\a)$. 
            On the other hand, \cite[Thm.~A]{RSS} or \cite[Prop. 4.5]{A2} imply that in this case $CR^*(S(\p)^\k)$ is of codimension $n$ in $S(\a)$.  
            Since our proposed description of $J_{\a}$ also has codimension $n$ in $S(\a)$, it only remains to show that the described invariance conditions hold.

            Let $\epsilon\in\a^*$ be the basis element such that $\epsilon(t)=1$, where $t\in\a$ and $\ad(t)$ has eigenvalues $0,\pm1$ on $\g$.  This gives us a $\Z$-grading $\g=\g_{-1}\oplus\g_0\oplus\g_1$, and we choose our positive system so that $\n^+=\g_1$.  Set $K(s)=\Ind_{\g_0\oplus\g_1}^{\g}\C_{s\epsilon}$ to be the Kac-module of highest weight $s\epsilon$. Notice that since $\k+(\g_0+\g_1)=\g$, we have
            \[
            \operatorname{Res}_{\k}K(s)\cong \Ind_{\k_{\ol{0}}}^{\k}\C.
            \]
            This $\k$-module is semisimple and admits a one-dimensional $\k$-fixed subspace, giving us a unique map to our global supersymmetric space:
            \[
            K(s)\to\C[G/K].
            \]
            In particular we have highest weight functions $f_{s}\in\C[G/K]^{\n^+}$ for every $s\in\Z$.  

            In Sec.~9.1 of \cite{Sh2}, it was shown that $f_{n-r}\in\UU\g\cdot f_{n+r}\sub \C[G/K]$ whenever $1\leq r\leq n$.  It follows that any element of $\ZZ_{(\g,\k)}$ must act on $f_{n-r}$ and $f_{n+r}$ by the same scalar.  This implies the invariance condition, using that $\rho=-n\epsilon$.
		\end{proof}


  \begin{remark}
      We note that if $\alpha$ is isotropic and $(\lambda+\rho,\alpha)=0$, then $(\lambda+\alpha+\rho,\alpha)=0$ which implies  that $f(\lambda)=f(\lambda+r\alpha)$ for every $r\in \mathbb{Z}$.  Since $f$ is a polynomial, this forces $f(\lambda+r\alpha)=f(\lambda)$ for all $r\in\C$.
\end{remark}

  \begin{remark}
      
     The invariance for the non-isotropic singular case can be also be deduced explicitly by restricting an element in $HC(Z(\o\s\p(2|2n)))$ to $\a$. Recall that in this case, $\h^*=\operatorname{span}\{\varepsilon,\delta_1,\ldots,\delta_n\}$ and $\a^*=\operatorname{span}\{\varepsilon\}$. Consider the standard choice of simple roots for $\g$. Then 
     $$\tilde \rho=-n\varepsilon+n\delta_1+(n-1)\delta_2+\cdots+\delta_n \text{ and } \rho=-n\varepsilon.$$
     The isotropic roots of $\o\s\p(2|2n)$ are $\{\pm(\varepsilon\pm\delta_i)\mid  i=1,\ldots,n\}$ and so the restricted roots are $\{\pm\varepsilon\}$ with multiplicity $(0|2n)$.
     
     A function $f\in HC(Z(\o\s\p(2|2n)))$ satisfies 
     $$f(\lambda)=f(\lambda+r\alpha)\text{ for any }r\in\mathbb Z ,\lambda\in \tilde H_{\alpha}$$
     where $\tilde H_{\alpha}=\{\lambda\in\h^* \mid(\lambda+\tilde\rho,\alpha)=0 \}$. For $r=1,\ldots,n$, $(n-r)\varepsilon\in \tilde H_{\varepsilon-\delta_{n-r+1}}$ so
     $$f((n-r)\varepsilon)=f(n\varepsilon-r\delta_{n-r+1}).$$
     However $n\varepsilon-r\delta_{n-r+1}\in \tilde H_{\varepsilon+\delta_{n-r+1}}$ so
     $$f(n\varepsilon-r\delta_{n-r+1})=f((n+r)\varepsilon).$$
     We obtain that $f((n-r)\varepsilon)=f((n+r)\varepsilon)$. Since $\lambda\in H_\varepsilon$ if and only if $\lambda=n\varepsilon$, we are done. 
      \end{remark}

      
      \begin{cor}\label{cor invt rank one simple inclusion}
          For any simple root $\alpha\in\Sigma$, we have $HC(\ZZ_{(\g,\k)})\sub J_{\a,\Sigma,\alpha}$.
      \end{cor}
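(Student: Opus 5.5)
The plan is to combine the rank-one reduction triangle (\ref{eq comm diag inv dist}) with Proposition \ref{prop_rank_one} applied to the pair $(\g(\alpha),\k(\alpha))$. The lemma preceding Proposition \ref{prop_rank_one} already gives
\[
HC(\ZZ_{(\g,\k)})\sub HC_{\alpha}(\ZZ(\g(\alpha),\k(\alpha))).
\]
The pair $(\g(\alpha),\k(\alpha))$ is a rank-one supersymmetric pair with Kac--Moody $\g(\alpha)$, it is $\theta$-stable, and its Iwasawa decomposition $\g(\alpha)=\k(\alpha)\oplus\a\oplus\n(\alpha)$ persists. Its Cartan subspace is still $\a$, possibly with a split abelian factor $\a'$, i.e.~the orthogonal complement of $h_\alpha$ on which $\alpha$ vanishes. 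Proposition \ref{prop_rank_one} therefore identifies $HC_\alpha(\ZZ(\g(\alpha),\k(\alpha)))$ with the algebra $J_{\a}$ attached to this rank-one pair, computed with its own Weyl vector $\rho_\alpha$ and positive system $\{\alpha\}$ (together with $2\alpha$ when it is a root).

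The next step is to unwind this rank-one $J_{\a}$ and compare it with $J_{\a,\Sigma,\alpha}$. The only potential discrepancy is the shift: the rank-one conditions are phrased with $\rho_\alpha$, while $J_{\a,\Sigma,\alpha}$ uses $\rho_\Sigma$. In both cases the relevant condition depends on $\lambda$ only through $(\lambda+\rho,\alpha)$:
\begin{enumerate}
\item for regular $\alpha$, the reflection $r_\alpha$ and the $\rho$-shifted action of $r_\alpha$ involve only $(\lambda+\rho,\alpha)$ along the $\alpha$-direction;
\item for singular $\alpha$, the hyperplane condition is $(\lambda+\rho,\alpha)=0$.
\end{enumerate}
Lemma \ref{lemma local global rho} gives $(\rho_\Sigma,\alpha)=(\rho_\alpha,\alpha)$, so $\rho_\Sigma-\rho_\alpha$ is orthogonal to $\alpha$.

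For regular $\alpha$ this means $r_\alpha(\lambda+\rho_\Sigma)-\rho_\Sigma=r_\alpha(\lambda+\rho_\alpha)-\rho_\alpha$, because $r_\alpha$ fixes $\rho_\Sigma-\rho_\alpha$. Hence the two dot actions of $r_\alpha$ coincide. For singular $\alpha$ the hyperplanes $(\lambda+\rho_\Sigma,\alpha)=0$ and $(\lambda+\rho_\alpha,\alpha)=0$ coincide, and $n_\alpha$ is the same in $\g$ and in $\g(\alpha)$. The rank-one Weyl group is generated by $r_\alpha$ when $\alpha$ is regular, and is trivial when $\alpha$ is singular. So the rank-one $J_{\a}$ is exactly $J_{\a,\Sigma,\alpha}$, provided $\alpha$ is admissible; it is, since simple roots are trivially admissible.

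The main point requiring care is the application of Proposition \ref{prop_rank_one} when $\a$ is larger than the rank-one Cartan direction, i.e.~when $\a'\neq0$. That proof assumed $\a'=0$ "without loss of generality". Here the justification is that $\a'$ is central in $\g(\alpha)$ and lies in $\p$, so $\ZZ(\g(\alpha),\k(\alpha))\cong\ZZ(\g',\k)\otimes S(\a')$ and the Harish-Chandra map is the identity on the $S(\a')$ factor. The conditions defining $J_{\a,\Sigma,\alpha}$ then only constrain the variable along $h_\alpha$, with coefficients in $S(\a')$ being arbitrary, which matches. I would state this reduction explicitly and then conclude.
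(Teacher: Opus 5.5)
Your proposal is correct and follows the paper's proof. The paper applies Proposition \ref{prop_rank_one} to $(\g(\alpha),\k(\alpha))$ and uses Lemma \ref{lemma local global rho} to identify $J_{\a,\{\alpha\}}$ with $J_{\a,\Sigma,\alpha}$; your treatment of the split factor $\a'$ just makes explicit the paper's ``without loss of generality''.

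Two harmless slips in that reduction:
\begin{enumerate}
\item $\a'$ should be the central split factor from Lemma \ref{lemma classification rank one}, not the orthogonal complement of $h_\alpha$. For isotropic $\alpha$, as in $(\g\l(1|1)\times\g\l(1|1),\g\l(1|1))$, one has $h_\alpha\in\ker\alpha$, and the Cartan subspace of $\g'$ is already two-dimensional.
\item $HC$ acts on $S(\a')$ by the antipode, not the identity.
\end{enumerate}
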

      \begin{proof}
          By Proposition \ref{prop_rank_one}, we have that $HC(\ZZ_{(\g,\k)})\sub J_{\a,\{\alpha\}}$.  However, because $\alpha$ is simple we may use Lemma \ref{lemma local global rho} to obtain that $J_{\a,\{\alpha\}}=J_{\a,\Sigma,\alpha}$.
      \end{proof}
      
	\subsection{Changing positive systems}  Let $\alpha\in\Sigma$, and consider the reflected simple root system $r_{\alpha}\Sigma$ (see Section \ref{sec reflections}).  Recall that $t_{m_{\alpha,1}\alpha}^*$ denotes the translation automorphism of $S(\a)$ in $m_{\alpha,1}\alpha$ (see Section \ref{section actions}). 
	
	\begin{prop}\label{prop: reflections on HC}
    Let $D\in \ZZ_{(\g,\k)}$. One has:
		\begin{enumerate}
			\item if $\alpha$ is a regular root then $HC_{\Sigma}(D)=HC_{r_{\alpha}\Sigma}(D)\circ r_{\alpha}$;
			\item if $\alpha$ is a singular root then we have that $HC_{\Sigma}(D)=t_{m_{\alpha,1}\alpha}^*(HC_{r_{\alpha}\Sigma}(D))$. In particular,
            \[
            HC_{\Sigma}(\ZZ_{(\g,\k)})=t_{m_{\alpha,1}\alpha}^*(HC_{r_{\alpha}\Sigma}(\ZZ_{(\g,\k)}))
            \]
		\end{enumerate}
	\end{prop}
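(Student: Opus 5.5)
The plan is to reduce to rank one via the commutative square (\ref{eq: comm. square}). The key observation is that, for $\alpha\in\Sigma$, the positive systems $\Delta^+_{\Sigma}$ and $\Delta^+_{r_\alpha\Sigma}$ differ only in the roots $\pm\alpha,\pm2\alpha$. In the singular case this is the definition of $r_\alpha\Sigma$. In the regular case, $r_\alpha$ permutes $\Delta^+_\Sigma\setminus\{\alpha,2\alpha\}$ and sends $\alpha,2\alpha$ to $-\alpha,-2\alpha$.

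Hence $\n^{\alpha}$ is the same subalgebra for both bases, and the $\k(\alpha)$-stable decomposition (\ref{eqn ug decomp at alpha}) is common to both. The map $\operatorname{res}_\alpha$ is therefore the same. Corollary \ref{cor:HC_alpha} then gives, for $D\in\ZZ_{(\g,\k)}$,
\[
HC_{\Sigma}(D)=HC_{\alpha}(\operatorname{res}_\alpha D),\qquad HC_{r_\alpha\Sigma}(D)=HC_{-\alpha}(\operatorname{res}_\alpha D).
\]
Here $HC_{\pm\alpha}$ are the Harish-Chandra projections of $(\g(\alpha),\k(\alpha))$ for the positive systems $\{\pm\alpha\}$, and $\operatorname{res}_\alpha D\in\ZZ_{(\g(\alpha),\k(\alpha))}$ by (\ref{eq comm diag inv dist}). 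So it suffices to compare $HC_\alpha$ and $HC_{-\alpha}$ on invariant distributions of the rank-one pair. The split factor $\a'$ plays no role: it is central and lies in $\a$.

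\emph{Regular case.} I would choose $w\in N_{K(\alpha)_{\ol 0}^\circ}(\a)$ representing $r_\alpha$; this exists because the reflection comes from an $\s\l(2)$ inside $\g(\alpha)_{\ol0}$. Then $\Ad(w)$ preserves $\k(\alpha)$, acts on $\a$ by $r_\alpha$, and maps $\n(\alpha)$ onto $\n(-\alpha)=\g_{-\alpha}\oplus\g_{-2\alpha}$. Invariant distributions are fixed by the connected group $K_{\ol0}^\circ$, since the $\k_{\ol0}$-action is locally finite and annihilates them. Applying $\Ad(w)$ to the decomposition $\UU\g(\alpha)=S(\a)\oplus(\n(\alpha)\UU\g(\alpha)+\UU\g(\alpha)\k(\alpha))$ then gives $HC_{-\alpha}(z)=r_\alpha\cdot HC_\alpha(z)$. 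The antipode commutes with $r_\alpha$, and this is exactly (1).

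\emph{Singular case.} Up to $\a'$, the pair is $(\o\s\p(2|2n),\o\s\p(1|2n))$ or $(\g\l(1|1)^{2},\g\l(1|1))$, with $m_{\alpha,1}=2n$ (resp.\ $2$). I would use highest weight functions and (\ref{eqn dist}). As in the proof of Proposition \ref{prop_rank_one}, for each weight $\lambda$ the Kac module $K$ with $\n(\alpha)$-highest weight $\lambda$ maps to $\C[G/K]$. Its $\n(-\alpha)$-highest (i.e.\ $\n(\alpha)$-lowest) vector has weight $\lambda-m_{\alpha,1}\alpha$, since $\g_{\alpha}$ is purely odd of dimension $m_{\alpha,1}$ and $\Lambda^{top}\g_{-\alpha}$ has weight $-m_{\alpha,1}\alpha$.

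For generic $\lambda$ (a Zariski dense set) the Kac module is typical, hence irreducible. The invariant differential operator attached to $D$ commutes with $G$, so it acts on its image by a single scalar. Computing this scalar with the $\n(\alpha)$-highest function gives $HC_\alpha(D)(-\lambda)$. Computing it with the $\n(-\alpha)$-highest function gives $HC_{-\alpha}(D)(-\lambda+m_{\alpha,1}\alpha)$, provided both functions are nonzero at $eK$ so that they can be normalized. Equating the two and using density yields $HC_\alpha(D)=t^*_{m_{\alpha,1}\alpha}HC_{-\alpha}(D)$, up to the sign conventions built into (\ref{eqn dist}). Applying this to $D$ ranging over $\ZZ_{(\g,\k)}$ gives the "in particular".

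The main obstacle is the nonvanishing at $eK$ of the extreme vectors in the singular case. For the highest one it follows from $\operatorname{Res}_\k K\cong\Ind_{\k_{\ol0}}^{\k}\C$ with its one-dimensional $\k$-invariants; for the lowest one I would use the same induced-module description with respect to the opposite parabolic $\g_0\oplus\g_{-1}$. Failing that, I would fall back on the explicit check suggested by the second Remark after Proposition \ref{prop_rank_one}: since $\ZZ$ is the image of the center in rank one, verify the translation identity on $\operatorname{proj}_\a\widetilde{HC}(Z(\g))$ using Theorem \ref{thm center} for the two Borels of $\g$ related by the odd reflections in $\varepsilon\pm\delta_i$.
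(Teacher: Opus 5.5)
Your strategy is the paper's. For regular $\alpha$ you use a representative of $r_\alpha$ in $K_{\ol 0}^\circ$ together with $K_{\ol 0}^\circ$-invariance of $D$. For singular $\alpha$ you compare the two extreme vectors of a typical Kac module for $\g(\alpha)$ inside $\C[G/K]$ and then use density. Two of your steps differ from the paper, and both are correct. The explicit reduction through $\operatorname{res}_\alpha$ is valid because $\n^{\alpha}$ and $\r$ are the same for $\Sigma$ and $r_\alpha\Sigma$. The regular case is handled purely algebraically, by conjugating $\UU\g(\alpha)=S(\a)\oplus(\n(\alpha)\UU\g(\alpha)+\UU\g(\alpha)\k(\alpha))$ by $\Ad(w)$ instead of moving eigenfunctions and invoking density.

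The one real problem is in the singular case, and it concerns the actual content of (2): the direction of the shift. Equating $HC_\alpha(D)(-\lambda)$ with $HC_{-\alpha}(D)(-\lambda+m_{\alpha,1}\alpha)$, as you write, gives $HC_\alpha(D)=t^*_{-m_{\alpha,1}\alpha}(HC_{-\alpha}(D))$, the opposite of the claim. This is not a harmless convention. For $(\o\s\p(2|2n),\o\s\p(1|2n))$ with $t=h_\alpha$, it would send $t(t+2n)\in J_{\a,\{-\alpha\}}=HC_{-\alpha}(\ZZ_{(\g,\k)})$ to $(t+2n)(t+4n)\notin J_{\a,\{\alpha\}}$, contradicting Proposition \ref{prop_rank_one}.

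The error comes from mixing two conventions. In (\ref{eqn dist}), $\lambda$ is the weight of $f_\lambda$ for the right action $f\mapsto f\cdot u$ of $\UU\g$ on $\C[G/K]$; this is why the antipode appears. For that action $\g_{-\alpha}$ \emph{raises} weights: if $f$ has weight $\mu$ and $x\in\g_{-\alpha}$, then $f\cdot x$ has weight $\mu+\alpha$. So the extreme vector $g=f_\lambda\cdot\bigwedge^{top}\g_{-\alpha}$ has weight $\lambda+m_{\alpha,1}\alpha$. Then (\ref{eqn dist}) for the base $\{-\alpha\}$ gives $D(g)=HC_{-\alpha}(D)(-\lambda-m_{\alpha,1}\alpha)\,g(eK)$, and you obtain exactly $t^*_{m_{\alpha,1}\alpha}$.

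The nonvanishing at $eK$ that you flag needs no fallback. Applying $\theta$ gives $\g(\alpha)=\k(\alpha)\oplus\a\oplus\n(-\alpha)$, so the irreducible module equals $g\cdot\UU\k(\alpha)$. For $k\in\UU\k$, with $\epsilon$ the augmentation, one has $(v\cdot k)(eK)=\epsilon(k)v(eK)$, and $D(v\cdot k)=\epsilon(k)D(v)$ by $\k$-invariance of $D$. Writing $f_\lambda=g\cdot k$, you see that $g(eK)=0$ would force $f_\lambda(eK)=0$, and otherwise $D(f_\lambda)=D(g)/g(eK)$. This is what the paper's step $\UU\g(\alpha)f_\lambda=\UU\k(\alpha)f_\lambda$ accomplishes. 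It also spares you checking that the invariant operator preserves the image of the Kac module. Your opposite-parabolic induced-module argument is the same fact.
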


\begin{proof} 
It suffices to prove our formulae for $\lambda\in\Lambda^+$, since the set $\Lambda^+$ is dense in $\a^*$ by \cref{spherical weights}.  For $\lambda\in\Lambda^+$, let $f_{\lambda}$ denote the corresponding eigenfunction on $G/K$ (see (\ref{eqn f lambda})).  Then consider $V=(\UU\g) f_{\lambda}$.  We may ask what is the highest weight of $V$ with respect to $\n_{r_{\alpha}\Sigma}$.  

First suppose that $\alpha$ is regular.  Then if we choose a lift $\widetilde{r_{\alpha}}$ of $r_{\alpha}$ to $K_0^\circ$, then $\widetilde{r_{\alpha}}^*(f_{\lambda})=f_{r_{\alpha}(\lambda)}$ will be a highest weight vector with respect to $r_{\alpha}(\Sigma)$.  Since $D$ is $K_0^{\circ}$ invariant, we have 
    \[
    HC_{\Sigma}(D)(r_{\alpha}(\lambda))=D(f_{r_\alpha(\lambda)})=D(\widetilde{r_{\alpha}}^*(f_{\lambda}))=D(f_{\lambda})=HC_{r_{\alpha}\Sigma}(D)(\lambda).
    \]
This proves (1).  Now we assume that $\alpha$ is singular.  Then by Lemma \ref{lemma classification rank one}, up to split factors fixed by $\theta$, $(\g(\alpha),\k(\alpha))$ is either $(\o\s\p(2|2n),\o\s\p(1|2n))$ or $(\g\l(1|1)\times\g\l(1|1),\g\l(1|1))$, and so we may assume $\g(\alpha)$ is either $\o\s\p(2|2n)$ or $\g\l(1|1)\times\g\l(1|1)$ .  In both cases we obtain a type one Lie superalgebra $\g(\alpha)=\g(\alpha)_{-1}\oplus\g(\alpha)_0\oplus\g(\alpha)_1$, and the distinguished Borel subalgebra $\b_0\oplus\g(\alpha)_1$ will be an Iwasawa Borel.  Thus for $\lambda\in\Lambda^+$ typical, $\UU\g(\alpha)f_{\lambda}$ will be isomorphic to an irreducible Kac-module for $\g(\alpha)$.  In particular, ${\bigwedge}^{top}\g(\alpha)_{-1}f_{\lambda}$ contains the lowest weight vector $g$, and hence is of weight $t_{-m_{\alpha,1}\alpha}(\lambda)$.  Since $\UU\g(\alpha)f_{\lambda}=\UU\k(\alpha)f_{\lambda}$ (by the Iwasawa decomposition), we obtain that 
\[
HC_{\Sigma}(D)(\lambda)=D(f_{\lambda})=D(g)=HC_{r_{\alpha}\Sigma}(D)(t_{-m_{\alpha,1}\alpha}(\lambda)).
\]
This formula now implies (2), and we are done.

\end{proof}

\begin{cor}\label{cor invce for HC}
    We have $HC_{\Sigma}(\ZZ_{(\g,\k)})\sub J_{\Sigma,\a}$. 
\end{cor}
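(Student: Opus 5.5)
By Lemma \ref{lemma local desc J_a}, $J_{\a,\Sigma}=\bigcap_{\beta\in\Delta_{\Sigma,ad}}J_{\a,\Sigma,\beta}$. It therefore suffices to show $HC_{\Sigma}(\ZZ_{(\g,\k)})\sub J_{\a,\Sigma,\beta}$ for every admissible root $\beta$. Corollary \ref{cor invt rank one simple inclusion} already gives this when $\beta$ is simple in $\Sigma$. The plan is to transport it along chains of singular reflections.

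Let $\beta\in\Delta_{\Sigma,ad}$. By definition there are singular roots $\alpha_1,\dots,\alpha_k$ and bases $\Sigma_0=\Sigma$, $\Sigma_i=r_{\alpha_i}\Sigma_{i-1}$, with $\alpha_i\in\Sigma_{i-1}$ and $\beta\in\Sigma_k$. We argue by induction on $k$, proving the stronger statement that $HC_{\Sigma'}(\ZZ_{(\g,\k)})\sub J_{\a,\Sigma',\beta}$ for any base $\Sigma'$ from which $\beta$ is reachable by $k$ singular reflections.

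The case $k=0$ is Corollary \ref{cor invt rank one simple inclusion} applied to $\Sigma_k$. For the inductive step, set $\alpha=\alpha_1\in\Sigma$ and $\Sigma'=r_{\alpha}\Sigma$. By induction, $HC_{\Sigma'}(\ZZ_{(\g,\k)})\sub J_{\a,\Sigma',\beta}$. Proposition \ref{prop: reflections on HC}(2) gives $HC_{\Sigma}(\ZZ_{(\g,\k)})=t^*_{m_{\alpha,1}\alpha}(HC_{\Sigma'}(\ZZ_{(\g,\k)}))$. Lemma \ref{lemma trans local invce condition} gives $J_{\a,\Sigma,\beta}=t^*_{m_{\alpha,1}\alpha}(J_{\a,\Sigma',\beta})$. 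Applying $t^*_{m_{\alpha,1}\alpha}$ to the inclusion yields $HC_\Sigma(\ZZ_{(\g,\k)})\sub J_{\a,\Sigma,\beta}$.

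Some bookkeeping is needed because the notation $J_{\a,\Sigma',\beta}$ was defined for $\beta$ admissible with respect to $\Sigma'$. This is fine here: $\beta$ is admissible for each $\Sigma_i$ along the chain, since the later reflections still lead to $\beta$. Positivity may change, since $\beta$ could be replaced by $-\beta$ when $\beta=\alpha_i$. However, the defining condition of $J_{\a,\Sigma,\beta}$ is symmetric under $\beta\mapsto-\beta$: both the singular condition $f(\lambda+r\beta)=f(\lambda-r\beta)$ on $(\lambda+\rho,\beta)=0$ and $r_\beta$-invariance are unchanged. So we may regard $J_{\a,\Sigma,\beta}$ as defined for any root $\beta$, which is how Lemma \ref{lemma trans local invce condition} is stated.

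I expect no serious obstacle, since the heavy lifting is in Proposition \ref{prop: reflections on HC} and Lemma \ref{lemma trans local invce condition}. The only subtle point is confirming that the translation in Proposition \ref{prop: reflections on HC} and the translation in Lemma \ref{lemma trans local invce condition} go in the same direction, namely $\Sigma'\to\Sigma$ via $t^*_{m_{\alpha,1}\alpha}$ in both. They do.
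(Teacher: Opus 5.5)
Your proposal is correct and is essentially the paper's own proof. You reduce to admissible roots via Lemma \ref{lemma local desc J_a}, obtain the condition for $\beta$ simple in the reflected base from Corollary \ref{cor invt rank one simple inclusion}, and transport it back to $\Sigma$ using Proposition \ref{prop: reflections on HC}(2) together with Lemma \ref{lemma trans local invce condition}. Your induction on the length of the chain is just an unrolled version of the paper's composite of translations $t^*_{m_{\alpha_i,1}\alpha_i}$, and your remark that the local condition is symmetric under $\beta\mapsto-\beta$ fills in a sign detail the paper leaves implicit.
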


\begin{proof}
We claim that 
\[
HC_{\Sigma}(\ZZ_{(\g,\k)})\sub \bigcap\limits_{\alpha\in\Delta_{\Sigma,ad}^+}J_{\a,\Sigma,\alpha},
\]
which implies the statement by Lemma \ref{lemma local desc J_a}.  

To prove our claim, observe first that by Corollary \ref{cor invt rank one simple inclusion}, $HC_{\Sigma}(\ZZ_{(\g,\k)})\sub J_{\a,\Sigma,\alpha}$ for all simple $\alpha$.  Now suppose that $\beta\in \Sigma':=r_{\alpha_k}\cdots r_{\alpha_1}\Sigma$, where $\alpha_i\in r_{\alpha_{i-1}}\cdots r_{\alpha_1}\Sigma$ is a singular root.  By Corollary \ref{cor invt rank one simple inclusion}, $HC_{\Sigma'}(\ZZ_{(\g,\k)})\sub J_{\a,\Sigma',\beta}$; however, by (2) of Proposition \ref{prop: reflections on HC} this implies that
\[
HC_{\Sigma}(\ZZ_{(\g,\k)})\sub t_{m_{\alpha_k,1}\alpha_k}^*\left(\cdots\left(t_{m_{\alpha_1,1}\alpha_1}^*(J_{\a,\Sigma',\beta})\right)\right)=J_{\a,\Sigma,\beta},
\]
where the final equality follows from Lemma \ref{lemma trans local invce condition}.  This completes the proof.

\end{proof}



\subsection{Surjectivity of the Harish-Chandra map} \label{sec: bijectivity inv dist}

\subsubsection{The Associated-Graded Functor}
Given an $\mathbb{N}$-filtered vector space $V=\bigcup_{i\ge 0}V_i$ where $V_i\subseteq V_{i+1}$, we can associate the space $\gr V=\bigoplus_{i\ge 0} V_{i+1}/V_i$. Given a linear map $f:V\rightarrow W$ between two filtered vector spaces (which preserves the filtration), we can take the corresponding map $\gr f: \gr V\rightarrow \gr W$ defined by $\gr f(v_i)=f(v_i)+W_{i-1}$ for $v_i\in V_i$ and $V_{-1},W_{-1}:=\{0\}$.  The following lemma is well-known.

\begin{lemma}\label{lemma assoc gr reflects}
    Suppose that $f:V\rightarrow W$ is a map of filtered vector spaces. If $\gr f$ is bijective then so is $f$.
\end{lemma}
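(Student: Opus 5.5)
We will prove injectivity and surjectivity of $f$ from the corresponding properties of $\gr f$, by induction on the filtration degree. Throughout we assume, as is implicit in the notation and in all applications in the paper, that $f(V_i)\sub W_i$. We also assume the filtrations are exhaustive, $V=\bigcup_i V_i$ and $W=\bigcup_i W_i$, with $V_{-1}=W_{-1}=0$. The statement is then read degreewise: $\gr f$ is the direct sum of the induced maps $\gr_i f\colon V_i/V_{i-1}\to W_i/W_{i-1}$, and bijectivity of $\gr f$ means each $\gr_i f$ is bijective. We will show by induction on $i$ that $f$ restricts to a bijection $f_i\colon V_i\to W_i$. Exhaustiveness then gives bijectivity of $f$.

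The base case $i=0$ is immediate, since $V_{-1}=W_{-1}=0$ and so $\gr_0 f=f|_{V_0}\colon V_0\to W_0$.

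For the inductive step, consider the commutative diagram of short exact sequences
\[
0\to V_{i-1}\to V_i\to V_i/V_{i-1}\to 0,\qquad 0\to W_{i-1}\to W_i\to W_i/W_{i-1}\to 0,
\]
with vertical maps $f_{i-1}$, $f_i$ and $\gr_i f$. The outer two vertical maps are isomorphisms, by the induction hypothesis and the hypothesis on $\gr f$ respectively. The five lemma, or equivalently the snake lemma, then shows that $f_i$ is an isomorphism.

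The same argument can be written out by hand. For injectivity, take $v\in V_i$ with $f(v)=0$. Its image in $W_i/W_{i-1}$ vanishes, so $v\in V_{i-1}$ by injectivity of $\gr_i f$, and then $v=0$ by induction. For surjectivity, take $w\in W_i$. Choose $v\in V_i$ with $f(v)\equiv w$ modulo $W_{i-1}$, which is possible by surjectivity of $\gr_i f$. Then $w-f(v)\in W_{i-1}$ lies in $f(V_{i-1})$ by induction.

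The only delicate point is the bookkeeping about filtrations. We need exhaustiveness and $V_{-1}=W_{-1}=0$, and $\gr f$ must be understood as the map induced degree by degree, not as an arbitrary map between the direct sums. Once these conventions are fixed, the argument is purely formal.
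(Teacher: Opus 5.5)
Your argument is correct. The paper gives no proof of this lemma (it is simply labelled well-known), and your degreewise induction via the five lemma on $0\to V_{i-1}\to V_i\to V_i/V_{i-1}\to 0$ is the standard argument being invoked; your conventions (exhaustive filtrations, $V_{-1}=W_{-1}=0$) match the paper's definition of $\gr f$.

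Note also that your injectivity half works on its own: injectivity of each $\gr_i f$ already forces injectivity of $f$. That one-sided form is what the paper actually uses in Corollary \ref{cor assoc graded}(2).
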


\subsubsection{Proof of surjectivity}  To prove the bijectivity of $HC$, we use the Chevalley restriction theorem of symmetric superspaces. Recall the dual Chevalley restriction map $CR^*=\operatorname{gr}HC$ from Section \ref{sec: Chevalley map and injectivity}. By \cite{A2, RSS}, the image of $CR^*$  is equal to the set $I_\a\subset S(\a)$ of $W_{\a}$-invariant polynomials such that for all singular roots $\alpha$, we have:
		\[
		(D_{\alpha}^rf)(\lambda)=0 \text{ if } (\lambda,\alpha)=0,
		\]
		\[
		(D_{\alpha}^rf)(\lambda)\in\langle h_\alpha\rangle,
		\]
		where $r=1,3,\dots,m_{\alpha,1}-1$.

\begin{prop}
    One has $\operatorname{gr} J_\a=I_\a$. Therefore, $HC$ is surjective onto $J_\a$.
\end{prop}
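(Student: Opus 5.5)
The plan is to prove the two inclusions $\operatorname{gr}J_\a\sub I_\a$ and $I_\a\sub\operatorname{gr}J_\a$. Here $\operatorname{gr}J_\a$ is identified with the span of the leading terms $T(f)$, $f\in J_\a$, computed with respect to the degree filtration. Surjectivity then follows formally. Corollary \ref{cor invce for HC} gives $HC(\ZZ_{(\g,\k)})\sub J_\a$, and Corollary \ref{cor assoc graded}(3) gives $\operatorname{gr}HC(\ZZ_{(\g,\k)})\cong CR^*(S(\p)^\k)=I_\a$, using that $I_\a$ is stable under the antipode $\sigma$, since all its conditions are homogeneous. Once $\operatorname{gr}J_\a=I_\a$, the inclusion $HC(\ZZ_{(\g,\k)})\sub J_\a$ of filtered spaces with equal associated graded spaces is an equality, by Lemma \ref{lemma assoc gr reflects} applied to the inclusion map.

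\textbf{The inclusion $\operatorname{gr}J_\a\sub I_\a$.} Take $f\in J_\a$ of degree $d$ with top part $f_d$.
\begin{enumerate}
\item $W$-invariance of $f_d$ is immediate, because the $\rho$-shift only alters lower-order terms.
\item For a singular $\alpha$, the condition on $f$ says that $g(\lambda):=f(\lambda-\rho)$ satisfies $g(\mu+r\alpha)-g(\mu-r\alpha)=0$ for $(\mu,\alpha)=0$ and $1\le r\le n_\alpha$.
\item Taylor expand in $r$: $g(\mu+r\alpha)-g(\mu-r\alpha)=2\sum_{k\text{ odd}}\frac{r^k}{k!}(D_\alpha^kg)(\mu)$.
\item Let $H_\alpha=\{(\mu,\alpha)=0\}$. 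Vanishing at the $n_\alpha$ distinct values $r=1,\dots,n_\alpha$ says that the odd polynomial $P(r)=\sum_{k\text{ odd}}\frac{r^k}{k!}(D^k_\alpha g)|_{H_\alpha}$ is divisible by $\prod_{j=1}^{n_\alpha}(r^2-j^2)$. Hence $P(r)$ lies in the span of $r\prod_j(r^2-j^2)\cdot r^{2i}$.
\item Pass to top degree by the substitution $\mu\mapsto t\mu$, $r\mapsto tr$ and take leading homogeneous components as $t\to\infty$. Since the roots $\pm j$ scale to $0$, $P$ for $g_d=f_d$ becomes divisible by $r^{2n_\alpha+1}$ on $H_\alpha$.
\item This says exactly that $D^k_\alpha f_d$ vanishes on $H_\alpha$ for $k=1,3,\dots,2n_\alpha-1=m_{\alpha,1}-1$, i.e. $D^k_\alpha f_d\in\langle h_\alpha\rangle$.
\end{enumerate}
One must check that the top-degree bookkeeping works, since $D^k_\alpha$ lowers degree by $k$; weighting $r$ with degree $1$ makes the expression homogeneous. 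This gives $T(f)\in I_\a$. The paper lists both a vanishing condition and an ideal-membership condition in $I_\a$, but for a polynomial on $H_\alpha$ these say the same thing, provided $h_\alpha$ is not identically zero on $H_\alpha$.

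\textbf{The inclusion $I_\a\sub\operatorname{gr}J_\a$.} This is the main obstacle. For each homogeneous $p\in I_\a$ we need some $f\in J_\a$ with leading term $p$. A direct construction would mean lifting the finitely many conditions degree by degree, which is awkward. Instead I would use a dimension count in each filtration degree, combined with the inclusion $\operatorname{gr}HC(\ZZ_{(\g,\k)})=I_\a$ already in hand:
\[
I_\a=\operatorname{gr}HC(\ZZ_{(\g,\k)})\sub\operatorname{gr}J_\a\sub I_\a.
\]
The first inclusion holds because $HC(\ZZ_{(\g,\k)})\sub J_\a$ as filtered subspaces of $S(\a)$, with the filtration on $HC(\ZZ)$ induced from $S(\a)$. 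That matches the filtration in Corollary \ref{cor assoc graded}, because $\operatorname{gr}HC$ is injective by the same corollary, so $HC$ is strict. The last inclusion is the previous step.

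So the only real work is the Taylor-expansion argument of the previous step, together with checking that the filtration on $\ZZ_{(\g,\k)}$ agrees with the one induced by degree in $S(\a)$. For the latter I would use Lemma \ref{assoc_graded}: leading terms of $HC(\pi\lambda(p))$ are $\sigma CR^*(p)$, and these are nonzero for $p\neq0$.
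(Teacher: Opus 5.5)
Your proposal is correct and follows the paper's proof. You prove $\operatorname{gr}J_\a\subseteq I_\a$ by showing that the $\rho$-shifted odd part along $\alpha$ is divisible by $r\prod_{j=1}^{n_\alpha}(r^2-j^2)$, so its top-degree part is divisible by $r^{2n_\alpha+1}$, and you then close the gap with the sandwich $I_\a=\operatorname{gr}HC(\ZZ_{(\g,\k)})\subseteq\operatorname{gr}J_\a\subseteq I_\a$ from Corollary \ref{cor invce for HC} and the Chevalley restriction theorem. The only differences are cosmetic: your Taylor expansion on $H_\alpha$ treats isotropic and non-isotropic singular roots uniformly, where the paper splits $\a=\ker\alpha\oplus\C h_\alpha$ and handles isotropic roots separately; also, your caveat should read ``provided $h_\alpha\neq0$'', since $h_\alpha$ vanishes on $H_\alpha$ by definition.
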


We note that the algebra $J_\a$ depends on the choice of $\Sigma$ whereas the algebra $I_\a$ does not.

\begin{proof}
The map $CR:S(\p)^\k\to I_{\a}$ is an isomorphism, so by Lemmas \ref{assoc_graded} and \ref{lemma assoc gr reflects}, the first statement implies the second.  Moreover, it is enough to show that $\operatorname{gr} J_\a\subseteq I_\a$, as follows from the following diagram
\[
\xymatrix{\operatorname{gr}(\ZZ_{(\g,\k)})\ar[rr]^{\operatorname{gr}(\operatorname{HC})} \ar[d]_{\cong}  & & \operatorname{gr}(J_\a) \ar[d]_{\subseteq}  \\ 
S(\p)^\k \ar[rr]^{CR\ \cong} & & I_{\a}
}
\]
Let $f\in J_\a$. Then $f$ is $W_{\smallbullet}$-invariant, and so $\operatorname{gr}f$ is $W$-invariant. It remains to show the condition for singular roots.

Let $\alpha$ be a  singular root. Then 
	\[
	f(\lambda+r\alpha)=f(\lambda-r\alpha),\quad1\le r\le n_{\alpha}=\frac{m_{\alpha,1}}{2}.
	\]
for any $\lambda\in H_{\alpha}:=\left\{ \lambda\in\mathfrak{a}^{*} \mid (\lambda+\rho,\alpha)=0\right\} $.
Let $h_\alpha$ be the dual element of $\alpha$, that is $\lambda(h_\alpha)=(\lambda,\alpha)$ for any $\lambda\in\a^*$.  Denote $m:=\deg f$ and let $\bar{f}$ be the degree $m$-term of $f$. 
We need to show that $D_{\alpha}^r\left(\ol{f}\right)\in\left\langle h_{\alpha}\right\rangle$ for	
$r=1,3,\ldots,m_{\alpha,1}-1$.

Suppose first that $\alpha$ is not isotropic.
 Then $\mathfrak{a}=\ker\alpha\oplus\text{span}\left\{h_\alpha\right\} $.  We need to prove that $\bar f\in\sum_{i\in A}  h_\alpha^i S(\ker\alpha)$ where $A=2\Z_{\ge 0}\cup \Z_{\ge m_{\alpha,1}}$, namely all odd degrees of $h_\alpha$ are greater than $m_{\alpha,1}$. 
Note that $f_\rho(\lambda):=f(\lambda-\rho)$ has the same top degree element as $f$ so it is enough to prove the claim for $f_\rho$.
	Write $f_\rho=\sum_{i}k_{i}h_{\alpha}^{i}$ where $k_{i}\in S(\ker\alpha)$. 
    Note that $k_i(\lambda+r\alpha)=k_i(\lambda)$ for any $r\in\mathbb Z$.  
    Hence
for $r=1,\ldots,n_{\alpha}$, we have
	\[
	\sum_{i}k_{i}r^{i}
        =\sum_{i}k_{i}\left(-r\right)^{i},
	\]
	which is equivalent to 
	\[
	\sum_{ i \text{ odd}} k_{i}r^{i}=r\sum_{i=0}k_{2i+1} (r^2)^i=0 \text{ for }r=1,\ldots,n_{\alpha}.
	\]
It follows that the function $f_\rho':=\sum_{ i \text{ odd}} k_{i}h_\alpha^{i}$ has the form 
$$f_\rho'=Kh_\alpha\left(h_\alpha^2-1^2\right)\left(h_\alpha^2-2^2\right)\cdots\left(h_\alpha^2-\left(\frac{m_{\alpha,1}}{2}\right)^2\right)$$
for some $K\in S(\ker\alpha)$.
 Hence the $m$-degree term of $f_\rho'$ is divisible by $h_\alpha^{m_{\alpha,1}+1}$, which implies our desired claim for $f_{\rho}$.

If $\alpha$ is isotropic, then we have that $f(\lambda+\alpha)=f(\lambda)$ for $\lambda\in H_{\alpha}$. Also, by Lemma \ref{lemma classification rank one}, in this case $m_{\alpha,1}=2$.  This implies that $D_{\alpha}(f)$ vanishes on $H_{\alpha}$, and thus we may write $D_{\alpha}(f)=(h_{\alpha}+(\rho,\alpha))g$ for some $g\in S(\a)$.  This implies that $f=(h_{\alpha}+(\rho,\alpha))\tilde{g}+k$, where $k\in S(\ker\alpha)$ and $\tilde{g}\in S(\a)$ satisfies $D_{\alpha}(\tilde{g})=g$. Hence we see that $D_{\alpha}(\ol{f})=h_{\alpha}\ol{g}$ or is 0, so we are done.
\end{proof}

\section{Ghost Distributions and their Harish-Chandra image}
\label{Sec: Ghost proof}

In this section we study the Harish-Chandra image of ghost distributions on supersymmetric spaces.  We continue to work with a fixed base $\Sigma\sub\Delta$.

\subsection{Ghost distributions}

Let $\k'=\k_{\ol{0}}\oplus\p_{\ol{1}}$.  Note that because $\k_{\ol{0}}$ acts locally finitely on $\UU\g/(\UU\g)\k$, $\k'$ will as well; in fact we have an isomorphism of $\k'$-modules (\cite[Prop.~5.7]{Sh1}):
\[
\UU\g/(\UU\g)\k\cong\Ind_{\k_{\ol{0}}}^{\k'}\UU\g_{\ol{0}}/(\UU\g_{\ol{0}})\k_{\ol{0}}.
\]
Define the ghost distributions of $(\g,\k)$ to be:
\[
\AA_{(\g,\k)}:=\left(\UU\g/(\UU\g)\k\right)^{\k'}.
\]
Note that if we write $v_{\k'}$ for some nonzero element of $(\UU\k'/(\UU\k')\k_{\ol{0}})^{\k'}$, then we have a natural isomorphism of vector spaces (\cite[Prop.~7.1]{Sh1}):
\begin{equation}\label{eqn ghost dist pres}
\left(\UU\g_{\ol{0}}/(\UU\g_{\ol{0}})\k_{\ol{0}}\right)^{\k_{\ol{0}}}\to\AA_{(\g,\k)},  \ \ \ \ z\mapsto z v_{\k'}.
\end{equation}
We consider the Harish-Chandra map $HC:\AA_{(\g,\k)}\to S(\a)$ as described in Section \ref{section HC}.  In \cite[Thm.~1.1]{Sh2} it is shown that (\ref{eqn ghost dist pres}) is an embedding. 

To understand the associated graded of the image, we use \cite[Cor.~3.3]{Sh2} which says the following: there exist nonzero elements $h_1,\dots,h_k\in S(\a)$, with $k=\dim\p_{\ol{1}}/2=\dim\n_{\ol{1}}/2$, such that $HC(D v_{\k'})=\pm h_1\cdots h_kHC(D)+\text{l.o.t.}$, where we use the notation of (\ref{eqn ghost dist pres}).  In fact, it follows from the proof of \cite[Thm. 3.1]{Sh2} that  the associated graded is
\begin{equation}\label{eq:graded dim}
\operatorname{gr}HC(\AA_{(\g,\k)})=
\prod_{\alpha\in\Delta_1^+} h_\alpha^{n_\alpha}\cdot S(\a)^W.
\end{equation} 

The main theorem and central goal of this section is now the following statement.

\begin{thm}
    We have $HC(\AA_{(\g,\k)})=M_{\a,\Sigma}$.
\end{thm}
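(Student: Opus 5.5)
The proof will follow the same two-step scheme as for Theorem \ref{main thm}: first show $HC(\AA_{(\g,\k)})\sub M_{\a,\Sigma}$ by rank-one reduction and change of base, then show equality by comparing associated graded spaces.

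\textbf{Inclusion.} By Lemma \ref{lemma local desc M_a} it suffices to show $HC_{\Sigma}(\AA_{(\g,\k)})\sub M_{\a,\Sigma,\beta}$ for every $\beta\in\Delta_{\Sigma,ad}$. For $\beta=\alpha$ simple, the commutative square (\ref{eq: comm. square}) with Corollary \ref{cor:HC_alpha} gives $HC(D)=HC_{\alpha}(\operatorname{res}_{\alpha}D)$. Since $\operatorname{res}_\alpha$ is $\g(\alpha)$-equivariant on the left and $\k(\alpha)'\sub\k'$, it sends $\AA_{(\g,\k)}$ into $\AA_{(\g(\alpha),\k(\alpha))}$. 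The rank-one images are known from \cite[Sec.~9]{Sh2}, using the classification in Lemma \ref{lemma classification rank one}. For singular $\alpha$ the image is the sign-twisted condition, exactly as in the $\o\s\p(2|2n)$ example with generators $g_1,g_2$. For regular $\alpha$ it is $T_{\alpha}\cdot S(\a)^{(r_\alpha)_\smallbullet}$, computed with respect to $\rho_\alpha$ and the rank-one pure Weyl vector. To pass from the rank-one data to the global data one needs $(\rho^{\text{pure}}_\Sigma,\alpha)=(\rho^{\text{pure}}_\alpha,\alpha)$ for $\alpha$ simple. This should follow as in Lemma \ref{lemma local global rho}, since $\rho^{\text{pure}}-\rho$ is supported on regular odd roots, which are orthogonal to one another by Corollary \ref{cor reg odd roots ortho}. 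Together these give $HC_\Sigma(\AA)\sub M_{\a,\Sigma,\alpha}$ for simple $\alpha$.

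To reach all admissible roots, I will prove a ghost analogue of Proposition \ref{prop: reflections on HC}(2). For $\alpha$ singular simple, the claim is $HC_{\Sigma}(D)=\pm t^*_{m_{\alpha,1}\alpha}HC_{r_\alpha\Sigma}(D)$, proved by the same Kac-module argument: the lowest vector of $\UU\g(\alpha)f_\lambda$ is $\bigwedge^{top}\g(\alpha)_{-1}f_\lambda$. The new feature is that $D$ is only $\k'$-invariant, so the sign comes from $D$ passing through odd elements. This is the statement $hc_{r_\alpha\Sigma}=(-1)^{n_\alpha}hc_\Sigma$ quoted in the introduction, and the sign is irrelevant for membership in $M_{\a,\Sigma,\beta}$. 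Iterating as in Corollary \ref{cor invce for HC}, with the second half of Lemma \ref{lemma trans local invce condition}, then gives the inclusion for all $\beta\in\Delta_{\Sigma,ad}$.

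\textbf{Equality.} The map $HC$ is injective on $\AA$ by (\ref{eq:graded dim}) and Lemma \ref{lemma assoc gr reflects}. By (\ref{eq:graded dim}) it remains to show $\gr M_{\a,\Sigma}\sub \prod_{\alpha\in\Delta_1^+}h_\alpha^{n_\alpha}S(\a)^W$. Write $f=gT_\Sigma$ with $g$ $W_\smallbullet$-invariant. The leading term of $T_\Sigma$ is $\prod_{\Delta^+_{reg,odd}}h_\alpha^{n_\alpha}\prod_{\Delta^+_{iso}}h_\alpha$. It remains to extract $h_\alpha^{n_\alpha}$ for singular non-isotropic $\alpha$, and to check $W$-invariance of the quotient.

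For the first, take $\alpha$ admissible singular and expand $f(\lambda-\rho)=\sum k_ih_\alpha^i$ as in the proof for $J_\a$. The conditions $f(\lambda+r\alpha)=(-1)^rf(\lambda-r\alpha)$ for $1\le r\le n_\alpha$ force the relevant parity part of $f$ to vanish at $h_\alpha=\pm r$. This should yield divisibility of the top term by $h_\alpha^{n_\alpha}$. Conjugating by $W'$ and using Lemma \ref{lemma conj props}(3) covers all singular roots, after checking the leading terms transform correctly. For the second, the leading term of $g$ is $W$-invariant, and the product of the $h_\alpha^{n_\alpha}$ over odd positive roots is $W$-(anti)invariant up to sign. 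This should match $S(\a)^W$, because $\gr$ of both sides already coincides with the stated graded dimension.

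\textbf{Main obstacle.} I expect the hardest step to be this graded divisibility argument for singular roots, namely verifying that the sign-twisted conditions give exactly $h_\alpha^{n_\alpha}$ and not fewer powers. The relative positions of the points $\pm r$ are fixed by $\rho$ versus $\rho^{\text{pure}}$, and this needs care in the non-interlacing case.
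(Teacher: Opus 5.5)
Your architecture is the paper's: rank-one images from \cite[Sec.~9]{Sh2} at simple roots, a singular change-of-base formula together with Lemmas \ref{lemma trans local invce condition} and \ref{lemma local desc M_a} to reach all admissible roots, and then $\gr M_{\a}\sub N_{\a}$ checked against (\ref{eq:graded dim}). However, the singular change of base for ghost distributions is only asserted, not proved.

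The Kac-module argument of Proposition \ref{prop: reflections on HC} gives $g\in(1+\UU^+\k(\alpha))f_\lambda$ for the normalized lowest vector $g$ of $\UU\g(\alpha)f_\lambda$. This says nothing about $D(g)$ when $D$ is only $\k'$-invariant. What you need is $c\,g\in(1+\UU^+\k'(\alpha))f_\lambda$ with $c\neq 0$ independent of $\lambda$. A $\lambda$-dependent factor would give no polynomial identity, so your remark that ``the sign is irrelevant'' presupposes exactly this. The phrase ``$D$ passing through odd elements'' does not supply it.

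The paper's device is an element $a\in A$ with $e^{\alpha}(a)=\pm\sqrt{-1}$. After reducing $\g(\alpha)$ to $\o\s\p(2|2n)$ or $\g\l(1|1)\times\g\l(1|1)$, $\Ad(a)$ acts by $\pm\sqrt{-1}$ on $\g_{\pm\alpha}$ and trivially on $\c(\a)$. Hence it carries $\k(\alpha)$ onto $\k'(\alpha)$. Applying $a$ to $g=(k+1)f_\lambda$ gives $e^{-m_{\alpha,1}\alpha}(a)\,g=(\Ad(a)k+1)f_\lambda$ with $\Ad(a)k\in\UU^+\k'(\alpha)$. Therefore $c=e^{-m_{\alpha,1}\alpha}(a)=(-1)^{n_\alpha}$.

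Your heuristic can be implemented directly. For $y\in\g_{-\alpha}$ one has $y+\theta y\in\k_{\ol{1}}$ but $y-\theta y\in\p_{\ol{1}}\sub\k'$. So each of the $n_\alpha$ times you trade a $y$ for $\theta y$ while reducing $\bigwedge^{top}\g_{-\alpha}f_\lambda$ costs one relative sign against $\delta_{eK}$. But that bookkeeping, including the $\c(\a)$-terms it produces, has to be carried out; $\Ad(a)$ does it in one line.

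In the graded step, the obstacle you anticipate is not one. For singular $\alpha$ only $\rho_\Sigma$ enters, so after the shift the nodes are exactly $h_\alpha=\pm r(\alpha,\alpha)$. The $h_\alpha$-odd part of $f(\lambda-\rho)$ vanishes at $0$ and at even $r$, and the $h_\alpha$-even part vanishes at odd $r$. This gives $1+2\lfloor n_\alpha/2\rfloor$ resp.\ $2\lceil n_\alpha/2\rceil$ factors of $h_\alpha$ in top degree, both $\geq n_\alpha$, and $\rho^{\text{pure}}$ plays no role.

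The loose point is rather your closing sentence, which as written is circular. Argue as follows.
Since $h_\alpha$ is prime to $\overline{T_\Sigma}$, $\bar g$ is divisible by $h_\alpha^{n_\alpha}$ for admissible singular non-isotropic $\alpha$. Being $W$-invariant, $\bar g$ is then divisible by $P:=\prod_{\alpha\in\Delta^+_{sing,noniso}}h_\alpha^{n_\alpha}$; here you need the full $W$ of Lemma \ref{lemma conj props}(3), not $W'$.
Moreover $P$ is genuinely $W$-invariant. For a reflection $r_\gamma$, the positive singular roots sent to negative ones pair off under $\beta\mapsto -r_\gamma\beta$ with no fixed points, since no singular root is proportional to a regular one.
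Hence $\bar f=\prod_{\alpha\in\Delta^+_{odd}}h_\alpha^{n_\alpha}\cdot(\bar g/P)$ with $\bar g/P\in S(\a)^W$, i.e.\ $\bar f\in N_{\a}$.

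Your hedge about signs is warranted. $\overline{T_\Sigma}$ and $\prod_{\Delta^+_{odd}}h_\alpha^{n_\alpha}$ need not be $W$-invariant: for $(\o\s\p(3|2),\o\s\p(2|2))$ both equal $h_\alpha$, and $r_\alpha h_\alpha=-h_\alpha$. They do, however, carry the same character, which is all that matters.

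Finally, injectivity is not part of this statement. Also, (\ref{eq:graded dim}) alone would not give it, since it describes only the image.
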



        

\subsection{Invariance corresponding to simple roots} 

Let $\alpha\in\Sigma$ be a simple root, and consider the commutative diagram  (\ref{eq: comm. square}).  Because $\operatorname{res}_{\alpha}$ is $\g(\alpha)$-equivariant, we may take $\k(\alpha)'$-invariance in the bottom triangle of (\ref{eq: comm. square}) to obtain the following commutative diagram, which is the analogue to (\ref{eq comm diag inv dist}):
\[
\xymatrix{
\AA_{(\g,\k)}\ar[rr]\ar[dr] && \AA_{(\g(\alpha),\k(\alpha))} \ar[dl] \\ & S(\a) & }
\]
We thus obtain that for any simple root $\alpha\in\Sigma$,
\[
HC(\AA_{(\g,\k)})\sub HC(\AA_{(\g(\alpha),\k(\alpha))}).
\]


\begin{prop}\label{prop rank one ghost}
If $(\g,\k)$ is a rank-one supersymmetric pair with chosen simple root $\alpha$ then $HC(\AA_{(\g,\k)})=M_{\a,\{\alpha\}}$.
\end{prop}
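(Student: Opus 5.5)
I will argue by the classification of rank-one pairs in Lemma \ref{lemma classification rank one}, treating the split abelian factor $\a'$ as harmless. In every case the strategy mirrors Proposition \ref{prop_rank_one}. First I show $HC(\AA_{(\g,\k)})\sub M_{\a,\{\alpha\}}$. Then I show the two sides have the same associated graded, and conclude with Lemma \ref{lemma assoc gr reflects}. For the graded comparison I use (\ref{eq:graded dim}): here $\operatorname{gr}HC(\AA_{(\g,\k)})=h_{\alpha}^{n_{\alpha}}h_{2\alpha}^{n_{2\alpha}}S(\a)^W$, with only the odd roots contributing. Since $\dim\a=1$, this is an explicit subspace of $\C[t]$, and it suffices to show $\operatorname{gr}M_{\a,\{\alpha\}}$ is contained in it. The graded pieces then have equal dimension, so the inclusion of images becomes an equality.

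\emph{Regular case.} Here $M_{\a,\{\alpha\}}=T_{\alpha}S(\a)^{(r_{\alpha})_{\smallbullet}}$, and $T_\alpha$ has degree $n_\alpha$. Its leading term is $h_\alpha^{n_\alpha}$, up to the $2\alpha$ contribution in the $\o\s\p(1|2)\times\o\s\p(1|2)$ case, where one reads the table carefully. So $\operatorname{gr}M_{\a}$ matches (\ref{eq:graded dim}).

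For the inclusion $HC(\AA)\sub T_\alpha S(\a)^{(r_\alpha)_\smallbullet}$, I use the presentation (\ref{eqn ghost dist pres}): each ghost distribution is $zv_{\k'}$ with $z$ an even invariant. I then need two facts.
\begin{enumerate}
\item $HC(zv_{\k'})=HC_{\ol 0}(z)$ up to a $\rho$-shift, multiplied by $HC(v_{\k'})$. I would justify this by evaluating on spherical highest weight functions $f_\lambda$ via (\ref{eqn dist}).
\item $HC(v_{\k'})$ is a scalar multiple of $T_{\alpha}$ times an $(r_{\alpha})_{\smallbullet}$-invariant polynomial.
\end{enumerate}
Fact (2) is the content of the rank-one computations in \cite[Sec.~9]{Sh2}. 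Concretely, $HC(v_{\k'})(-\lambda)$ vanishes exactly at those $\lambda\in\Lambda^+$ whose spherical module $\UU\g f_\lambda$ does not contain $I_{\k'}(\C)$. Those $\lambda$ are the zeros of $T_\alpha$, and the anti-invariance is controlled by the sign $(-1)^{n_\alpha}$. I will cite that section rather than recompute.

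\emph{Singular case.} The pair is $(\o\s\p(2|2n),\o\s\p(1|2n))$ or $(\g\l(1|1)\times\g\l(1|1),\g\l(1|1))$, and $W=1$. The graded dimension is $t^{n}\C[t]$ (respectively $t\C[t]$), which has codimension $n$.

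The conditions $f(\lambda+r\alpha)=(-1)^rf(\lambda-r\alpha)$ for $1\le r\le n$ at the single point $\lambda=-\rho$ are $n$ independent linear conditions. So $M_{\a}$ also has codimension $n$, and its graded is contained in $t^n\C[t]$. The inclusion follows because the $n$ odd-part constraints force the odd degree coefficients, in the same way as in the proof that $\operatorname{gr}J_\a\subseteq I_\a$. Thus it remains to prove $HC(\AA)\sub M_\a$.

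For $\o\s\p(2|2n)$ I use the Kac modules $K(s)\to\C[G/K]$ from the proof of Proposition \ref{prop_rank_one}. By \cite[Sec.~9.1]{Sh2}, $f_{n-r}\in\UU\g f_{n+r}$. A $\k'$-invariant distribution acts on these two functions by scalars that differ by the sign $(-1)^r$, coming from the parity of the element of $\UU\g$ relating them; this is \cite[Thm.~9.1(iv)]{Sh2}. Combined with $\rho=-n\epsilon$, this gives the condition. The $\g\l(1|1)$ diagonal case is Gorelik's anti-center computation \cite{G1}. There the condition says $f$ is divisible by $h_\alpha+(\rho,\alpha)=T_\alpha$.

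\emph{Main obstacle.} I expect the hardest part to be the regular odd cases. There one must extract from \cite{Sh2} both the exact zero set of $HC(v_{\k'})$ and its $\rho^{\text{pure}}$-shifted anti-invariance, including the non-interlacing $\o\s\p$ and $\g\l$ rows where $\rho\neq\rho^{\text{pure}}$.
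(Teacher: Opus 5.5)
Your proof has a genuine gap: Fact (1) in the regular case is false as stated, and the argument you sketch for it does not work.

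\textbf{Why Fact (1) fails.} The even element $z\in(\UU\g_{\ol 0}/(\UU\g_{\ol 0})\k_{\ol 0})^{\k_{\ol 0}}$ is not a $G$-invariant differential operator on $G/K$. The distribution $z\delta_{eK}$ is only $K_0$-invariant and is supported on $G_0/K_0$. So it acts by a scalar on each $K_0$-spherical isotypic component of $\C[G_0/K_0]$. The function that $v_{\k'}$ produces from $f_\lambda$ restricts in general to a sum over several such components, on which $z$ has different eigenvalues. Evaluating $zv_{\k'}$ on $f_\lambda$ therefore gives a sum, not a product. Only the leading terms factor, and that is exactly \cite[Cor.~3.3]{Sh2}.

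Nothing in your justification uses regularity, and it visibly fails for $(\o\s\p(2|2n),\o\s\p(1|2n))$. There $\k_{\ol 0}=\s\p(2n)$, so the even invariants are all of $\C[t]$, and $z\mapsto zv_{\k'}$ is onto $\AA_{(\g,\k)}$. Fact (1) would then make $HC(\AA_{(\g,\k)})$ an ideal of $\C[t]$. But for $n=1$ the space $\{p:p(2)=-p(0)\}$ contains $t-1$ and not $t(t-1)$.

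\textbf{How to repair it.} Use the right action of $\ZZ_{(\g,\k)}$ on $\AA_{(\g,\k)}$ instead. An element $z\in\ZZ_{(\g,\k)}$ is a $G$-invariant operator acting on $f_\lambda$ by $HC(z)(-\lambda)$, so $HC(Dz)=HC(D)HC(z)$. In regular rank one, $HC(\ZZ_{(\g,\k)})=S(\a)^{(r_\alpha)_{\smallbullet}}$ by Proposition \ref{prop_rank_one}. Moreover $h_\alpha^{n_\alpha}S(\a)^W$ is free of rank one over $\gr S(\a)^{(r_\alpha)_{\smallbullet}}=S(\a)^W$. Hence $HC(\AA_{(\g,\k)})=S(\a)^{(r_\alpha)_{\smallbullet}}\cdot HC(v_{\k'})$, where $HC(v_{\k'})$ is the unique element of degree $n_\alpha$ up to scalar.

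Everything then reduces to showing $HC(v_{\k'})\in\C^{\times}T_{\alpha,\{\alpha\}}$. This has to be read off from \cite[Thm.~9.1]{Sh2} for each regular pair, using $\rho^{\text{pure}}$ rather than $\rho$ in the non-interlacing ones. At that point your argument rests on exactly the input the paper uses. The paper simply quotes the full answer of \cite[Thm.~9.1]{Sh2} for every rank-one pair and checks it against the explicit form of $M_{\a,\{\alpha\}}$. So the inclusion-plus-graded scaffolding adds nothing beyond that citation.

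\textbf{Singular case.} This is fine as it stands: the inclusion again comes from \cite[Thm.~9.1]{Sh2}, and your codimension count closes the argument. Two of your explanations there are inaccurate, though neither is needed:
\begin{enumerate}
\item The ghost conditions do not constrain only odd coefficients, as in the proof of $\gr J_\a\sub I_\a$. They constrain the odd part of $f(\lambda-\rho)$ at even $r$ and the even part at odd $r$.
\item The sign $(-1)^r$ is not a parity effect, since every element of $\UU\g$ of $\a$-weight $-2r\epsilon$ is even.
\end{enumerate}
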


\begin{proof}

We will prove this by inspection of all rank-one supersymmetric pairs.  First, suppose that $\Delta\sub\a^*$ is a rank-one root system, $\rho=-a\alpha$ and $\rho^{pure}=-b\alpha$ for $a\in\frac{1}{2}\Z$ and $b\in\mathbb{N}$. Define $h_{\alpha}\in\a$ by $\lambda(h_{\alpha})=(\alpha,\lambda)$, and write $k_{\alpha}:=(\alpha,\alpha)$.

	\renewcommand{\arraystretch}{1.3}	

\begin{table}[h!]

\centering
\begin{tabular}{|c|c|}
\hline
Case & $M_{\a,\{\alpha\}}$ \\
\hline
$\alpha$ isotropic & $h_{\alpha}S(\a)$ \\
\hline
$\alpha$ singular & $\{f\in S(\a):f(a\alpha+r\alpha)=(-1)^{r}f(a\alpha-r\alpha)\text{ for }1\leq r\leq b\}$ \\
nonisotropic & \\
\hline 
$\alpha$ regular & $S(\a)^{(r_{\alpha})_{\smallbullet}}\prod\limits_{s=0}^{n_{\alpha}-1}(h_{\alpha}-k_{\alpha}(b+n_{\alpha}-1-2s))$ \\
\hline
\end{tabular}
\end{table}   
Notice that $S(\a)^{(r_{\alpha})_{\smallbullet}}=\C[h_{\alpha}(h_{\alpha}-2a)]$.  On the other hand, we have the explicit computations of $HC(\AA_{(\g,\k)})$ for all rank-one pairs from \cite[Thm. 9.1]{Sh2}, which we copy below for the reader's convenience.  We slightly modify the notation from \emph{loc. cit.} (e.g.~we change $\Bbbk$ to $\C$), and add relevant details.  

\begin{enumerate}
		\item[(i)] $(\o\s\p(1|2)\times\o\s\p(1|2),\o\s\p(1|2))$: $(\alpha,\alpha)=1$, $\alpha(t)=1$, and 
		\[
		HC(\AA_{(\g,\k)})=S(\a)^{(r_{\alpha})_{\smallbullet}}(t+\frac{1}{2}).
		\]
		\item[(ii)] $(\g\l(1|1)\times\g\l(1|1),\g\l(1|1))$: $\alpha$ isotropic, and
		\[
		HC(\AA_{(\g,\k)})=h_{\alpha} S(\a).
		\]
		\item[(iii)] $(\g\l(m|n),\g\l(m-1|n)\times\g\l(1))$: $(\alpha,\alpha)=1/2$, $t=h_{\alpha}$, and
		\[
		HC(\AA_{(\g,\k)})= \C[t(t-n+m-1)]\langle t(t-1)\cdots(t-(n-1))\rangle.
		\]
		\item[(iv)] $(\o\s\p(2|2n),\o\s\p(1|2n))$: $(\alpha,\alpha)=1$, $t=h_{\alpha}$, and
		\[
		HC(\AA_{(\g,\k)})=\{p\in \C[t]:p(n+r)=(-1)^{r}p(n-r):1\leq r\leq n\},
		\]
		\item[(v)] $(\o\s\p(m|2n),\o\s\p(m-1|2n))$, $m\geq3$: $(\alpha,\alpha)=1$, $t=h_{\alpha}$, and
		\[
		HC(\AA_{(\g,\k)})= \C[t(t-2n+m-2)]\langle(t-1)(t-3)\cdots(t-(2n-1))\rangle.
		\]
		\item[(vi)] $(\o\s\p(m|2n),\o\s\p(m|2n-2)\times\s\p(2))$, $n\geq 2$: $(\alpha,\alpha)=1/2$, $t=h_{\alpha}$, and
		\[
		HC(\AA_{(\g,\k)})= \C[t(t+2n-m-1)]\langle (t+1)t(t-1)\cdots(t-(m-2))\rangle.
		\]
	\end{enumerate}

To help the reader check that the listed computations of $HC(\AA_{(\g,\k)})$ actually agree with $M_{\a,\{\alpha\}}$ in each case, we provide below a table with the relevant data about the restricted root systems for each of the above cases.

\renewcommand{\arraystretch}{1}	

\begin{table}[h!]

\centering
\begin{tabular}{|c|c|c|c|c|c|c|}
\hline
Case & Type & $m_\alpha$ &$m_{2\alpha}$ & $\rho$ & $\rho^{\text{pure}}$\\
\hline
(i) & reg & $(0|2)$ & $(2|0)$ & $\alpha$ & $\alpha$ \\
\hline
(ii) & sing & $(0|2)$ & $(0|0)$ & $-\alpha$ & $-\alpha$\\
\hline
(iii) & reg & $(2(m-2)|2n)$ &$(1|0)$ & $(m-n-1)\alpha$ & $(1-n)\alpha$ \\
\hline
(iv) & sing & $(0|2n)$ &$(0|0)$ & $-n\alpha$ & $-n\alpha$ \\
\hline
(v) & reg & $(m-2|2n)$ &$(0|0)$ & $(m/2-1-n)\alpha$ & $-n\alpha$ \\
\hline
(vi) & reg & $(4(n-2)|2m)$&$(3|0)$ & $(2n-m-1)\alpha$ & $(3-m)\alpha$ \\
\hline
\end{tabular}
\end{table}

\end{proof}

\begin{cor}\label{cor ghost rank one simple inclusion}
    For any simple root $\alpha\in\Sigma$, we have
    \[
    HC_{\Sigma}(\AA_{(\g,\k)})\sub M_{\a,\Sigma,\{\alpha\}}.
    \]
\end{cor}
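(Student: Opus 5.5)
The plan mirrors the proof of Corollary \ref{cor invt rank one simple inclusion}. Fix $\alpha\in\Sigma$. By the commutative triangle just above, obtained by taking $\k(\alpha)'$-invariants in (\ref{eq: comm. square}), we have $HC_{\Sigma}(\AA_{(\g,\k)})\sub HC_{\alpha}(\AA_{(\g(\alpha),\k(\alpha))})$. Here $HC_{\alpha}$ is the Harish-Chandra projection for the rank-one pair $(\g(\alpha),\k(\alpha))$ with positive system $\{\alpha\}$. Proposition \ref{prop rank one ghost} identifies the right-hand side with $M_{\a,\{\alpha\}}$. This is the space defined using the rank-one Weyl vector $\rho_{\alpha}$ and the rank-one pure Weyl vector $\rho_{\alpha}^{\text{pure}}$. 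Any split abelian factor $\a'$ only adds polynomial variables orthogonal to $\alpha$, so all conditions are imposed only through $h_\alpha$ and $(\lambda,\alpha)$.

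It therefore remains to show $M_{\a,\{\alpha\}}=M_{\a,\Sigma,\alpha}$. Every condition defining either space depends on $\lambda$ only through $(\lambda,\alpha)$ and through the shift data $(\rho,\alpha)$ or $(\rho^{\text{pure}},\alpha)$. We compare these data case by case.

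\textbf{Singular $\alpha$.} The condition is $f(\lambda+r\alpha)=(-1)^rf(\lambda-r\alpha)$ on the hyperplane $(\lambda+\rho,\alpha)=0$. By Lemma \ref{lemma local global rho} we have $(\rho_\Sigma,\alpha)=(\rho_\alpha,\alpha)$, so the hyperplanes coincide. The isotropic subcase is the divisibility statement $f\in T_{\alpha,\Sigma}S(\a)$, and $T_{\alpha,\Sigma}=h_\alpha+(\rho_\Sigma,\alpha)$ agrees with the rank-one version for the same reason.

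\textbf{Regular $\alpha$.} The $(r_\alpha)_{\smallbullet}$-action depends only on $(\rho,\alpha)$, again handled by Lemma \ref{lemma local global rho}. We then need $T_{\alpha,\Sigma}$ to equal the rank-one factor $\prod_s(h_\alpha+(\rho^{\text{pure}}_\alpha+(1-n_\alpha+2s)\alpha,\alpha))$. This reduces to showing $(\rho^{\text{pure}}_\Sigma,\alpha)=(\rho^{\text{pure}}_\alpha,\alpha)$.

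This pure analogue of Lemma \ref{lemma local global rho} is the main point to check. Since $\alpha$ is regular and simple, $r_\alpha$ permutes $\Delta^+\setminus\N\alpha$. It also preserves the parity type of each root (odd versus purely even) and the multiplicities, since these are $W$-invariant. Hence $r_\alpha$ fixes $\rho^{\text{pure}}_\Sigma-\rho^{\text{pure}}_\alpha$, and the argument from the regular case of Lemma \ref{lemma local global rho} gives $(\rho^{\text{pure}}_\Sigma-\rho^{\text{pure}}_\alpha,\alpha)=0$.

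The obstacle is justifying that $r_\alpha$ preserves the partition into odd and purely even roots. A root $\beta$ is odd exactly when some odd root of $\g$ restricts to it. One can lift $r_\alpha$ to $N_{K_{\ol 0}}(\a)$ and extend it to an automorphism of $\g$ normalizing $\h$ up to conjugation; this carries odd roots of $\g$ to odd roots and hence preserves oddness of restricted roots. Combining the two cases gives $HC_\Sigma(\AA_{(\g,\k)})\sub M_{\a,\Sigma,\alpha}$.
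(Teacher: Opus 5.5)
Your proposal is correct and follows the paper's proof. Like the paper, you reduce to the rank-one pair via the commutative triangle and Proposition \ref{prop rank one ghost}, then match the shift data: Lemma \ref{lemma local global rho} handles $(\rho,\alpha)$, and for regular $\alpha$ the same reflection argument handles $(\rho^{\text{pure}},\alpha)$. The parity-preservation step you flag as the obstacle is easier than your lifting argument suggests: a restricted root $\beta$ is odd exactly when $m_{\beta,1}>0$, and $W$ acts through $N_{K_{\ol 0}}(\a)$ by even automorphisms of $\g$, so it preserves each $m_{\beta,i}$ directly.
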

\begin{proof}
    We apply Proposition \ref{prop rank one ghost} along with the fact that $(\rho,\alpha)=(\rho_{\alpha},\alpha)$ for singular roots by Lemma \ref{lemma local global rho}, and $(\rho^{\text{pure}},\alpha)=(\rho^{\text{pure}}_{\alpha},\alpha)$ for $\alpha$ a regular root (using a reflection argument as in Lemma \ref{lemma local global rho}).
\end{proof}


\subsection{Changing positive systems}

\begin{prop}\label{prop ghost borel change}
Let $D\in\AA_{(\g,\k)}$ and $\alpha\in\Sigma$. One has
	\begin{enumerate}
		\item  If $\alpha$ is a regular root, then $HC_{r_{\alpha}\Sigma}(D)=HC_{\Sigma}(D)\circ r_{\alpha}$.
		\item  If $\alpha$ is a singular root, then $HC_{\Sigma}(D)=(-1)^{n_{\alpha}}t_{m_{\alpha,1}\alpha}^*(HC_{r_{\alpha}\Sigma}(D))$.  In particular,
        \[
        HC_{\Sigma}(\AA_{(\g,\k)})\sub t_{m_{\alpha,1}\alpha}^*(HC_{r_{\alpha}\Sigma}(\AA_{(\g,\k)})).
        \]
	\end{enumerate}
    
\end{prop}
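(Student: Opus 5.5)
We will follow the argument of Proposition \ref{prop: reflections on HC} closely, evaluating $HC_{\Sigma}(D)$ and $HC_{r_{\alpha}\Sigma}(D)$ on highest weight functions. Since $\Lambda^+\sub\a^*$ is Zariski dense (Section \ref{spherical weights}), it suffices to verify both formulas at $-\lambda$ for $\lambda\in\Lambda^+$. Here $D(f_\lambda)=HC_{\Sigma}(D)(-\lambda)$ by (\ref{eqn dist}). As in the earlier proof, we set $V=(\UU\g)f_{\lambda}\sub\C[G/K]$ and ask which vector of $V$ is highest with respect to $\n_{r_{\alpha}\Sigma}$. The key difference from Proposition \ref{prop: reflections on HC} is that $D$ is only $\k'$-invariant, so $D$ does not commute with the action of $\UU\g$ on functions. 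Instead we must use the pairing of $D$, viewed as a distribution at $eK$, with functions.

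\textbf{Regular case.} Since $\k'_{\ol 0}=\k_{\ol 0}$, $D$ is invariant under $K_0^\circ$. Choose a lift $\widetilde{r_\alpha}\in N_{K_0^\circ}(\a)$ of $r_\alpha$. Then $\widetilde{r_\alpha}^*f_\lambda$ is $\n_{r_\alpha\Sigma}$-highest of weight $r_\alpha\lambda$ and still takes the value $1$ at $eK$, so it equals the normalized function $f_{r_\alpha\lambda}$ for $r_\alpha\Sigma$. Invariance of $D$ under $K_0^\circ$ then gives $D(\widetilde{r_\alpha}^*f_\lambda)=D(f_\lambda)$, which is exactly (1) after unwinding (\ref{eqn dist}). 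This step is formal and verbatim the earlier argument.

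\textbf{Singular case.} Using Corollary \ref{cor:HC_alpha} and the $\g(\alpha)$-equivariance of $\operatorname{res}_\alpha$ (whose image is $\k(\alpha)'$-invariant), we reduce to the rank-one pair $(\g(\alpha),\k(\alpha))$, which by Lemma \ref{lemma classification rank one} is $(\o\s\p(2|2n),\o\s\p(1|2n))$ or $(\g\l(1|1)^{2},\g\l(1|1))$ up to split factors. Note that $n_{\alpha}$ is the same in both the global and the rank-one picture. For typical $\lambda$, $\UU\g(\alpha)f_\lambda$ is a Kac module. The $r_\alpha\Sigma$-highest vector is $g=X f_\lambda$ with $X$ a generator of ${\bigwedge}^{top}\g(\alpha)_{-1}$, and it has weight $\lambda-m_{\alpha,1}\alpha$. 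We must normalize $g$ to take value $1$ at $eK$, and compare $D(g)$ with $D(f_\lambda)$.

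The main computation is the following. Write $X=\prod_{i}y_i$ over a basis $y_i$ of $\g(\alpha)_{-1}$, and use the Iwasawa decomposition to rewrite each $y_i$ as $y_i=k_i+\theta$-type corrections with $k_i\in\k(\alpha)$. Since $D$ is $\k'$-invariant rather than $\k$-invariant, moving the odd part of $\k$ (which is $\theta$-anti-related to $\p_{\ol 1}\sub\k'$) across $D$ introduces a sign. The cleanest route is to pair the Kac module with $D$ through $v_{\k'}$, using the presentation (\ref{eqn ghost dist pres}). There $v_{\k'}$ corresponds to the top wedge of $\p_{\ol 1}$, and exchanging it with the top wedge of $\k_{\ol 1}$-directions inside $\g(\alpha)_{-1}\oplus\g(\alpha)_1$ contributes a sign $(-1)^{m_{\alpha,1}/2}=(-1)^{n_\alpha}$ from reordering $m_{\alpha,1}$ odd elements paired symplectically. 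This sign computation is the expected obstacle.

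If the general sign bookkeeping is unwieldy, we will fall back on a direct check. By the reduction above it suffices to verify the identity in the two rank-one cases, using the explicit images from \cite[Thm.~9.1]{Sh2} listed in Proposition \ref{prop rank one ghost}. For $\o\s\p(2|2n)$ with $\Sigma=\{\alpha\}$, $r_\alpha\Sigma=\{-\alpha\}$ and $\rho_{r_\alpha\Sigma}=\rho+2n\alpha$, so one checks directly that $p\mapsto(-1)^n p(\cdot-2n\alpha)$ carries the condition $p(n+r)=(-1)^rp(n-r)$ into its reflected counterpart. This fixes the sign, since both sides are determined by their values on a Zariski dense set. The $\g\l(1|1)^2$ case is immediate from $HC=h_\alpha S(\a)$. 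The "in particular" inclusion then follows at once from the formula.
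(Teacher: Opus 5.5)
\textbf{Verdict: the singular case has a genuine gap.} Your regular case is correct and is the paper's argument verbatim.

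In the singular case the sign $(-1)^{n_\alpha}$ is the entire content of (2), and neither of your routes produces it. The ``main computation'' is a heuristic, not an argument, as you acknowledge: rewriting the $y_i$ via Iwasawa, moving odd parts of $\k$ across $D$, and a sign from reordering $m_{\alpha,1}$ symplectically paired odd elements.

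The fallback also fails. Comparing the image sets $HC_{\Sigma}(\AA_{(\g,\k)})$ and $HC_{r_\alpha\Sigma}(\AA_{(\g,\k)})$ from \cite[Thm.~9.1]{Sh2} cannot give the elementwise identity $HC_\Sigma(D)=(-1)^{n_\alpha}t^*_{m_{\alpha,1}\alpha}(HC_{r_\alpha\Sigma}(D))$. In particular it cannot detect the sign, because these images are vector spaces: $p\mapsto p(\cdot-2n\alpha)$ and $p\mapsto -p(\cdot-2n\alpha)$ carry one onto the other equally well. Even ignoring the sign, a set-level match does not show that $HC_{r_\alpha\Sigma}\circ HC_\Sigma^{-1}$ is a translation. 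Zariski density of $\Lambda^+$ does not rescue this; it only lets you extend an identity you have already proved at dense points.

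\textbf{The missing idea} is an element of $A$ whose adjoint action interchanges $\k(\alpha)$ and $\k'(\alpha)$. Since $\g(\alpha)$ is of type I and $\a$ contains a grading element for it, you can choose $a\in A$ with $e^{\alpha}(a)=\pm\sqrt{-1}$. Then $a$ acts by $\pm\sqrt{-1}$ on $\g(\alpha)_{\ol 1}$ and by $\pm 1$ on $\g(\alpha)_{\ol 0}$, so $\Ad(a)$ swaps $\k(\alpha)$ and $\k'(\alpha)$. The argument then runs as follows.
\begin{enumerate}
\item Take $\lambda\in\Lambda^+$ typical, and let $g$ be the lowest weight vector of the Kac module $\UU\g(\alpha)f_\lambda$, normalized so that $g(eK)=1$.
\item By the Iwasawa decomposition, $\UU\g(\alpha)f_\lambda=\UU\k(\alpha)f_\lambda$, so $g=(k+1)f_\lambda$ with $k$ in the augmentation ideal $\UU^+\k(\alpha)$.
\item Apply $a$, using that $g$ has weight $\lambda-m_{\alpha,1}\alpha$. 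This gives $e^{-m_{\alpha,1}\alpha}(a)\,g=(k'+1)f_\lambda$ with $k'=\Ad(a)(k)\in\UU^+\k'(\alpha)$.
\item $\k'$-invariance of $D$ means exactly that $D$ annihilates $k'f_\lambda$. Hence $e^{-m_{\alpha,1}\alpha}(a)\,D(g)=D(f_\lambda)$.
\item Finally $e^{-m_{\alpha,1}\alpha}(a)=(\pm\sqrt{-1})^{-2n_\alpha}=(-1)^{n_\alpha}$.
\end{enumerate}

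You correctly saw that $\k'$-invariance only lets you discard $\UU^+\k'$-terms. What you were missing is the element $a$, which turns the $\UU^+\k(\alpha)$-term coming from Iwasawa into a $\UU^+\k'(\alpha)$-term. With it, no wedge-product sign bookkeeping or comparison of image sets is needed.
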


\begin{proof}
The case (1) is proven in the exact same way as in the proof of Proposition \ref{prop: reflections on HC}, where we use that ghost distributions are $K_0^\circ$-invariant.

Now let us assume $\alpha$ is singular.  
As in the proof of Proposition \ref{prop: reflections on HC}, we may assume that $(\g(\alpha),\k(\alpha))$ is either $(\o\s\p(2|2n),\o\s\p(1|2n))$, or $(\g\l(1|1)\times\g\l(1|1),\g\l(1|1))$.    
Therefore, by density, it suffices to check our formula for $\lambda\in\Lambda^+$ which is typical.  
Take $\lambda\in\Lambda^+$ typical, and let $f_\lambda$ be the corresponding eigenfunction on $G/K$ satisfying $f_{\lambda}(eK)=1$ (see (\ref{eqn f lambda})).  Let $g$ be a lowest weight vector of $\UU\g(\alpha)f_{\lambda}$, and assume further that $g(eK)=1$.  By the Iwasawa decomposition, we may write $g=(k+1)f_{\lambda}$, where $k\in\UU^+\k(\alpha)$.   As already noted in Proposition \ref{prop: reflections on HC}, the weight of $g$ is $t_{-m_{\alpha,1}\alpha}(\lambda)=\lambda-m_{\alpha,1}\alpha$.  
	
	Write $A$ for the subgroup of $G_0$ integrating $\a\sub\g_{\ol{0}}$, and let $a\in A$ have eigenvalues $\pm\sqrt{-1}$ on $\g(\alpha)_{\ol{1}}$, and eigenvalues $\pm1$ on $\g(\alpha)_{\ol{0}}$.  We note this is possible since $\a$ contains a grading operator for $\g(\alpha)$. Then we see that $\Ad(a)$ interchanges $\k(\alpha)$ and $\k'(\alpha)$ (in fact it is an interlacing automorphism of $(\g(\alpha),\k(\alpha))$ in the language of \cite{Sh2}).  Thus we obtain that
	\[
	e^{\lambda-m_{\alpha,1}\alpha}(a)(g)=a(g)=(k'+1)e^{\lambda}(a)f_{\lambda},
	\]
    for some $k'\in\UU^+\k(\alpha)\k(\alpha)$.  Therefore,
	\[
	e^{-m_{\alpha,1}\alpha}(a)g=(k'+1)f_{\lambda},
	\]
	where $k'\in\UU^+\k'$. 
    If we apply $D\in\AA_{(\g,\k)}$ to both sides, and use that $D$ is $\k'$-invariant by definition, we obtain that
	\[
	e^{-m_{\alpha,1}\alpha}(a)HC_{r_{\alpha}\Sigma}(D)(\lambda-m_{\alpha,1}\alpha)=HC_{\Sigma}(D)(\lambda).
	\]
	By definition $e^{\alpha}(a)=\pm\sqrt{-1}$; since $m_{\alpha,1}=2n_{\alpha}$, we obtain our desired formula:
	\[
	(-1)^{n_{\alpha}}HC_{r_{\alpha}\Sigma}(D)(t_{-m_{\alpha,1}\alpha}(\lambda))=HC_{\Sigma}(D)(\lambda).
	\]

\end{proof}

\begin{cor}\label{cor invce for HC ghost}
    $HC_{\Sigma}\left(\AA_{(\g,\k)}\right)\sub M_{\a,\Sigma}$.
\end{cor}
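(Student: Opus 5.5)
The plan mirrors the proof of Corollary \ref{cor invce for HC}. By Lemma \ref{lemma local desc M_a} it suffices to show
\[
HC_{\Sigma}(\AA_{(\g,\k)})\sub\bigcap\limits_{\beta\in\Delta_{\Sigma,ad}}M_{\a,\Sigma,\beta}.
\]
For each simple root $\beta\in\Sigma$ this containment is Corollary \ref{cor ghost rank one simple inclusion}.

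Now let $\beta\in\Delta_{\Sigma,ad}$ be arbitrary. By Definition \ref{defn admissible roots} there is a chain of singular reflections $\Sigma'=r_{\alpha_k}\cdots r_{\alpha_1}\Sigma$, with each $\alpha_i$ a singular simple root of $r_{\alpha_{i-1}}\cdots r_{\alpha_1}\Sigma$, such that $\beta\in\Sigma'$. Corollary \ref{cor ghost rank one simple inclusion} applied to the base $\Sigma'$ gives $HC_{\Sigma'}(\AA_{(\g,\k)})\sub M_{\a,\Sigma',\beta}$. Iterating (2) of Proposition \ref{prop ghost borel change} along the chain, we get for every $D$
\[
HC_{\Sigma}(D)=\pm\, t^*_{m_{\alpha_1,1}\alpha_1}\cdots t^*_{m_{\alpha_k,1}\alpha_k}\bigl(HC_{\Sigma'}(D)\bigr).
\]
The sign $(-1)^{\sum n_{\alpha_i}}$ does not matter, because each $M_{\a,\Sigma'',\beta}$ is a linear subspace: the twisted condition $f(\lambda+r\alpha)=(-1)^rf(\lambda-r\alpha)$ is linear, and so is divisibility by $T_{\alpha}$ together with $(r_\alpha)_{\smallbullet}$-invariance of the quotient. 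Applying the second half of Lemma \ref{lemma trans local invce condition} $k$ times then transports $M_{\a,\Sigma',\beta}$ back to $M_{\a,\Sigma,\beta}$. This gives $HC_{\Sigma}(\AA_{(\g,\k)})\sub M_{\a,\Sigma,\beta}$.

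The step that needs the most care is checking that Lemma \ref{lemma trans local invce condition} really covers every type of $\beta$ that occurs.
\begin{itemize}
\item \emph{Regular $\beta$.} The translation identity for $T_{\beta,\Sigma}$ is Lemma \ref{lemma Talpha under transl}, which uses the pure Weyl vector shift from Lemma \ref{lemma pure rho}. Compatibility of the dot actions is Lemma \ref{lemma invce under translation}.
\item \emph{Singular $\beta$.} We need the sign-twisted analogue of the computation written out for $J$. There the hyperplane $(\lambda+\rho_{r_\alpha\Sigma},\beta)=0$ is carried to $(\mu+\rho_\Sigma,\beta)=0$ via Lemma \ref{lemma rho refl}, and the factor $(-1)^r$ passes through unchanged.
\end{itemize}
With these checks in place, the intersection over admissible roots gives the corollary.
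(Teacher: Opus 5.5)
Your proposal is correct and matches the paper's proof, which it states as verbatim the argument of Corollary \ref{cor invce for HC}: reduce to admissible roots via Lemma \ref{lemma local desc M_a}, handle simple roots by Corollary \ref{cor ghost rank one simple inclusion}, and transport along chains of singular reflections using Proposition \ref{prop ghost borel change} and Lemma \ref{lemma trans local invce condition}. Your observation that the sign $(-1)^{n_\alpha}$ is harmless because each $M_{\a,\Sigma,\beta}$ is a linear subspace is a detail the paper leaves implicit.
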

\begin{proof}

    The proof is verbatim to Corollary \ref{cor invce for HC}, where Lemma \ref{lemma local desc M_a} is used along with Corollary \ref{prop ghost borel change}, Corollary \ref{cor ghost rank one simple inclusion} and Lemma \ref{lemma trans local invce condition}.
\end{proof}

\subsection{Bijectivity for the ghost case}
We show that the Harish-Chandra map is surjective by showing that  $M_\a$ and $HC(\AA_{(\g,\k)})$ have the same associated graded algebra.

Let
$$N_\a:=S(\a)^{W}\prod\limits_{\alpha\in\Delta_{odd}^+}h_{\alpha}^{n_{\alpha}}.$$
This space is an $I_\a$-module and the product of two elements in $N_\a$ belongs to $I_\a$.
Note that $N_\a$ does not depend on the choice of $\Sigma$. 
\begin{prop}
One has
$\operatorname{gr}M_{\a}=N_\a$. 
In particular, $HC: \AA_{(\g,\k)} \rightarrow M_\a$ is surjective.
\end{prop}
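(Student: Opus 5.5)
The surjectivity claim will follow from the first statement exactly as in the proof for invariant distributions. By Corollary \ref{cor invce for HC ghost} we have $HC(\AA_{(\g,\k)})\sub M_{\a}$, and by (\ref{eq:graded dim}) we have $\operatorname{gr}HC(\AA_{(\g,\k)})=N_{\a}$. So once we know $\operatorname{gr}M_{\a}\sub N_{\a}$, we get
\[
N_{\a}=\operatorname{gr}HC(\AA_{(\g,\k)})\sub\operatorname{gr}M_{\a}\sub N_{\a}.
\]
Hence $\operatorname{gr}$ of the inclusion $HC(\AA_{(\g,\k)})\hookrightarrow M_{\a}$ is bijective, and Lemma \ref{lemma assoc gr reflects} gives equality. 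The whole task therefore reduces to showing that the leading term $\ol f$ of any $f\in M_{\a}$ lies in $N_{\a}$.

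Write $f=gT_{\Sigma}$ with $g\in S(\a)^{W_{\smallbullet}}$, so that $\ol f=\ol g\,\ol{T_\Sigma}$ with $\ol g\in S(\a)^W$. The leading term of $T_\Sigma$ is
\[
\prod_{\alpha\in\Delta^+_{reg}}h_\alpha^{n_\alpha}\prod_{\alpha\in\Delta^+_{iso}}h_\alpha ,
\]
and every isotropic root has $n_\alpha=1$ by Lemma \ref{lemma classification rank one}. What remains is a factor $h_\alpha^{n_\alpha}$ for each odd root that is singular and non-isotropic. It also remains to check that the product $\ol f$ is $W$-invariant up to the correct sign, i.e. that $\ol f/\prod_{\Delta^+_{odd}}h_\alpha^{n_\alpha}$ is genuinely $W$-invariant.

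For the divisibility I would mimic the graded argument in the proof of $\operatorname{gr}J_\a=I_\a$. Fix a singular non-isotropic $\alpha$ for which the condition in Definition \ref{def: M_a} applies. Decompose $t_{\rho}$-shifted $f$ as $\sum_i k_i h_\alpha^i$ with $k_i\in S(\ker\alpha)$. The conditions $f(\lambda+r\alpha)=(-1)^rf(\lambda-r\alpha)$ for $1\le r\le n_\alpha$ on the hyperplane say that the polynomial $p(x)=\sum_i k_i x^i$ in $x=(\alpha,\alpha)r$ satisfies $p(r)=(-1)^rp(-r)$ for $1\le r\le n_\alpha$.

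Splitting $p$ into even and odd parts, this constrains the even part at odd $r$ and the odd part at even $r$. Together these are $n_\alpha$ linear vanishing conditions, of the form
\[
p_{ev}(r)=0 \text{ for odd } r\le n_\alpha,\qquad p_{odd}(r)=0 \text{ for even } r\le n_\alpha .
\]
Hence $p_{ev}$ is divisible by $\prod_{r \text{ odd}}(x^2-r^2)$ and $p_{odd}$ by $\prod_{r \text{ even}}(x^2-r^2)$. In top degree, the even part then contributes at least $h_\alpha^{2\lceil n_\alpha/2\rceil}$ and the odd part at least $h_\alpha^{2\lfloor n_\alpha/2\rfloor+1}$. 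Both exponents are $\ge n_\alpha$, so $h_\alpha^{n_\alpha}\mid\ol f$. Taking the top-degree part commutes with this decomposition, as in the earlier proof.

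Since only admissible roots in $W'(\Delta_{\Sigma,ad})$ are controlled, I would then use Lemma \ref{lemma conj props}(3) together with the $W$-(anti)invariance of $\ol f$ to propagate divisibility to all singular roots up to sign. The factors $h_\alpha$ for distinct positive roots are pairwise coprime, so their product divides $\ol f$.

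The main obstacle I expect is this propagation and sign bookkeeping. $W$ maps $\Delta_{sing}$ to itself up to sign, and $\ol g$ is $W$-invariant. I will therefore show that $\prod_{\Delta^+_{odd}}h_\alpha^{n_\alpha}$ transforms under each simple reflection in $\Pi$ by the same character as $\ol{T_\Sigma}\cdot\prod_{\text{sing non-iso}}h_\alpha^{n_\alpha}$. If needed, this can be settled by checking rank-one (or rank-two) reflections, using Lemma \ref{lemma invce T} for the behaviour of $T_\Sigma$. This yields $\ol f\in N_\a$.
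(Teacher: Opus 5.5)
Your route is the paper's. You use the same reduction to $\operatorname{gr}M_{\a}\sub N_{\a}$ via (\ref{eq:graded dim}), Corollary~\ref{cor invce for HC ghost} and Lemma~\ref{lemma assoc gr reflects}, and the same factorization $\ol f=\ol g\,\overline{T_\Sigma}$. The even/odd vanishing argument for admissible singular non-isotropic $\alpha$ after the $\rho$-shift is also the paper's; the sample points are really $h_\alpha=\pm r(\alpha,\alpha)$, which changes nothing. The extension to all singular roots via Lemma~\ref{lemma conj props}(3) matches too, and it is fine because $\ol f$ is a $W$-semi-invariant.

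Where your proposal does not close is the last step. Write $P=\prod_{\alpha\in\Delta^+_{odd}}h_\alpha^{n_\alpha}$, and let $Q$ be the product of $h_\alpha^{n_\alpha}$ over positive singular non-isotropic $\alpha$. Then $P=\overline{T_\Sigma}\,Q$ identically, so the comparison of characters you propose is a tautology. Also, Lemma~\ref{lemma invce T} controls only $\overline{T_\Sigma}$, not $Q$.

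What you actually need is that $\ol f/P=\ol g/Q$ lies in $S(\a)^W$, i.e.\ that $Q$ is $W$-invariant. This is a global fact, not a rank-one one: for $\gamma\in\Pi\setminus\Sigma$, the singular roots flipped by $r_\gamma$ are not seen by $\g(\gamma)$. It is nevertheless easy. Fix $\gamma\in\Pi$ and consider the positive singular non-isotropic roots $\alpha$ with $r_\gamma\alpha\in-\Delta^+$. They are permuted by the involution $\alpha\mapsto-r_\gamma\alpha$, which preserves $n_\alpha$. It has no fixed points, since a fixed point would be proportional to the regular root $\gamma$. So the sign changes come in pairs and $r_\gamma Q=Q$.

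Alternatively: $P\in N_\a=\operatorname{gr}HC(\AA_{(\g,\k)})\sub\operatorname{gr}M_\a$, so $P=\ol{g_0}\,\overline{T_\Sigma}$ with $\ol{g_0}\in S(\a)^W$, which forces $Q=\ol{g_0}$.

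Your suspicion that signs need care is justified. The paper's text asserts that $\overline{T_\Sigma}$ and $P$ are each $W$-invariant. That is not literally right when some regular odd $\gamma$ has $n_\gamma$ odd. An example is the rank-one pair $(\g\l(m|n),\g\l(m-1|n)\times\g\l(1))$ with $n$ odd, where both equal $h_\alpha^n$ and are anti-invariant under $r_\alpha$. The correct statement is that both are semi-invariants with the same character, sending $r_\gamma$ to $(-1)^{n_\gamma}$ for regular odd $\gamma\in\Pi$. That is exactly what the invariance of $Q$ provides.
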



\color{black}

\begin{proof}
By (\ref{eq:graded dim}), we have $\gr HC(\AA_{(\g,\k)})=N_\a$ and by Corollary \ref{cor invce for HC ghost}, it follows that $HC(\AA_{(\g,\k)})\subseteq M_\a$. 
We are left to show that $\gr M_\a\subseteq N_\a$.

For $f\in S(\a)$, we denote by $\bar f$ the leading term of $f$. 
Let $f\in M_\a$. 
We prove that $\bar f$ is $W$-invariant and  divisible by $\prod\limits_{\alpha\in\Delta_{odd}^+}h_{\alpha}^{n_{\alpha}}$, which itself is $W$-invariant.
By definition of $M_\a$, we can write $f=T_\Sigma\cdot g$ where  $g\in S(\a)$ is $W_{\smallbullet}$-invariant. 
Hence $\bar f=\overline{T_\Sigma}\cdot \bar g$ where  both $\overline{T_\Sigma}$ and $ \bar g$ are  $W$-invariant. Indeed, $$\overline{T_\Sigma}=\prod\limits_{\alpha \in\Delta_{odd}\setminus\Delta^+_{sing, noniso}}h_{\alpha}^{n_{\alpha}}$$
which is $W$-invariant. 
We are left to show divisibility by $h_\alpha^{n_{\alpha}}$ when $\alpha$ is singular non-isotropic. 
By \cref{lemma conj props}(iii),  it is enough to show it for $\alpha$ admissible.



Note that $f_\rho(\lambda):=f(\lambda-\rho)$ has the same leading term as $f(\lambda)$ and 
\[
f(\lambda+s\alpha)=(-1)^{s}f(\lambda-s\alpha),\quad1\le s\le n_\alpha
\]
for all $\lambda\in\left\{ \lambda\in\mathfrak{a}^{*}\mid\left(\lambda,\alpha\right)=0\right\} $.
We claim that  $\bar{f}_\rho$ is divisible by $h_\alpha^{n_\alpha}$.

Let $m=\operatorname{deg}(f)$, and write $f_\rho=\sum_{i}k_{m-i}h_{\alpha}^{i}$ where $k_{i}\in\C\left[\ker\alpha\right]$
and $\deg k_{j}\le j$. 
By assumption, for each $s=1,\ldots,n_\alpha$, we have
\begin{align*}
\sum_{i=0}^m k_{m-i}s^{i} & =(-1)^{s}\sum_{i=0}^m k_{m-i}\left(-s\right)^{i}
\end{align*}
 This is equivalent to 
\[
\sum_{i=1,3,\ldots\le m}k_{m-i}s^{i}=0\text{ for even }s,
\]

\[
\sum_{i=0,2,4,\ldots\le m}k_{m-i}s^{i}=0\text{ for odd }s.
\]
The first identity shows that $f_{\text{odd}}:=\sum_{i\text{ odd}}k_{m-i}h_{\alpha}^{i}$ vanishes at $h_\alpha=0,\pm 2,\pm4,\ldots,\pm(2\left\lfloor \frac{n_\alpha}{2}\right\rfloor)$ and so takes the form $$f_{\text{odd}}=
K_{\text{odd}}
h_\alpha(h_\alpha^2-2)(h_\alpha^2-4)\cdots(h_\alpha^2-2\left\lfloor \frac{n_\alpha}{2}\right\rfloor)$$
for some $K_{\text{odd}}$ in $S(\ker\alpha)$.
The second identity shows that $f_{\text{even}}:=\sum_{i\text{ even}}k_{m-i}h_{\alpha}^{i}$ vanishes at $h_\alpha=\pm 1,\pm3,\ldots,\pm(2\left\lceil \frac{n_\alpha}{2}\right\rceil-1)$, and so takes the form 
$$f_{\text{even}}
=K_{\text{even}}
(h_\alpha^2-1)(h_\alpha^2-3)\cdots (h_\alpha^2-(2\left\lceil \frac{n_\alpha}{2}\right\rceil-1))$$
for some $K_{\text{even}}$ in $S(\ker\alpha)$. Thus, the leading term of $f_\rho=f_{\text{odd}}+f_{\text{even}}$ must be divisible by $h_\alpha^{n_\alpha}$, completing the proof.
\end{proof}

\section{Appendix}

The following proposition is independent of the rest of the paper, but is given for the general understanding of invariant distributions. 
\begin{prop}
    Suppose $\s\mathfrak t\r(\operatorname{ad}x)=0$ for any $x\in\k$. Then $(\UU\g)^\k\cap\k\UU\g=(\UU\g)^\k\cap\UU\g\,\k$.
\end{prop}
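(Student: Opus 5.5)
The plan is to use the principal anti-automorphism of $\UU\g$ to swap left and right ideals, and to exploit the trace-zero hypothesis to relate it to the adjoint action. Let $\tau:\UU\g\to\UU\g$ be the principal anti-automorphism, $\tau(x)=-x$ for $x\in\g$, extended by $\tau(uv)=(-1)^{\ol{u}\,\ol{v}}\tau(v)\tau(u)$. Then $\tau(\k\UU\g)=\UU\g\,\k$ and $\tau$ commutes with $\ad\k$, so $\tau$ preserves $(\UU\g)^\k$ and exchanges the two intersections. It therefore suffices to show that every $u\in(\UU\g)^\k\cap\UU\g\,\k$ satisfies $u-\tau(u)\in\UU\g\,\k$, or some similar symmetry statement, but this is not automatic. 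Instead I would follow the classical argument (as in \cite[Prop 9.1.10]{D}) and compare each intersection with the kernel of a projection.

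\textbf{Step 1.} Choose a $\k$-stable complement. Since $\theta$ gives $\g=\k\oplus\p$ with $\p$ $\k$-stable, symmetrization gives $\k$-equivariant decompositions
\[
\UU\g=\lambda(S(\p))\oplus\UU\g\,\k,\qquad \UU\g=\lambda(S(\p))\oplus\k\,\UU\g,
\]
using the analogue of the lemma in \cref{sec: Chevalley map and injectivity} with left and right interchanged. Let $P_R,P_L:\UU\g\to\lambda(S(\p))$ be the corresponding projections. Both are $\k$-equivariant.

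\textbf{Step 2.} Show that for $u\in(\UU\g)^\k$ we have $P_R(u)=P_L(u)$, which gives the desired equality of kernels. Write $u=\sum$ of PBW terms in a basis $p_i$ of $\p$ and $k_j$ of $\k$. For $k\in\k$ and $v\in\UU\g$ we have $kv=(-1)^{\ol{k}\,\ol{v}}vk+[k,v]$. Hence $\k\UU\g\subseteq\UU\g\,\k+\ad(\k)(\UU\g)$, and symmetrically. Thus $u\in\k\UU\g$ implies $u\in\UU\g\,\k+\ad(\k)\UU\g$. Now restrict to $\k$-invariants: we need $(\UU\g)^\k\cap(\UU\g\,\k+\ad(\k)\UU\g)\subseteq\UU\g\,\k$. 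Projecting by $P_R$, this reduces to showing $\lambda(S(\p))^\k\cap\ad(\k)\lambda(S(\p))=0$, i.e.\ that the $\k$-invariants of $S(\p)$ meet $\k\cdot S(\p)$ trivially.

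\textbf{Step 3, the main obstacle.} For Lie algebras this uses complete reducibility, which fails for superalgebras. Instead I would use a $\k$-invariant functional. The hypothesis $\s\mathfrak t\r(\ad x)=0$ on $\k$ makes the Berezinian-type integral, or the relevant one-dimensional module, trivial. The idea is to construct a $\k$-equivariant pairing showing that an invariant lying in $\k\cdot S(\p)$ must vanish. However, $\lambda(S(\p))^\k\cap\ad(\k)\lambda(S(\p))$ need not vanish in the super case, since $S(\p)$ can be non-semisimple with invariants in the image. So rather than aim for zero, I would aim directly for the precise identity $P_R(u)=P_L(u)$ for invariant $u$.

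To get this, compute $P_L(u)-P_R(u)$ for $u=vk$ using the commutation above. Moving $k$ across $v$ produces $\ad(k)v$ terms and, after symmetrizing, a correction given by $\s\mathfrak t\r(\ad k|_{\p})$ times lower terms. Since $\ad k$ preserves $\k$ and $\p$, and $\s\mathfrak t\r\ad k|_\k$ vanishes on $[\k,\k]$, the hypothesis kills this supertrace term. I would carry this out by induction on the filtration degree: assume $P_L=P_R$ on invariants of degree $<d$, and use $\k$-equivariance of both projections together with the fact that $\ad(\k)$-images of degree-$d$ elements are handled by the inductive hypothesis. The delicate point is controlling the supertrace correction in the symmetrization, and this is where I expect the real work to lie.
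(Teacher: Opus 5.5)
There is a genuine gap: the proposal never gets past restating the claim. Your Step 2 target, $P_R(u)=P_L(u)$ for $u\in(\UU\g)^\k$, is equivalent to the proposition itself. The one concrete reduction you offer, to $\lambda(S(\p))^\k\cap\ad(\k)\lambda(S(\p))=0$, never uses the supertrace hypothesis, and as you observe it is false in the super setting.

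Step 3 then defers everything to an unspecified induction in which a ``supertrace correction from symmetrization'' is supposed to cancel. It also aims at the wrong quantity. That $\s\mathfrak t\r(\ad k|_{\k})$ vanishes on $[\k,\k]$ holds for any Lie superalgebra, so it cannot be where the hypothesis enters. The trace over $\p$ is beside the point, as the following example shows. Take $\k=\langle h,e\rangle$ with $[h,e]=e$, and $\g=\k\ltimes\k^*$ with $\k^*$ abelian and dual basis $h^*,e^*$, so that $[h,h^*]=0$ and $[e,e^*]=h^*$. Then $\s\mathfrak t\r(\ad_{\g}x)=0$ for all $x\in\k$. Yet $u=hh^*+ee^*$ is $\k$-invariant, lies in $\k\UU\g$, and equals $h^*h+e^*e+h^*\notin\UU\g\,\k$. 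What must vanish is the supertrace of $\ad x$ on $\k$ itself, which is exactly what the paper's computation produces.

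The missing idea is to write $u$ as the contraction of a $\k$-invariant tensor. The paper views an invariant of $\k S(\p)\cong\k\otimes S(\p)$ as an equivariant map $\phi\in\Hom_\k(\k^*,S(\p))$. It then writes $u=\sum_i e_i\varphi_i$, where $\varphi_i=\phi(e_i^*)$ for dual bases, and uses
\[
u=\sum_i e_i\varphi_i=\sum_i(-1)^{p(e_i)}\varphi_i e_i+\sum_i\ad(e_i)(\varphi_i).
\]
By equivariance, $\sum_i\ad(e_i)(\varphi_i)$ is $\phi$ applied to the contraction $\sum_i e_i\cdot e_i^*\in\k^*$. Up to sign, this contraction is the character $x\mapsto\s\mathfrak t\r(\ad x|_{\k})$. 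So it vanishes, and $u\in\UU\g\,\k$, with no induction or symmetrization bookkeeping needed.

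When you write this up, obtain the invariant tensor directly rather than from a splitting $\k\UU\g=\k\UU\g\,\k\oplus\k S(\p)$. Such a splitting cannot be direct when $[\k,\k]\neq0$, because $[\k,\k]$ lies in both pieces. Instead, use the PBW isomorphism $\UU\g\cong\UU\k\otimes S(\p)$, which gives $\k\UU\g\cong\UU^+\k\otimes S(\p)$. Transport $\frac1n$ times the polarization map $S^n(\k)\to\k\otimes S^{n-1}(\k)$ through symmetrization $S(\k)\cong\UU\k$. This gives a $\k$-equivariant section of multiplication $\k\otimes\UU\k\to\UU^+\k$. Hence every invariant of $\k\UU\g$ lifts to $(\k\otimes\UU\g)^\k$, and the argument above goes through verbatim with $\varphi_i\in\UU\g$.
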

\begin{proof}
    Consider again the symmetrization map $\lambda:S(\p)\to\UU\g$ which is $\k$-equivariant under the adjoint action. 
    Then we have the following decomposition of $\operatorname{ad}\k$-modules:
    $$\k\UU\g=\k\UU\g\,\k\oplus\k S(\p),\quad \UU\g\k=\k\UU\g\,\k\oplus S(\p)\k.$$
    It suffices to show that 
    $\left(\k S(\p)\right)^\k=\left(S(\p)\k\right)^\k.$
    Let $u\in\left(\k S(\p)\right)^\k$.
    One has 
    $$\left(\k S(\p)\right)^\k
    \cong \left(\k\otimes S(\p)\right)^\k
    \cong \Hom_\k(\mathbb C,\k\otimes S(\p))
    \cong \Hom_\k(\k^*,S(\p)).$$ 

This implies that any $u$ can be written in the form $\sum_i e_i\varphi_i$ where $e_1,\dots,e_n$ is a basis of the ideal $\mathfrak l:=(\ker u)^\perp \subset \k$ (if $u$ is viewed as a homomorphism, all the elements in its kernel vanish on $\mathfrak l$), and $\varphi_1,\ldots,\varphi_n$ is a dual basis of $\mathfrak l^*\subset S(\p)$. Here the duality is taken with respect to $\langle \varphi_i,e_j\rangle:=(u^{-1}(\varphi_i))(e_j)$.
    Note that $e_i$ and $\varphi_i$ have the same parity.
    
    We claim that $\sum_{i=1}^n(\operatorname{ad}^* e_i)(\varphi_i)=0$. 
    Indeed, for any $j=1,\ldots,n$, if $p(e_j)=0$, we have
    \begin{align*}
        \left\langle \sum_{i=1}^n(\operatorname{ad}^* e_i)(\varphi_i),e_j\right\rangle
        &=-\sum_{i=1}^n(-1)^{p(e_i)}\left\langle \varphi_i,(\operatorname{ad} e_i)e_j\right\rangle\\&=
        \sum_{i=1}^n(-1)^{p(e_i)}\left\langle \varphi_i,(\operatorname{ad} e_j)e_i\right\rangle\\&
        =\s\mathfrak t\r\operatorname{ad} e_j=0.
    \end{align*}
    For $e_j$ odd, $ \left\langle (\operatorname{ad}^* e_i)(\varphi_i),e_j\right\rangle=0$ for each $i=1,\ldots,n$ since $(\operatorname{ad}^* e_i)(\varphi_i)$ is an even element.
    Therefore, 
       $$u= \sum_i e_i\varphi_i=\sum_i \operatorname{ad}  e_i(\varphi_i)+(-1)^{p(e_i)}\varphi_i e_i=\sum_i (-1)^{p(e_i)}\varphi_i e_i$$
    and we get that $u\in\left(S(\p)\k \right)^\k$, as required.
\end{proof}

	\bibliographystyle{amsalpha}

    \textsc{\footnotesize S.R.: Bar-Ilan University, Ramat Gan, Israel
 }
 
\textit{\footnotesize Email address:}  \texttt{\footnotesize shifra.reif@biu.ac.il}

    \textsc{\footnotesize S.S.: Department of Mathematics, Rutgers University,paper 110 Frelinghuysen Rd, Piscataway, NJ 08854-8019, USA }
    
\textit{\footnotesize Email address:}  \texttt{\footnotesize sahi@math.rutgers.edu}

    \textsc{\footnotesize V.S.:  Deptartment of Mathematics, University of  California at Berkeley, Berkeley, CA 94720 }
    
    \textit{\footnotesize Email address:} \texttt{\footnotesize serganov@math.berkeley.edu}

	\textsc{\footnotesize A.S.: School of Mathematics and Statistics, University of Sydney, NSW 2006, Australia;} 	\textsc{\footnotesize School of Mathematics and Statistics, UNSW Sydney, NSW 2052, Australia}
	
	\textit{\footnotesize Email address:} \texttt{\footnotesize xandersherm@gmail.com}
    
\end{document}